\theoremstyle{plain}
\newtheorem{theorem}{Theorem}
\newtheorem{lemma}[theorem]{Lemma}
\newtheorem{corollary}[theorem]{Corollary}
\newtheorem{problem}[theorem]{Problem}
\theoremstyle{definition}
\newtheorem{example}{Example}
\def\cref#1{Conjecture~$\ref{#1}$}
\def\Cref#1{Corollary~$\ref{#1}$}
\renewcommand{\geq}{\geqslant}
\renewcommand{\leq}{\leqslant}
\def\={\equiv}
\def\dfrac#1#2{\lower0.15ex\hbox{\large$\frac{#1}{#2}$}} 
\title{Mutually orthogonal frequency rectangles}
\author{Fahim Rahim\thanks{Department of Mathematics, The University of Waikato, Private Bag 3105, Hamilton 3240, New Zealand.} \thanks{Women Unviversity of AJK, Bagh, Azad Kashmir, Pakistan} \and Nicholas J. Cavenagh \footnotemark[1]}
\begin{document}

\maketitle

\begin{abstract}
A \emph{frequency rectangle} of type FR$(m,n;q)$ is an $m \times n$ matrix such that each symbol 
from a set of size $q$ appears $n/q$ times in each row and $m/q$ times in each column. 
Two frequency rectangles of the same type are said to be orthogonal if, upon superimposition, each possible ordered pair of symbols appear the same number of times. A set of $k$ frequency rectangles in which every pair is orthogonal is called a set of \emph{mutually orthogonal frequency rectangles}, denoted by $k$--MOFR$(m,n;q)$.
We show that a $k$--MOFR$(2,2n;2)$ and an orthogonal array OA$(2n,k,2,2)$ are equivalent. We also show that an OA$(mn,k,2,2)$ implies the existence of a $k$--MOFR$(2m,2n;2)$. We construct $(4a-2)$--MOFR$(4,2a;2)$ assuming the existence of  a Hadamard matrix of order $4a$.

A $k$--MOFR$(m,n;q)$ is said to be $t$--\emph{orthogonal}, if each subset of size $t$, when superimposed, contains each of the $q^t$ possible ordered $t$-tuples of entries exactly $mn/q^t$ times. A set of vectors over a finite field $\mathbb{F}_q$ is said to be \emph{$t$-independent} if each subset of size $t$ is linearly independent. We describe a method to obtain a set of $t$--orthogonal $k$--MOFR$(q^M, q^N, q)$ corresponding to a set of $t$--independent vectors in $(\mathbb{F}_q)^{M+N}$. We also discuss upper and lower bounds on the set of $t$--independent vectors and give a table of values for binary vectors of length $N \leq 16$.

A frequency rectangle of type FR$(n,n;q)$ is called a frequency square and a set of $k$ mutually orthogonal frequency squares is denoted by $k$--MOFS$(n;q)$ or $k$--MOFS$(n)$ when there is no ambiguity about the symbol set. For $p$ an odd prime, we show that there exists a set of $(p-1)$ binary MOFS$(2p)$, hence improving the lower bounds in (Britz et al. 2020) for the previously known values for $p \geq 19 $.
\end{abstract}

\noindent {\bf MSC 2010 Codes: 05B15}

\noindent {Keywords: Frequency square; frequency rectangle or F-rectangle; MOFR; MOFS; Hadamard matrix; Orthogonal array. }  

\section{Introduction}

A \emph{frequency rectangle} (also called \emph{F-rectangle}) of type FR$(m,n;q)$ is an $m \times n$ array based on a symbol set $S$ of size $q$, such that each element of $S$ appears exactly $n/q$ times in each row and $m/q$ times in each column. Two frequency rectangles, $F_1$ and $F_2$, of the same type, are said to be \emph{orthogonal} if each possible ordered pair of symbols appear the same number of times when $F_1$ and $F_2$ are superimposed. A set of $k$ frequency rectangles in which every pair is orthogonal is called a set of \emph{mutually orthogonal frequency rectangles}, denoted by $k$--MOFR$(m,n;q)$.

A \emph{frequency square} of type F$(n;q)$ is a frequency rectangle of type FR$(n,n;q)$. In the literature, a frequency square of type F$(n;q)$ is usually denoted by F$(n; \lambda)$, where $\lambda = n/q$ is the frequency of each symbol in each row or each column. However, we stick to the notation F$(n;q)$ where $q$ is the size of the symbol set to remain consistent with the rest of the notations used. The definition of orthogonality between two frequency squares is analogous to frequency rectangles. A set of frequency squares in which each pair is orthogonal is called a set of \emph{mutually orthogonal frequency squares} or MOFS, denoted by $k$--MOFS$(n;q)$ or simply by $k$--MOFS$(n)$ when there is no ambiguity about the symbol set. 

In the theory of frequency squares, most of the work has been dedicated to constructing the largest possible sets of MOFS. The upper bound, $k \leq (n-1)^2/(q-1)$, for a $k$--MOFS$(n;q)$ was first determined by Hedayat, Raghavarao, and Seiden \cite{hedayat1975further}.
The following theorem is a particular case of a more general result proved in \cite{Mandeli1984ontheconstruction}. However, we have included the proof here for thoroughness. The proof is similar  to the one given in \cite{hedayat1975further} for frequency squares.

\begin{theorem} \textup{\cite{Mandeli1984ontheconstruction}} \label{thm:upper_bound_rectangles}
	If a $k$--\textup{MOFR}$(m,n;q)$ exists, then:  
	\begin{gather}
	k \leq \frac{(m-1)(n-1)}{(q-1)}.
	\label{eq1}
	\end{gather}
\end{theorem}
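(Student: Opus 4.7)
The plan is to use a rank/linear algebra argument in $\mathbb{R}^{m \times n}$, analogous to the one used in Hedayat--Raghavarao--Seiden for frequency squares.

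For each frequency rectangle $F_i$ in the set and each symbol $s$ from the size-$q$ symbol set $S$, I would introduce the indicator matrix $M_i^{(s)} \in \{0,1\}^{m \times n}$ whose $(r,c)$ entry is $1$ iff $F_i[r,c] = s$, and then pass to its centered version $N_i^{(s)} = M_i^{(s)} - \tfrac{1}{q} J_{m,n}$, where $J_{m,n}$ is the all-ones matrix. The frequency conditions (each symbol appears $n/q$ times per row and $m/q$ times per column) translate directly into: every row sum and every column sum of $N_i^{(s)}$ is zero. Hence each $N_i^{(s)}$ lies in the real vector space $V \subseteq \mathbb{R}^{m \times n}$ of matrices with zero row and column sums, which has $\dim V = (m-1)(n-1)$.

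Next I would study the subspace $W_i = \mathrm{span}\{N_i^{(s)} : s \in S\} \subseteq V$ attached to the rectangle $F_i$. The identity $\sum_{s \in S} M_i^{(s)} = J_{m,n}$ immediately gives $\sum_{s} N_i^{(s)} = 0$, so $\dim W_i \leq q-1$. For the reverse inequality, suppose $\sum_{i<q} c_i N_i^{(s_i)} = 0$; comparing cells where $s_q$ appears (which is guaranteed since $m/q$ and $n/q$ are positive integers) forces $\sum c_i = 0$, and then comparing cells where each $s_i$ appears forces every $c_i = 0$. Thus $\dim W_i = q-1$.

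I would then prove that $W_i \perp W_j$ in $V$ (with the Frobenius inner product) whenever $i \ne j$. Expanding
\[
\langle N_i^{(s)}, N_j^{(t)} \rangle
= \sum_{r,c} \bigl(1_{F_i[r,c]=s} - \tfrac1q\bigr)\bigl(1_{F_j[r,c]=t} - \tfrac1q\bigr)
\]
and using the orthogonality hypothesis (the ordered pair $(s,t)$ occurs exactly $mn/q^2$ times on superimposition) together with the frequency conditions, every term collapses so the inner product is $0$. The $k$ subspaces $W_1, \ldots, W_k$ are therefore mutually orthogonal inside $V$, and summing dimensions gives
\[
k(q-1) \;=\; \sum_{i=1}^k \dim W_i \;\leq\; \dim V \;=\; (m-1)(n-1),
\]
which is the desired bound.

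The only non-routine step is the dimension computation $\dim W_i = q-1$ and the cancellation in the inner product $\langle N_i^{(s)}, N_j^{(t)} \rangle$; both are short and follow from the row/column frequency conditions plus the orthogonality definition, so I do not anticipate a serious obstacle. Everything else is bookkeeping in $\mathbb{R}^{m \times n}$.
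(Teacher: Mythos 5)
Your proof is correct, and while it rests on the same underlying linear algebra as the paper's argument (rank counting on symbol--indicator matrices), it is organized quite differently. The paper concatenates the $mn\times q$ indicator matrices into a single $mn\times kq$ matrix $M$, bounds $\mathrm{Rank}(M)$ above by $(m-1)(n-1)+1$ via row dependencies, and then computes $\mathrm{Rank}(M)=kq-k+1$ exactly by finding the eigenvalues of the block matrix $M^TM$ (an explicit computation relegated to an appendix). You instead center each indicator matrix, observe that the centered matrices $N_i^{(s)}$ live in the $(m-1)(n-1)$-dimensional space $V$ of zero-margin matrices, show each rectangle contributes a $(q-1)$-dimensional subspace $W_i$, and show the $W_i$ are pairwise Frobenius-orthogonal, so that $\sum_i \dim W_i \leq \dim V$. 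Your route avoids the Gram-matrix eigenvalue computation entirely: the only inputs are the two short calculations you flag ($\dim W_i = q-1$ and $\langle N_i^{(s)}, N_j^{(t)}\rangle = 0$), both of which check out --- the inner product expands to $\frac{mn}{q^2} - \frac{1}{q}\cdot\frac{mn}{q} - \frac{1}{q}\cdot\frac{mn}{q} + \frac{mn}{q^2} = 0$ using orthogonality and the fact that each symbol occurs $mn/q$ times in each rectangle, and the dimension count works because every symbol actually occurs in $F_i$. The relationship between the two proofs is that your $\bigoplus_i W_i$ together with the span of $J_{m,n}$ recovers the column space of the paper's $M$, of dimension $k(q-1)+1 = kq-k+1$; you have in effect quotiented out the all-ones direction at the start, which is what makes the eigenvalue computation unnecessary. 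The paper's version has the minor advantage of exhibiting the exact rank of $M^TM$ (useful elsewhere), while yours is shorter and self-contained.
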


\begin{proof}
	Let $ F_1, F_2, \dots , F_k $ be the elements of $k$--MOFR$(m,n;q)$. Corresponding to each $ F_\alpha $ we define an $ mn \times q $ matrix $ H_\alpha = (h_{(ij), \beta}) $, where $ i = 1,2, \dots, m; j=1,2, \dots , n $,   $ \beta $ runs over the symbols set, and 
	
$$
h_{(ij), \beta} =
	\begin{cases}
		1 \qquad \textit{if the entry in the } (i,j)^{th} \textit{ cell of } F_\alpha \textit{ is } \beta\\
		0 \qquad  otherwise.
	\end{cases}
$$ 
Observe that each column of $H_\alpha$ contains $mn/q$ 1's. Let $M$ be an $(mn) \times (kq)$ matrix defined as follows: \begin{gather*}
M = (H_1 | H_2 | \cdots |H_k).
\end{gather*} 

Since each row of $M$ corresponds to a fixed position $(i,j)$ of the set of frequency rectangles, by using the properties of frequency rectangles there are at least $(m-1) + (n-1) $ dependent rows in $M$. Therefore the rank of $M$,
\begin{gather*}
\textup{Rank}(M) \leq \min\{(m-1)(n-1) + 1, kq\}.
\end{gather*}

Observe that $ H_r^T H_s = (mn/q) I_q$ when $r = s$ and $ H_r^T H_s = (mn/q^2) J_q$ when $r \neq s$, where $ I_q $ is an identity matrix of order $ q $ and $J_q$ is a $q \times  q$ matrix of ones. 

Thus we have the following matrix of dimensions $(kq) \times (kq)$;

$$
M^TM = \begin{pmatrix}
n \lambda I_q  &  \lambda\lambda' J_q  &  \ldots  &  \lambda\lambda' J_q \\
\lambda\lambda' J_q &  n\lambda I_q   &  \ldots  &  \lambda\lambda' J_q  \\
\vdots        &    \vdots       &  \ddots  &   \vdots        \\
\lambda\lambda' J_q &  \lambda\lambda' J_q  &  \ldots  &   n\lambda I_q  \\
\end{pmatrix},
$$
where $\lambda = m/q$ and $ \lambda' = n/q$. The eigenvalues of $M^T M$ are 
$ n \lambda k, n \lambda $ and $ 0 $ with multiplicities $ 1, k(q-1) $ and $ k-1 $ respectively (see Appendix \ref{app:eigenvalues} for details). 
Since the sum of multiplicities of non-zero eigenvalues gives the rank of $ M^T M $,
\begin{gather*}
 kq-k+1 = R(M^TM)=R(M) \leq \min\{(m-1)(n-1) + 1, kq\},
\end{gather*} 
which gives the required result. 
\end{proof}

A $k$--MOFR$(m,n;q)$ or $k$--MOFS$(n;q)$ is said to be \emph{complete} if $k$ reaches the upper bound described in the above theorem. Complete sets of MOFR of type FR$(q^M, q^N;q)$ are known to exist when $q$ is a prime power \cite{federer1984pairwise}. For $q$ a prime power, Mandeli \cite{mandeli1992complete} describes a method to construct a complete set of MOFR$(q^M, 2q^N, q)$. For $m = 4a$ and $n = 4b$, Cheng \cite{cheng1980orthogonal} showed the existence of a complete set of MOFR$(m,n;2)$ provided that Hadamard matrices of order $4a$ and $4b$ exist. Also, assuming the existence of a Hadamard matrix of order $4b$, Federer, Hedayat, and Mandeli \cite{federer1984pairwise} describe a method to construct a complete set of MOFR$(2, 4b;2)$.

In Section \ref{sec:OA_FR}, we include results that further describe the relationship between Hadamard matrices, orthogonal arrays, and frequency rectangles. 
An \emph{orthogonal array} OA$(N,k,q,t)$ of strength $t$ is an $N \times k$ matrix of symbols based on a set of size $q$, such that in any $N \times t$ submatrix, each possible ordered $t$--tuple appears the same number of times as a row.
We show that an orthogonal array OA$(n,k,2,2)$ is equivalent to $k$--MOFR$(2,2n;2)$. We also show that if there exists an orthogonal array OA$(mn,k,2,2)$ then there exists $k$--MOFR$(2m,2n;2)$ and the existence of a Hadamard matrix of order $4a$ implies the existence of $(4a-2)$--MOFR$(4,2a;2)$.

 Complete sets of MOFS of type F$(q^N;q)$ are also known to exist when $q$ is a prime power \cite{mavron2000frequency, laywine2001table, li2014some, street1979generalized}. The complete set of MOFS of type F$(4a; 2)$ can be constructed by using a Hadamard matrix of order 4a \cite{federer1977existence}. However, no complete sets of MOFS for any other set of parameters are known to exist. In 2001, Laywine and Mullen \cite{laywine2001table} formulated a table of lower bounds for the maximum known values for the frequency squares of type F$(n;q)$ where $n \leq 100$. Later the table was improved by Li et al. \cite{li2014some} in 2014. Recently, in \cite{britz2020mutually}, the lower bounds in the case of $k$--MOFS$(n;q)$, where $n \equiv 2 \pmod{4}$ and $q = 2$ have been improved to $k \geq 17$ and it is also shown there that the complete sets do not exist for these parameters. In Section \ref{sec:(p-1)_binary_MOFS}, we give a method to construct a set of $(p-1)$--MOFS$(2p;2)$ where $p$ is an odd prime, thus improving the lower bounds in \cite{li2014some} and \cite{britz2020mutually} for such $p \geq 19$.

We next describe a stronger form of orthogonality for a set of frequency rectangles. A set $M$ of frequency rectangles of type FR$(m,n;q)$ is said to be $t$--\emph{orthogonal}, $t \geq 2$, if upon superimposition of any $t$ elements in $M$, each of the possible $q^t$ ordered $t$--tuples occurs the same number of times in the resulting array. By definition $k$--MOFR$(m,n;q)$ is $2$--orthogonal and any $t$--orthogonal set is also $t'$--orthogonal for any $2 \leq t' \leq t$.

Here we include an example to illustrate the definition further.

\begin{example} \label{exp:6_MOFR(4,4,2)}
Consider the set $M = \{F_1, F_2, \dots, F_6\}$, where each $F_i$ is given in Table \ref{tab:6MOFR_strength_3}. 
\begin{table}[H]
\begin{minipage}{0.16\textwidth}
\centering
\begin{tabular}{cccc}
0&	0&	1&	1\\
0&	0&	1&	1\\
1&	1&	0&	0\\
1&	1&	0&	0\\
\end{tabular}
\caption*{$F_1$}
\end{minipage}
\begin{minipage}{0.16\textwidth}
\centering
\begin{tabular}{cccc}
0&	1&	0&	1\\
0&	1&	0&	1\\
1&	0&	1&	0\\
1&	0&	1&	0\\
\end{tabular}
\caption*{$F_2$}
\end{minipage}
\begin{minipage}{0.16\textwidth}
\centering
\begin{tabular}{cccc}
0&	1&	0&	1\\
1&	0&	1&	0\\
1&	0&	1&	0\\
0&	1&	0&	1\\
\end{tabular}
\caption*{$F_3$}
\end{minipage}
\begin{minipage}{0.16\textwidth}
\centering
\begin{tabular}{cccc}
0&	1&	0&	1\\
1&	0&	1&	0\\
0&	1&	0&	1\\
1&	0&	1&	0\\
\end{tabular}
\caption*{$F_4$}
\end{minipage}
\begin{minipage}{0.16\textwidth}
\centering
\begin{tabular}{cccc}
0&	0&	1&	1\\
1&	1&	0&	0\\
1&	1&	0&	0\\
0&	0&	1&	1\\
\end{tabular}
\caption*{$F_5$}
\end{minipage}
\begin{minipage}{0.16\textwidth}
\centering
\begin{tabular}{cccc}
0&	0&	1&	1\\
1&	1&	0&	0\\
0&	0&	1&	1\\
1&	1&	0&	0\\
\end{tabular}
\caption*{$F_6$}
\end{minipage}
\caption{$3$--orthogonal $6$--MOFR$(4,4;2)$.}
\label{tab:6MOFR_strength_3}
\end{table}

Now if we superimpose any three elements of $M$, then each of the $2^3$ possible ordered 3--tuples occurs twice in the resultant array, as shown in Table \ref{tab:superimposition} for the case of $F_1, F_2, F_3$ and $F_1, F_4, F_6$. We leave it to the reader to verify that it is true for the rest of the cases. Thus $M$ is $3$--orthogonal. However, $M$ is  not $4$--orthogonal, since the sequences of odd weights do not occur when the arrays $F_1, F_2, F_3,$ and $F_5$ are superimposed (see Table \ref{tab:superimposition}).

\begin{table}[H]
\begin{minipage}{0.33\textwidth}
\centering
\begin{tabular}{cccc}
000&	011&	100&	111\\
001&	010&	101&	110\\
111&	100&	011&	000\\
110&	101&	010&	001\\
\end{tabular}
\caption*{$F_1,F_2,F_3$ }
\end{minipage}
\begin{minipage}{0.33\textwidth}
\centering
\begin{tabular}{cccc}
000&	010&	101&	111\\
011&	001&	110&	100\\
100&	110&	001&	011\\
111&	101&	010&	000\\
\end{tabular}
\caption*{$F_1, F_4, F_6$}
\end{minipage}
\begin{minipage}{0.33\textwidth}
\centering
\begin{tabular}{cccc}
0000&	0110&	1001&	1111\\
0011&	0101&	1010&	1100\\
1111&	1001&	0110&	0000\\
1100&	1010&	0101&	0011\\
\end{tabular}
\caption*{$F_1, F_2, F_3,F_5$}
\end{minipage}
\caption{}
\label{tab:superimposition}
\end{table}
\end{example}

In \cite{rahim2021row} and \cite{rahim2022row}, row-column factorial designs of strength $s$ are discussed. A row-column factorial design of strength $s$, denoted by $I_k(m,n,q,s)$, is an arrangement of $mn/q^k$ copies of the $q^k$--factorial design (that is, all the $k$-tuples over a set of size $q$) in an $m \times n$ array such that the elements in each row (column) forms an orthogonal array OA$(n,k,q,s)$ (OA$(m,k,q,s)$). By definition, the existence of an $I_k(m,n,q,s)$ (for any $s \geq 1$) implies the existence of  $k$--orthogonal $k$--MOFR$(m,n;q)$. 
 Since necessary and sufficient conditions are known for the existence of $I_k(m,n,q,1)$, we have the following theorem.

\begin{theorem} \textup{\cite{rahim2021row}}
    Let $m \leq n$. There exists $k$--orthogonal $k$--\textup{MOFR}$(m,n;q)$  if and only if:
     \begin{enumerate}
            \item[\textup{(i)}] $q \vert m$ and $q \vert n $;
            \item[\textup{(ii)}] if $k = q = m = 2$ then $n \equiv 0 \pmod{4}$; and
            \item[\textup{(iii)}] $(k,m,n,q) \neq (2,6,6,6)$.
    \end{enumerate}
\end{theorem}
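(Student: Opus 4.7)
The plan is to reduce this theorem directly to the known characterization of row-column factorial designs of strength one, $I_k(m,n,q,1)$, cited from \cite{rahim2021row}. The text already notes that existence of $I_k(m,n,q,s)$ for any $s \geq 1$ yields a $k$-orthogonal $k$-MOFR$(m,n;q)$, so what I need to establish is the converse, after which the theorem is immediate.

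First I would set up the equivalence. Given a $k$-orthogonal $k$-MOFR$(m,n;q)$ consisting of arrays $F_1, \dots, F_k$, form the $m \times n$ array $A$ whose $(i,j)$ entry is the $k$-tuple $(F_1(i,j), F_2(i,j), \dots, F_k(i,j)) \in S^k$. The $k$-orthogonality condition says that each of the $q^k$ possible $k$-tuples appears exactly $mn/q^k$ times in $A$, which is precisely the condition that $A$ is an arrangement of $mn/q^k$ copies of the $q^k$-factorial design. Moreover, because each individual $F_\alpha$ is a frequency rectangle, in each row of $A$ every symbol of $S$ appears exactly $n/q$ times in coordinate $\alpha$, so each row of $A$ forms an OA$(n,k,q,1)$; similarly each column forms an OA$(m,k,q,1)$. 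Hence $A$ is exactly an $I_k(m,n,q,1)$. Conversely, any $I_k(m,n,q,1)$ splits into $k$ coordinate arrays giving a $k$-orthogonal $k$-MOFR, so the two objects are equivalent.

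Having this equivalence, the theorem becomes a direct translation of the characterization of $I_k(m,n,q,1)$ from \cite{rahim2021row}. The divisibility conditions $q \mid m$ and $q \mid n$ are obviously forced by the frequency rectangle definition itself, and the two exceptional obstructions, namely $(k,q,m)=(2,2,2)$ with $n \not\equiv 0 \pmod 4$ and the parameter set $(2,6,6,6)$ (which corresponds to the classical nonexistence of two MOLS of order $6$), are inherited verbatim from the characterization of $I_k(m,n,q,1)$.

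The only step that requires genuine care is checking the equivalence in the converse direction; the rest is a citation. I do not anticipate a real obstacle, since the two definitions essentially say the same thing once one writes a $k$-MOFR as a single array of $k$-tuples, but I would be careful to verify that the row/column distribution of $A$ really does give strength-one orthogonal arrays rather than something weaker.
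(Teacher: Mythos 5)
Your proposal is correct and follows essentially the same route as the paper: the theorem is quoted from \cite{rahim2021row}, and the paper justifies it only by the remark that an $I_k(m,n,q,s)$ yields a $k$--orthogonal $k$--MOFR$(m,n;q)$ together with the known characterization of $I_k(m,n,q,1)$. Your explicit verification of the converse equivalence --- writing the $k$--orthogonal $k$--MOFR as a single $m\times n$ array of $k$-tuples and checking that the full-factorial count and the strength-one row and column conditions hold --- is precisely the half of the argument the paper leaves implicit, and it is needed to inherit the necessity of conditions (ii) and (iii); your check of it is correct.
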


A set of vectors over a field $\mathbb{F}_q$ is said to be $t$--\emph{independent} if each subset of size $t$ is linearly independent. In Section \ref{sec:t_orthogonal_FR}, we describe a relationship between a set of $t$--independent vectors over a finite field $\mathbb{F}_q$ and a set of    $t$--orthogonal MOFR$(q^M, q^N,q)$. We also exhibit a table that shows the maximum known values for $t$--independent vectors over $\mathbb{F}_2$ by using existence results for linear codes and some other known results in the literature. We also show that the existence of an orthogonal array OA$(2m,k,2,t)$ implies a $t$--orthogonal $k$--MOFR$(2m,2m,2)$.

\section{Orthogonal Arrays and Frequency Rectangles} \label{sec:OA_FR}

In this section, we use Hadamard matrices and orthogonal arrays to construct MOFR. A \textit{Hadamard matrix} $ H(n) $ is a square matrix of order $ n $, having entries from the set $ \{1,-1\} $ such that any two rows are orthogonal; that is it satisfies the equation:
\begin{equation*} \label{eq:hadamard_orthogonal}
H(n)H(n)^T=nI_n.
\end{equation*}

It has been conjectured that a Hadamard matrix of order $4n$ exists for each $n$ \cite{djokovic2007hadamard, hedayat1999orthogonal, horadam2012hadamard}. A Hadamard matrix with all the entries in its first column and first row equal to $1$ is called a \emph{normalized Hadamard matrix}. Any Hadamard matrix is equivalent to a normalized Hadamard matrix. A normalized Hadamard matrix has the following combinatorial properties.

\begin{lemma} \label{lem:hadamard_properties}
	Let $ {\bf{a}} = (a_1, \dots, a_n) $ and $ {\bf{b}}= (b_1, \dots, b_n) $ be two distinct rows, other than the first, of a normalized Hadamard matrix of order $ n $, $ n>2 $. Then
	\begin{enumerate}
		\item[\textup{(i)}] half of the entries $ a_i $ are $ +1's $ and half of them are $ -1's $.
		\item[\textup{(ii)}] the multiset $ \{(a_i,b_i): i=1,2,\dots, n\} $ contains each type of order pair exactly             $ n/4 $ times.
		\item[\textup{(iii)}] the conditions (i) and (ii) are also true for any two distinct  columns, other than the first, of a normalized Hadamard matrix.
	\end{enumerate}
\end{lemma}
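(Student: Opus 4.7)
The plan is to reduce each claim to a short linear-algebra calculation using the defining orthogonality $H(n)H(n)^T = nI_n$ together with the normalization assumption that the first row and first column consist entirely of $+1$'s.

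For part (i), I would fix any row $\mathbf{a} = (a_1,\dots,a_n)$ other than the first and take its inner product with the first row, which is $(1,1,\dots,1)$. Orthogonality forces $\sum_i a_i = 0$, and since each $a_i \in \{+1,-1\}$, exactly $n/2$ of them must be $+1$ and the remaining $n/2$ must be $-1$.

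For part (ii), I would let $n_{++}, n_{+-}, n_{-+}, n_{--}$ denote the number of indices $i$ for which $(a_i,b_i)$ takes each of the four possible sign patterns. Applying part (i) to $\mathbf{a}$ gives $n_{++}+n_{+-}=n/2$, applying it to $\mathbf{b}$ gives $n_{++}+n_{-+}=n/2$, the total count gives $n_{++}+n_{+-}+n_{-+}+n_{--}=n$, and the orthogonality $\mathbf{a}\cdot\mathbf{b}=0$ gives $n_{++}+n_{--}-n_{+-}-n_{-+}=0$. This is a $4\times 4$ linear system whose unique solution is $n_{++}=n_{+-}=n_{-+}=n_{--}=n/4$, which is the required conclusion.

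For part (iii), I would observe that since $H(n)$ is a square matrix and $H(n)H(n)^T=nI_n$, it is invertible with $H(n)^{-1}=H(n)^T/n$, hence $H(n)^TH(n)=nI_n$ as well. Thus the columns of $H(n)$ satisfy exactly the same orthogonality relations as the rows, and because $H(n)$ is normalized, its first column is also the all-ones vector; the arguments in (i) and (ii) therefore apply verbatim to columns. The only mild subtlety anywhere in the lemma is keeping the bookkeeping in the $4\times 4$ system of step (ii) straight, but once the four equations are written down the solution is immediate, so no step poses a real obstacle.
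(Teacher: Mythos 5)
Your proof is correct: part (i) follows from orthogonality with the all-ones first row, part (ii) from the $4\times 4$ linear system you set up (which indeed has the unique solution $n/4$ for each count), and part (iii) from $H^TH = nI_n$ together with the all-ones first column. The paper states this lemma without proof, treating it as a standard fact about normalized Hadamard matrices, and your argument is exactly the standard one that would be supplied.
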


A \emph{partial Hadamard matrix} or \emph{Hadamard rectangle} is a $k \times n$ matrix, having entries from the set $\{1, -1\}$ such that any two rows are orthogonal. Analogous to a normalized Hadamard matrix we can define a normalized Hadamard rectangle. The conditions (i) and (ii) in the Lemma \ref{lem:hadamard_properties} are also true for a normalized Hadamard rectangle, however, (iii) does not necessarily hold in the case of a rectangle. 

 An orthogonal array OA$(4a,4a-1,2,2)$ can be obtained from a normalized Hadamard matrix by removing the first column and replacing $-1$'s with 0's. The following lemma that describes the relationship between the two structures is a generalization of the Theorem 7.5 given in \cite[p.~148]{hedayat1999orthogonal} for the rectangular case. 

\begin{lemma} Let $k<2b$. There exists an \textup{OA}$(2b,k-1,2,2)$ if and only if there exists 
a $k\times 2b$ Hadamard rectangle. In particular, there exists an \textup{OA}$(2b,2b-1,2,2)$ if and only if there exists a Hadamard matrix $H(2b)$. 
\end{lemma}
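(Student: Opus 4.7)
The plan is a direct two-way construction that adapts the classical Hadamard/OA equivalence from the square case. For the forward implication I start with a $k \times 2b$ Hadamard rectangle and normalize it by negating columns so that its first row is the all-$+1$ vector. By the rectangular analog of Lemma~\ref{lem:hadamard_properties}(i)--(ii), which the paper has already asserted holds for Hadamard rectangles, each of the remaining $k-1$ rows then has exactly $b$ entries of each sign, and any two distinct non-first rows realize each of the four ordered pairs in $\{\pm 1\}^2$ in exactly $b/2$ of their $2b$ positions. Dropping the top row, transposing, and relabelling $+1 \mapsto 0$ and $-1 \mapsto 1$ produces a $2b \times (k-1)$ binary array in which every pair of columns displays each of $(0,0)$, $(0,1)$, $(1,0)$, $(1,1)$ exactly $(2b)/4$ times, which is precisely an OA$(2b, k-1, 2, 2)$.

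For the converse I reverse these steps. Strength $2$ of an OA$(2b, k-1, 2, 2)$ implies strength $1$, so each column contains $b$ zeros and $b$ ones; after relabelling $0 \mapsto +1$, $1 \mapsto -1$ and transposing, I obtain a $(k-1) \times 2b$ matrix whose rows each sum to $0$ and, on any two rows, the four sign patterns occur $b/2$ times each. A short count (agreements minus disagreements equals $b - b = 0$) yields pairwise row orthogonality. Prepending an all-$+1$ row of length $2b$ then supplies the desired $k \times 2b$ Hadamard rectangle, with orthogonality to the new row following from every other row summing to zero. The special case $k = 2b$ gives the "in particular" clause.

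I do not expect any real obstacle; the only point worth noting is that whenever $k \geq 3$ both formulations implicitly force $4 \mid 2b$, through Lemma~\ref{lem:hadamard_properties}(ii) on the Hadamard side and through the requirement that $(2b)/4$ be an integer on the OA side, so the equivalence is genuinely symmetric and the hypothesis $k < 2b$ merely separates off the square case treated in the second sentence of the lemma.
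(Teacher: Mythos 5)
Your proof is correct and is exactly the standard argument the paper has in mind: the paper states this lemma without proof, citing it as the rectangular generalization of Theorem 7.5 in Hedayat--Sloane--Stufken, and your normalization/transposition/relabelling equivalence (using the rectangular analogue of Lemma~\ref{lem:hadamard_properties}(i)--(ii) one way and the agreements-minus-disagreements count the other way) is precisely that classical construction. Your closing remark that both sides force $4 \mid 2b$ once $k \geq 3$ is a worthwhile observation that the paper leaves implicit.
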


If $B$ is a binary array then we define $\overline{B}$ to be the array obtained by interchanging $0$'s and $1$'s in $B$.

\begin{theorem}
\label{thm:ortho_to_mofrs}
Suppose there exists an \textup{OA}$(mn, k, 2,2)$. Then there exist $k$--\textup{MOFR}$(2m,2n;2)$. 
\end{theorem}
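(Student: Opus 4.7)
The plan is to use a \emph{doubling-and-complementation} construction. First, fix any bijection between the row set $\{1,\dots,mn\}$ of the orthogonal array and the cell set $[m]\times[n]$. Each column $\alpha\in\{1,\dots,k\}$ of the OA then reshapes into an $m\times n$ binary matrix $A_\alpha$. I would define the $2m\times 2n$ array
\begin{equation*}
F_\alpha \;=\; \begin{pmatrix} A_\alpha & \overline{A_\alpha} \\ \overline{A_\alpha} & A_\alpha \end{pmatrix}.
\end{equation*}
The same bijection must be used for every column so that the matrices $A_\alpha$ are pairwise comparable cell-by-cell.

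Next I would verify that each $F_\alpha$ has type FR$(2m,2n;2)$. In any row of $F_\alpha$, if $A_\alpha$ contributes $s$ ones then $\overline{A_\alpha}$ contributes $n-s$ ones, giving exactly $n$ ones out of $2n$ — independently of $s$. The same balancing works vertically, giving exactly $m$ ones in every column. This step is the whole reason for the doubling: it forces the row-and-column balance without demanding anything of $A_\alpha$ itself.

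For orthogonality of $F_\alpha$ and $F_\beta$ with $\alpha\neq\beta$, let $N_{ab}$ denote the number of cells where $A_\alpha$ equals $a$ and $A_\beta$ equals $b$. The OA strength-$2$ property applied to columns $\alpha,\beta$ gives $N_{ab}=mn/4$ for every $(a,b)\in\{0,1\}^2$. Summing the contributions of the four $m\times n$ blocks of the superimposition of $F_\alpha$ and $F_\beta$, the pair $(a,b)$ appears exactly
\begin{equation*}
N_{ab}+N_{1-a,b}+N_{a,1-b}+N_{1-a,1-b} \;=\; 4\cdot\frac{mn}{4} \;=\; mn \;=\; \frac{(2m)(2n)}{4}
\end{equation*}
times, which is the definition of orthogonality.

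I do not anticipate a genuine obstacle: the doubling-and-complementation trick is exactly what converts a balanced-cell condition into a balanced-row/column condition while preserving (and in fact averaging) the orthogonality counts. The only care required is consistency of the reshaping bijection across all columns and bookkeeping of the four block contributions when verifying strength-$2$ orthogonality.
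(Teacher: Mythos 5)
Your construction is exactly the one the paper uses: reshape each column of the orthogonal array into an $m\times n$ block $A_\alpha$ and form the $2\times 2$ block array $\bigl(\begin{smallmatrix} A_\alpha & \overline{A_\alpha} \\ \overline{A_\alpha} & A_\alpha \end{smallmatrix}\bigr)$, with orthogonality inherited from the strength-$2$ property of the column pairs. Your write-up simply spells out the row/column balance and the four-block count that the paper leaves to the reader; the argument is correct.
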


\begin{proof}
Let $M$ be an orthogonal array OA$(mn, k, 2,2)$. Let $ {\bf b} = (b_1, \dots , b_{mn})$ be any column of $M$. Define an $m \times n$ array $B$ corresponding to this column as follows:
$$
B = \left[
\begin{array}{cccc}
b_1 & b_2 & \dots & b_n \\
b_{n+1} & b_{n+2} & \dots & b_{2n} \\
\vdots & \vdots & \ddots & \vdots \\
b_{n(m-1)+1} & b_{n(m-1)+2} & \dots & b_{mn}
\end{array}
\right]
$$
Now, let $L_{\bf{b}}$ be the following array:

$$
L_{\bf{b}} =\left[
\renewcommand{\arraystretch}{1.2}
\begin{array}{c|c}
	 B & \overline{B} \\\hline
    \overline{B}  & B \\
\end{array}
\right]
$$
Then $L_b$ is a frequency rectangle of type FR$(2m,2n;2)$. Thus, by constructing an array corresponding to each column of $M$ we obtain a set of $k$ frequency rectangles of type FR$(2m, 2n; 2)$. The orthogonality of these arrays follows from the orthogonality of the columns of $M$.
\end{proof}

\begin{corollary}
Suppose there exists a Hadamard matrix $H(mn)$ where $4$ divides $mn$.  
Then there exist  
$(mn-1)$--\textup{MOFR}$(2m,2n;2)$.
\end{corollary}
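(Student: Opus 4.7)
The plan is to chain together the two results immediately preceding the corollary. From the Hadamard matrix $H(mn)$ (whose order satisfies $4 \mid mn$, and in particular is even, so we can take $2b = mn$), the lemma comparing Hadamard matrices to orthogonal arrays produces an \textup{OA}$(mn, mn-1, 2, 2)$. With this orthogonal array in hand, I would apply \tref{thm:ortho_to_mofrs} with $k = mn - 1$ to immediately obtain a set of $(mn-1)$--\textup{MOFR}$(2m, 2n; 2)$.

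Concretely, I would first invoke the lemma to pass from $H(mn)$ to an \textup{OA}$(mn, mn-1, 2, 2)$: normalize the Hadamard matrix, delete the all-ones column, and replace $-1$ with $0$; the strength-2 orthogonality of the resulting array follows from part (ii) of \lref{lem:hadamard_properties}. Then I would apply the construction in the proof of \tref{thm:ortho_to_mofrs}: for each of the $mn-1$ columns $\mathbf{b}$ of the orthogonal array, reshape $\mathbf{b}$ into an $m \times n$ block $B$ and form
\[
L_{\mathbf{b}} = \left[
\renewcommand{\arraystretch}{1.2}
\begin{array}{c|c}
B & \overline{B} \\ \hline
\overline{B} & B
\end{array}
\right].
\]
Since the construction and its output are exactly those of \tref{thm:ortho_to_mofrs}, no further verification is needed.

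There is no real obstacle here: the statement is essentially the specialization of \tref{thm:ortho_to_mofrs} obtained by plugging in the maximal $k$ guaranteed by a Hadamard matrix. The only small point to note is the hypothesis $4 \mid mn$, which is automatic whenever $H(mn)$ exists with $mn > 2$, and which ensures that $m/q$ and $n/q$ in the resulting FR$(2m, 2n; 2)$ are integers so the reshaping step in the construction is well-defined.
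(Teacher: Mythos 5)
Your proposal is correct and matches the paper's intended derivation exactly: the corollary is stated without proof precisely because it follows by passing from $H(mn)$ to an \textup{OA}$(mn,mn-1,2,2)$ via the preceding lemma and then applying Theorem~\ref{thm:ortho_to_mofrs} with $k=mn-1$. Nothing further is needed.
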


\begin{lemma}
There exist $k$--MOFR$(2,2n;2)$ if and only if 
there exists an OA$(2n,k,2,2)$.  
\end{lemma}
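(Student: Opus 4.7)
The plan is to establish a direct bijection between $k$-MOFR$(2,2n;2)$ and OA$(2n,k,2,2)$ via first rows. In any frequency rectangle of type FR$(2,2n;2)$, every column of length $2$ contains each symbol exactly once, so the second row is the componentwise complement of the first. Hence each such rectangle is determined by its first row, a binary vector of length $2n$ and weight $n$. Note that this is a different construction from Theorem~\ref{thm:ortho_to_mofrs} (which, specialised to $m=1$, would relate OA$(n,k,2,2)$ rather than OA$(2n,k,2,2)$ to $k$-MOFR$(2,2n;2)$).

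For the $(\Leftarrow)$ direction, I would take an OA$(2n,k,2,2)$ with columns $c_1,\ldots,c_k$ and let $F_i$ have first row $c_i$ and second row $\overline{c_i}$. Strength~$2$ forces each $c_i$ to be balanced, so each $F_i$ is a valid frequency rectangle. For orthogonality of $F_i$ and $F_j$, I would count occurrences of $(0,0)$ in the superimposition: in row~$1$ these are positions where $c_i=c_j=0$, and in row~$2$ they are positions where $c_i=c_j=1$. Strength~$2$ gives each count equal to $n/2$, summing to $n=(2\cdot 2n)/4$. The other three pair counts are handled identically.

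For the $(\Rightarrow)$ direction, let $r_i$ be the first row of $F_i$. Setting $x=|\{t:r_i(t)=r_j(t)=0\}|$, $y=|\{t:r_i(t)=0,r_j(t)=1\}|$, $z=|\{t:r_i(t)=1,r_j(t)=0\}|$ and $w=|\{t:r_i(t)=r_j(t)=1\}|$, the weight constraints $x+y=x+z=y+w=z+w=n$ immediately force $x=w$ and $y=z$, so the row-$1$ and row-$2$ contributions to each pair in the superimposition of $F_i$ and $F_j$ are equal. Orthogonality then forces $x=w=y=z=n/2$, which is precisely the strength-$2$ condition for the matrix whose columns are $r_1,\ldots,r_k$. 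No real obstacle arises; both directions reduce to the same identity, and the only insight needed is that fixing the row weights makes the two rows of each rectangle contribute symmetrically to the superimposed pair counts, so $2$-orthogonality of the rectangles and strength~$2$ of the OA are literally the same condition on the underlying vectors.
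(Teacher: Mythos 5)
Your proposal is correct and follows essentially the same route as the paper: both directions pass between the OA and the set of rectangles via first rows, using the fact that each FR$(2,2n;2)$ is determined by its balanced first row with the second row its complement, so that the two rows contribute symmetrically to the superimposed pair counts. Your $(\Rightarrow)$ direction with the four counts $x,y,z,w$ is a slightly more symmetric phrasing of the paper's argument (which instead normalises $L_1$ without loss of generality and tracks a single count $x$), but the underlying computation is the same.
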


\begin{proof}
Suppose there exists $k$--MOFR$(2,2n;2)$ and denote this set by $M$. Let $L_1, \dots, L_k \in M$. Let ${\bf r}_i$ be the first row of $L_i$. We claim that $[{\bf r}_1 \vert {\bf r}_2 \vert \dots \vert {\bf r}_k]$ is an OA$(2n,k,2,2)$. It is sufficient to show that ${\bf r}_1$ and ${\bf r}_2$ are orthogonal. Without loss of generality we may assume that $L_1$ has the following form:
$$
L_1 =
\begin{array}{|cccc | cccc|}
    \hline
	 0 & 0 & \dots & 0 & 1 & 1 & \dots &1 \\\hline
     1 & 1 & \dots & 1 & 0 & 0 & \dots &0 \\\hline
\end{array}
$$
Suppose that ${\bf r}_2$ contains $x$ zeros in the first $n$ positions. Then by the definition of frequency square the second row of $L_2$, that is $\overline{{\bf r}_2}$, contains $x$ zeros in the last $n$ positions. Therefore, the total number of $(0,0)$ pairs when $L_1$ and $L_2$ are superimposed is $2x$. Since $L_1$ and $L_2$ are orthogonal, $2x = n$ or $x = n/2$. Thus ${\bf r}_1$ and ${\bf r}_2$ when superimposed contain each type of pair the same number of times since $x = n-x$. \\
Conversely, corresponding to each column $\bf{c}$ of an OA$(2n,k,2,2)$ we generate a frequency square $L_{\bf{c}}$ which contains $\bf{c}$ and $\overline{\bf{c}}$ as its first and second row respectively. 
\end{proof}

\begin{theorem}
\sloppy
Suppose there exists a Hadamard matrix $H(4a)$. Then there exists $(4a-2)$--\textup{MOFR}$(4,2a;2)$.
\end{theorem}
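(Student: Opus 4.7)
The plan is to start with a normalized Hadamard matrix $H = H(4a)$ with rows $v_1, v_2, \ldots, v_{4a}$, where $v_1$ is the all-ones row. Rather than applying Theorem~\ref{thm:ortho_to_mofrs} directly (which would give the wrong shape, since we need OA$(2a,\cdot,2,2)$ for that), I will use one distinguished non-trivial row as a ``splitter'' and reshape each of the remaining rows into a $4\times 2a$ array. Concretely, fix the splitter $w = v_2$ and set $P_L = \{p : w_p = -1\}$ and $P_R = \{p : w_p = +1\}$. By \lref{lem:hadamard_properties}(i), $|P_L| = |P_R| = 2a$, so each set can be indexed $p_1 < \cdots < p_{2a}$ and $q_1 < \cdots < q_{2a}$.

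For each $i \in \{3, 4, \ldots, 4a\}$, I will define a $4\times 2a$ binary array $F_i$ by reading off $v_i$ on $P_L$ in the first row (with $-1\mapsto 0$), placing its bitwise complement in the second row, reading $v_i$ on $P_R$ in the third row, and its complement in the fourth row. This yields $4a-2$ arrays. The first substantive check is that each $F_i$ is a genuine FR$(4,2a;2)$: columns clearly contain two $0$s and two $1$s because rows $2$ and $4$ are complements of rows $1$ and $3$; for the row balance I need $v_i$ restricted to $P_L$ (and to $P_R$) to contain $a$ entries of each sign. This is exactly \lref{lem:hadamard_properties}(ii) applied to the pair $(v_i, v_2)$: each of the four sign patterns $(\pm,\pm)$ occurs $a$ times, and $P_L,P_R$ are defined by the value of $v_2$.

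The main content is then to verify pairwise orthogonality of $F_i$ and $F_j$ for distinct $i,j \in \{3,\ldots,4a\}$. Writing $n_{\epsilon\delta} = |\{p \in [4a] : v_i(p)=\epsilon,\ v_j(p)=\delta\}|$, orthogonality of $v_i$ and $v_j$ together with \lref{lem:hadamard_properties}(ii) gives $n_{\epsilon\delta} = a$ for every $(\epsilon,\delta)\in\{\pm1\}^2$. When $F_i$ and $F_j$ are superimposed, the count of an ordered pair such as $(0,0)$ is
\[
n_{--}(P_L) + n_{++}(P_L) + n_{--}(P_R) + n_{++}(P_R) = n_{--} + n_{++} = 2a,
\]
where the second and fourth terms come from rows 2 and 4 being complemented. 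The same telescoping occurs for $(0,1)$, $(1,0)$, $(1,1)$, each yielding $2a = mn/q^2$. Thus $F_i \perp F_j$.

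The only potential obstacle is the row-balance condition in the reshaped arrays; without it, the $F_i$ would not even be frequency rectangles. This is precisely why the splitter must itself be a Hadamard row (rather than an arbitrary balanced vector): the $2$-way count in \lref{lem:hadamard_properties}(ii) forces the restriction of each other $v_i$ to $P_L$ and to $P_R$ to be balanced. Everything else — orthogonality of the reshaped arrays and the final count $4a-2 = |\{v_3,\ldots,v_{4a}\}|$ — follows mechanically from the Hadamard orthogonality relations.
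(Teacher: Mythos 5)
Your construction is correct and is essentially the paper's own proof: the paper likewise normalizes $H(4a)$, uses the second (non-constant) column to split the coordinate set into two halves of size $2a$, reshapes each remaining column into a $2\times 2a$ block $B$, and stacks it with $\overline{B}$ to get a FR$(4,2a;2)$, with balance and orthogonality following from the Hadamard two-way counts exactly as you argue. The only differences are cosmetic (rows versus columns, and a permutation of the four rows of each $4\times 2a$ array), and your explicit verification of pairwise orthogonality is in fact more detailed than the paper's.
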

 
\begin{proof}
Let $H$ be a Hadamard matrix of order $4a$ in normalized form. Replace $-1$'s with $0$'s in $H$. Let $c_i, i \in \{1, \dots, 4a\}$ be the columns of $H$. Without loss of generality we may assume that $c_1$ and $c_2$ have the following form:
$$
\begin{array}{ccc}
    c_1 && c_2 \\\hline
    1 && 1 \\
    1 && 1 \\
    \vdots && \vdots \\
    1 && 1 \\
    1 && 0 \\
    1 && 0 \\
    \vdots && \vdots \\
    1 && 0 \\
\end{array}
$$
Since the columns $c_2, \dots, c_{4a}$ of $H$ form an OA$(4a, 4a-1, 2,2)$, the columns $c_3, \dots, c_{4a}$ have the property that each contains exactly $a$ zeroes and $a$ ones in the first $2a$ positions. Let $ {\bf b} \in \{c_3, \dots, c_{2a}\}$, where $ {\bf b} = (b_1, \dots , b_{4a})$. Define a $2 \times 2a$ array $B$ corresponding to $\bf{b}$ as follows:
$$
B = \left[
\begin{array}{cccc}
b_1 & b_2 & \dots & b_{2a} \\
b_{2a+1} & b_{2a+2} & \dots & b_{4a} \\
\end{array}
\right]
$$
Now,
$$
L_{\bf{b}} =\left[
\renewcommand{\arraystretch}{1.2}
\begin{array}{c}
	 B \\\hline
    \overline{B} \\
\end{array}
\right]
$$
is a FR$(4,2a;2)$. Observe that the set $\{L_{\bf b} : {\bf b} \in \{c_3, \dots, c_{2a}\} \} $ forms a $(4a-2)$--MOFR$(4,2a;2)$.
\end{proof}

\section{$t$--orthogonal frequency rectangles} \label{sec:t_orthogonal_FR}

Recall that a $k$--MOFR$(m,n;q)$ is $t$--\emph{orthogonal} if each subset of size $t$, upon superimposition, gives $mn/q^t$ copies of the full factorial design. A set of vectors is said to be $t$--\emph{independent} if each subset of size $t$ is linearly independent. 
In this section, we  describe a relationship between a set of $t$--independent vectors and a set of $t$--orthogonal frequency rectangles. We also include some results from the literature about the known bounds for the size of a set of $t$--independent vectors. At the end of this section, we formulate a table that provides lower bounds on $k$ for a set of $t$--orthogonal $k$--MOFR$(m,n;2)$, by using existence results on binary linear codes.

Let ${\bf u} = (u_1, u_2, \dots, u_M)$ and ${\bf v} = (v_1, v_2, \dots, v_N)$ be vectors over the field $\mathbb{F}_q$ of length $M$ and $N$, respectively. Then we define ${\bf u} \oplus {\bf v} = (u_1, u_2, \dots, u_M, v_1, v_2, \dots, v_N) $ be the vector of length $M+N$ obtained by the concatenation of ${\bf u}$ and ${\bf v}$. And by a \emph{cyclic shift} of ${\bf v}$ we mean the vector ${\bf v}' = (v_N, v_1, v_2, \dots, v_{N-1})$.

We first give here a result that uses orthogonal arrays to construct a set of $t$--orthogonal frequency rectangles.
\begin{theorem}
If there exists an \textup{OA}$(2m,k,2,t)$ then there exists a $t$--orthogonal
$k$--\textup{MOFR}$(2m,2m;2)$. 
\end{theorem}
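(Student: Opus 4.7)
The plan is to associate with each column of the given OA a $2m \times 2m$ binary array obtained by placing $m$ side-by-side copies of that column and then appending $m$ copies of its bitwise complement; the strength-$t$ property of the OA should then translate directly into $t$-orthogonality of the resulting frequency rectangles.

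Concretely, write the columns of the OA as vectors $\mathbf{b}^{(1)}, \dots, \mathbf{b}^{(k)} \in \{0,1\}^{2m}$. For each $\ell$, let $B_\ell$ denote the $2m \times m$ matrix whose $m$ columns are all equal to $\mathbf{b}^{(\ell)}$, and define the $2m \times 2m$ array
$$
F_\ell \;=\; \bigl[\, B_\ell \;\big|\; \overline{B_\ell}\,\bigr].
$$
First I would verify that each $F_\ell$ is a frequency rectangle of type FR$(2m,2m;2)$: row $i$ contains $m$ copies of $b^{(\ell)}_i$ and $m$ copies of $\overline{b^{(\ell)}_i}$, while each column of $F_\ell$ is either $\mathbf{b}^{(\ell)}$ or $\overline{\mathbf{b}^{(\ell)}}$, both balanced because the OA has strength at least $1$.

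The core step is the $t$-orthogonality count. Fix any $t$ indices $\ell_1, \dots, \ell_t$ and any tuple $\boldsymbol{\epsilon} = (\epsilon_1, \dots, \epsilon_t) \in \{0,1\}^t$. At a position $(i,j)$ with $j \le m$, the superimposed $t$-tuple is $(b^{(\ell_1)}_i, \dots, b^{(\ell_t)}_i)$; by the strength-$t$ property of the OA, this equals $\boldsymbol{\epsilon}$ for exactly $2m/2^t$ values of $i$, yielding $m \cdot (2m/2^t) = 2m^2/2^t$ contributing positions. At a position $(i,j)$ with $j > m$, the superimposed tuple equals $\boldsymbol{\epsilon}$ iff $b^{(\ell_s)}_i = \overline{\epsilon_s}$ for every $s$, which again holds for $2m/2^t$ values of $i$, contributing a further $2m^2/2^t$ positions. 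The total is $4m^2/2^t = (2m)(2m)/2^t$, which is the required count.

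Distinctness of the $F_\ell$ is automatic since the first column of $F_\ell$ equals $\mathbf{b}^{(\ell)}$, and the columns of any OA of strength $t \ge 2$ are pairwise distinct. There is no real obstacle beyond recognising the right construction; once the product form $[\, B_\ell \mid \overline{B_\ell}\,]$ is written down, the $t$-orthogonality count reduces to a one-line application of the defining strength-$t$ property of the OA.
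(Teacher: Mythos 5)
Your proof is correct, but it takes a genuinely different route from the paper's. The paper builds $F_{\bf v}$ by letting column $i$ be the $i$th cyclic shift of the OA column ${\bf v}$, so that cell $(i,j)$ reads off row $(i-j) \bmod 2m$ of the orthogonal array and each OA row is hit exactly $2m$ times over the whole square; you instead take the block form $\bigl[\,B_\ell \mid \overline{B_\ell}\,\bigr]$ with $B_\ell$ consisting of $m$ identical copies of the column, and split the tuple count between the two halves (the right half picking up the tuples counted at $\overline{\boldsymbol{\epsilon}}$). Both arguments rest on the same two facts --- strength at least $1$ gives the column balance, strength $t$ gives the $t$-tuple count --- and both are complete. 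Your version is arguably easier to verify, and it buys a little extra generality: replacing the $m$ copies by $c$ copies gives a $t$-orthogonal $k$--MOFR$(2m,2c;2)$ for any $c \geq 1$, whereas the cyclic-shift construction is tied to squares. One pedantic point, which applies equally to the paper's own one-line justification: to call the resulting set a $t$-orthogonal $k$--MOFR you also need pairwise orthogonality, but this follows from your same count with $t$ replaced by $2$, since an orthogonal array of strength $t \geq 2$ also has strength $2$.
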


\begin{proof}
Let ${\bf v}$ be a column vector of an OA$(2m,k,2,t)$. Construct a frequency square $F_{\bf v}$ where column $i$ is the $i$th cyclic shift of ${\bf v}$. Clearly $F_{\bf v}$ is a frequency rectangle of type FR$(2m,2m;2)$.
Now the $t$--orthogonality of these arrays follows from the definition of $t$ in OA$(2m,k,2,t)$.
\end{proof}

The converse of the above theorem is not true in general. However, we have the following result. 

\begin{theorem}
    \sloppy If there exists a $t$--orthogonal $k$--\textup{MOFR}$(m,n;q)$, then there exists an orthogonal array \textup{OA}$(mn,k,q,t)$.
\end{theorem}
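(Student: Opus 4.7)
The plan is to construct the orthogonal array directly from the frequency rectangles by linearizing each $F_i$ into a single column. Fix any bijection $\phi:\{1,2,\dots,mn\}\to\{1,2,\dots,m\}\times\{1,2,\dots,n\}$, say the row-major enumeration $\phi(n(r-1)+c)=(r,c)$. Given a $t$--orthogonal $k$--MOFR$(m,n;q)$, say $\{F_1,F_2,\dots,F_k\}$, define an $mn\times k$ array $A$ over the common symbol set by
\[
A_{j,i}=F_i[\phi(j)],\qquad 1\le j\le mn,\ 1\le i\le k.
\]

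Next I would verify that $A$ satisfies the defining property of an OA$(mn,k,q,t)$. Fix an arbitrary choice of $t$ columns $i_1<i_2<\dots<i_t$ and consider the resulting $mn\times t$ submatrix. Row $j$ of this submatrix is the tuple
\[
\bigl(F_{i_1}[\phi(j)],\,F_{i_2}[\phi(j)],\,\dots,\,F_{i_t}[\phi(j)]\bigr).
\]
As $j$ ranges over $\{1,\dots,mn\}$, the cell $\phi(j)$ ranges over every cell of the $m\times n$ grid exactly once, so the multiset of rows of the submatrix coincides precisely with the multiset of ordered $t$--tuples that appears when $F_{i_1},F_{i_2},\dots,F_{i_t}$ are superimposed. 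The $t$--orthogonality hypothesis says that this multiset contains each of the $q^t$ ordered $t$--tuples exactly $mn/q^t$ times, which is exactly the OA condition. Since the choice of $t$ columns was arbitrary, $A$ is an OA$(mn,k,q,t)$.

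There is no genuine obstacle in this direction; the statement essentially repackages the same data in a different shape (a two--dimensional array of cells versus a one--dimensional list of cells). The only point to be careful about is that the \emph{same} linearization $\phi$ must be used for every $F_i$, so that row $j$ of $A$ corresponds to a single fixed cell across all $k$ rectangles --- but this is automatic once $\phi$ is fixed before the construction. Note also that this proof uses nothing about the frequency--rectangle condition on rows and columns of each $F_i$ beyond what is already captured by $t$--orthogonality with $t\ge 2$, which is consistent with the fact that the converse direction fails in general.
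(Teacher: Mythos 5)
Your construction is exactly the paper's: the paper concatenates the rows of each $F_i$ into a column vector ${\bf f}_i$ of length $mn$ and assembles these as the columns of an $mn\times k$ array, which is precisely your row-major linearization $\phi$. Your verification of the OA property is correct and simply spells out the step the paper leaves as ``observe that''.
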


\begin{proof}
Let $S = \{F_1, \dots, F_k\}$ be a $t$--orthogonal $k$--\textup{MOFR}$(m,n;q)$. Corresponding to each $F_i \in S$, we construct a vector ${\bf f}_i$ of length $mn$ as follows. Let ${\bf v}_1, \dots, {\bf v}_m$ be, in sequential order, the row vectors of $F_i$. Let ${\bf f}_i = {\bf v}_1 \oplus {\bf v}_2 \oplus \dots \oplus {\bf v}_m $. Let $M$ be the $(mn) \times k $ array which contains each element of $\{ {\bf f}_i : 1 \leq i \leq k \} $ as a column vector. Observe that $M$ is an OA$(mn,k,q,t)$. 
\end{proof}

\begin{theorem} \label{thm:t_independent_FR_vectors}
\sloppy Let $S$ be a set of $k$ $t$-independent vectors in $({\mathbb F}_q)^{M+N}$ such that for each ${\bf{v}} = (v_1, \dots, v_{M+N}) \in S$
\begin{enumerate}
    \item[\textup{(i)}] $(v_1, \dots, v_M) \neq (0, \dots, 0) $,
    \item[\textup{(ii)}] $(v_{M+1}, \dots, v_{M+N}) \neq (0, \dots, 0) $,
\end{enumerate}
then there exists a $t$--orthogonal $k$--\textup{MOFR}$(q^M,q^N;q)$.
\end{theorem}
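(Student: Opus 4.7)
The plan is to realise each frequency rectangle as the character table of a linear form over $\mathbb{F}_q$, so that $t$-orthogonality reduces to the statement that any $t$ linearly independent linear forms on $\mathbb{F}_q^{M+N}$ give a surjective map onto $\mathbb{F}_q^t$ with equal-sized fibres.

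Concretely, I would index the rows of a $q^M \times q^N$ array by the elements ${\bf x}$ of $\mathbb{F}_q^M$ (in some fixed ordering) and the columns by the elements ${\bf y}$ of $\mathbb{F}_q^N$. For each ${\bf v} \in S$, split ${\bf v} = {\bf u} \oplus {\bf w}$ with ${\bf u} = (v_1,\dots,v_M) \in \mathbb{F}_q^M$ and ${\bf w} = (v_{M+1},\dots,v_{M+N}) \in \mathbb{F}_q^N$, and define the array $F_{\bf v}$ by
\begin{equation*}
F_{\bf v}({\bf x},{\bf y}) \;=\; {\bf u} \cdot {\bf x} \;+\; {\bf w} \cdot {\bf y} \quad \in \mathbb{F}_q,
\end{equation*}
where the dot denotes the standard inner product in $\mathbb{F}_q$. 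The collection $\{F_{\bf v} : {\bf v} \in S\}$ will be the claimed $t$--orthogonal $k$--MOFR.

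Next I would verify that each $F_{\bf v}$ is indeed a frequency rectangle of type FR$(q^M, q^N; q)$. Fix a row ${\bf x}$: the entry in that row is the affine function ${\bf y} \mapsto {\bf w} \cdot {\bf y} + c$ where $c = {\bf u} \cdot {\bf x}$ is a constant. Hypothesis (ii) gives ${\bf w} \neq 0$, so this affine form is surjective onto $\mathbb{F}_q$, and each value is attained on a coset of $\ker({\bf w} \cdot -)$ of size $q^{N-1} = q^N/q$. Hypothesis (i) handles columns symmetrically, giving each symbol $q^M/q$ times in each column.

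For the $t$--orthogonality, take any $t$ distinct vectors ${\bf v}_1, \dots, {\bf v}_t \in S$. Superimposing the corresponding frequency rectangles, the entry in cell $({\bf x},{\bf y})$ is the $t$-tuple
\begin{equation*}
\bigl({\bf v}_1 \cdot ({\bf x} \oplus {\bf y}),\, {\bf v}_2 \cdot ({\bf x} \oplus {\bf y}),\, \dots,\, {\bf v}_t \cdot ({\bf x} \oplus {\bf y})\bigr) \in (\mathbb{F}_q)^t.
\end{equation*}
This defines a linear map $\Phi : (\mathbb{F}_q)^{M+N} \to (\mathbb{F}_q)^t$ whose matrix has rows ${\bf v}_1, \dots, {\bf v}_t$. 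Since $S$ is $t$--independent, these rows are linearly independent, so $\Phi$ has rank $t$ and is surjective with every fibre of size $q^{M+N-t}$. Equivalently, every $t$--tuple in $(\mathbb{F}_q)^t$ appears exactly $q^{M+N-t} = q^M q^N / q^t$ times in the superimposed array, which is precisely the $t$--orthogonality condition.

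The only step requiring care is noting that row/column balance needs both halves of the vector to be nonzero (which is why hypotheses (i) and (ii) are stated), whereas $t$-orthogonality uses only linear independence; the rest is routine counting of fibres of a surjective linear map. The main "obstacle", if any, is simply choosing the right identification of cells with $\mathbb{F}_q^M \times \mathbb{F}_q^N$ so that the linear-algebra argument is transparent.
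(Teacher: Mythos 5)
Your proof is correct and is essentially the same as the paper's: the paper defines each array via the linear form $f_{\bf v}=\sum_i v_i x_i$ on row/column labels from $(\mathbb{F}_q)^M\times(\mathbb{F}_q)^N$, proves row/column balance by counting solutions of a single nonzero linear equation (using (i) and (ii)), and gets $t$--orthogonality by counting the $q^{M+N-t}$ solutions of the rank-$t$ system $HX=\alpha^T$. Your split ${\bf u}\cdot{\bf x}+{\bf w}\cdot{\bf y}$ and the fibre-counting of the surjective map $\Phi$ are just notational variants of that argument.
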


\begin{proof}
\sloppy Corresponding to each vector ${\bf v} \in S$ we construct a frequency rectangle $F_{\bf v}$ as follows. Let ${\bf v} = (v_1, \dots, v_{M+N})$. We define a polynomial, 
$$f_{\bf v} = \sum_{i = 1}^{M+N}v_ix_i. $$ 
Now we label the rows and columns of a $q^M \times q^N$ array by using all $M$--tuples and $N$--tuples, respectively, over the field $ \mathbb{F}_q$. Let the cell in the intersection of row $(r_1, \dots, r_M)$ and column $(c_1, \dots, c_N)$ contain the entry $f_{\bf v}(r_1, \dots, r_M, c_1, \dots, c_N) $.

To show that $F_{\bf v} $ is a frequency rectangle, fix a column ${\bf c}$ of $F_{\bf v}$, labeled by $(c_1, \dots, c_N)$. Let $\beta \in \mathbb{F}_q$. The number of appearances of $\beta$ in ${\bf c}$ is equal to the number of solutions to the following equation over $\mathbb{F}_q$:
\begin{gather} \label{eq:polynomial_in_column}
		f_{\bf v}(x_{1},..., x_{M},c_1, \dots, c_N) = \beta
		\end{gather}
By (i) there is at least one $i \in \{1, \dots, M\}$ for which $v_i \neq 0$, thus equation (\ref{eq:polynomial_in_column}) has exactly $q^{M-1}$ solutions over $\mathbb{F}_q$. This shows that $\beta$ occurs exactly $q^{M-1}$ times in ${\bf c}$. Similarly, we can show that each element of $\mathbb{F}_q$ occurs $q^{N-1}$ times in each row of $F_{\bf v}$.
 
Thus $\{F_{\bf v} : {\bf v} \in S \}$ is a set of $k$ frequency rectangles of type FR$(q^M, q^N;q)$. It remains to show that this set is $t$--orthogonal. Let $S'$ be a subset of $S$ of size $t$ and consider a $t$-tuple $\alpha = (\alpha_1, \dots, \alpha_t) $ in $\mathbb{F}_q$. Now consider the following system of equations:
$$
HX = \alpha^T
$$
where, $X = (x_1, \dots, x_{M+N})^T $ and $H$ is a $t \times (M+N)$ matrix that contains each element of $S'$ as a row. Since $S$ is a $t$--independent set of vectors, this system of equations has rank $t$ and therefore there are exactly $q^{M+N-t}$ solutions for each $\alpha \in (\mathbb{F}_q)^t$. Thus the set $\{F_{\bf v} : {\bf v} \in S \}$ is $t$--orthogonal $k$--MOFR$(q^M, q^N;q)$.
\end{proof}

\begin{corollary} \label{cor:t_indpendent_MOFR}
\sloppy Let $t \geq 1$. If there exists a set of $k$ $t$--independent vectors in $({\mathbb F}_q)^M$ then there exists a $t$--orthogonal $k$--\textup{MOFR}$(q^M,q^N;2)$, where $N\geq M$. 
\end{corollary}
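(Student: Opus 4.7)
The plan is to reduce the corollary to Theorem~\ref{thm:t_independent_FR_vectors} by extending each of the given $k$ vectors in $(\mathbb{F}_q)^M$ to a vector in $(\mathbb{F}_q)^{M+N}$ in such a way that the enlarged family still has $k$ vectors, is still $t$-independent, and satisfies both the left-half and right-half non-vanishing conditions (i), (ii) of Theorem~\ref{thm:t_independent_FR_vectors}.

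Concretely, let $\{\mathbf{u}_1,\dots,\mathbf{u}_k\}\subseteq(\mathbb{F}_q)^M$ be the given $t$-independent set. Fix any nonzero padding vector $\mathbf{w}\in(\mathbb{F}_q)^N$ (for definiteness, one can take $\mathbf{w}=(1,0,\dots,0)$, or, using $N\geq M$, pad $\mathbf{u}_i$ itself with $N-M$ zeros) and set $\mathbf{v}_i := \mathbf{u}_i \oplus \mathbf{w}$ for $1\leq i\leq k$. Condition (i) of Theorem~\ref{thm:t_independent_FR_vectors} holds because every member of a $t$-independent set (with $t\geq 1$) is nonzero, so the first $M$ coordinates of each $\mathbf{v}_i$ are nonzero. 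Condition (ii) is immediate by the choice of $\mathbf{w}$.

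The only point that needs verification is that $\{\mathbf{v}_1,\dots,\mathbf{v}_k\}$ remains $t$-independent. This is the routine part of the argument: any linear dependence $\sum_{j=1}^{t} \lambda_j \mathbf{v}_{i_j}=\mathbf{0}$ among $t$ of these extended vectors, when projected onto the first $M$ coordinates, becomes $\sum_{j=1}^{t}\lambda_j\mathbf{u}_{i_j}=\mathbf{0}$, so $t$-independence of the original set forces all $\lambda_j=0$. Once this is in hand, Theorem~\ref{thm:t_independent_FR_vectors} applied to $\{\mathbf{v}_1,\dots,\mathbf{v}_k\}$ delivers a $t$-orthogonal $k$-MOFR$(q^M,q^N;q)$. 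I do not foresee any real obstacle; I note only that the hypothesis $N\geq M$ appears superfluous for the existence statement itself (the argument works for any $N\geq 1$), so it is likely retained only to align with the parameter range used in the authors' subsequent tables or to naturally accommodate the padded choice $\mathbf{w}=(\mathbf{u}_i,0,\dots,0)$.
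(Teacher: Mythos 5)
Your proof is correct and follows essentially the same route as the paper: extend each vector to length $M+N$ so that both halves are nonzero, check $t$-independence by projecting any dependence onto the first $M$ coordinates, and invoke Theorem~\ref{thm:t_independent_FR_vectors}. The only (immaterial) difference is the padding --- the paper uses ${\bf v}\oplus{\bf v}\oplus{\bf 0}$, which is why it assumes $N\geq M$, whereas your fixed nonzero ${\bf w}$ works for any $N\geq 1$, confirming your remark that the hypothesis $N\geq M$ is not essential.
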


\begin{proof}
   \sloppy Let $S$ be a set of $k$ $t$--independent vectors. Since $t \geq 1$, ${\bf 0} \not \in S $. For each ${\bf v} = (v_1, \dots, v_M) \in S$, we define ${\bf v}' = {\bf v} \oplus {\bf v} \oplus {\bf 0}$ of length $M+N$, where ${\bf 0} $ is a zero vector of length $N-M$. Let $S' = \{ {\bf v}' : {\bf v} \in S \}$. Observe that $S'$ is $t$--independent and satisfies the conditions (i) and (ii) of Theorem \ref{thm:t_independent_FR_vectors}.
\end{proof}

\begin{theorem}
Suppose there exists a set of $k_1$ $3$--independent vectors of length $M$ and a set of $k_2$  $3$--independent vectors of length $N$ over the field $\mathbb{F}_q$. Then there exists $t$--orthogonal $(k_1k_2)$--\textup{MOFR}$(q^M, q^N; q)$.      
\end{theorem}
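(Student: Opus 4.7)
The plan is to reduce the statement to \tref{thm:t_independent_FR_vectors} with $t=3$ by producing a set of $k_1k_2$ $3$--independent vectors in $(\mathbb{F}_q)^{M+N}$ satisfying its conditions (i) and (ii). Let $U = \{u_1, \dots, u_{k_1}\} \subseteq (\mathbb{F}_q)^M$ and $W = \{w_1, \dots, w_{k_2}\} \subseteq (\mathbb{F}_q)^N$ be the given $3$--independent sets, and take
$$ S = \{\, u_i \oplus w_j : 1 \leq i \leq k_1,\ 1 \leq j \leq k_2 \,\}. $$
Since $3$--independence rules out the zero vector, each $u_i$ and each $w_j$ is nonzero, so every element of $S$ automatically satisfies conditions (i) and (ii). The content of the proof is therefore to show that $S$ itself is $3$--independent, after which \tref{thm:t_independent_FR_vectors} immediately delivers the required $3$--orthogonal $(k_1k_2)$--MOFR$(q^M, q^N; q)$.

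To verify $3$--independence, I would fix three distinct pairs $(i_1, j_1), (i_2, j_2), (i_3, j_3)$ and a putative dependence $\sum_{\ell=1}^{3} \alpha_\ell (u_{i_\ell} \oplus w_{j_\ell}) = 0$, split it coordinatewise into
$$ \sum_{\ell=1}^{3} \alpha_\ell u_{i_\ell} = 0 \qquad \text{and} \qquad \sum_{\ell=1}^{3} \alpha_\ell w_{j_\ell} = 0, $$
and argue by cases on the repetition pattern of the $i_\ell$. If the three $i_\ell$'s are pairwise distinct, the first equation combined with $3$--independence of $U$ immediately yields $\alpha_1 = \alpha_2 = \alpha_3 = 0$; if all three are equal, then distinctness of the pairs forces the three $j_\ell$'s to be pairwise distinct, and the second equation with $3$--independence of $W$ does the same. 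The only remaining case is exactly one coincidence among the $i_\ell$'s, say $i_1 = i_2 \neq i_3$, which forces $j_1 \neq j_2$. The first equation then reads $(\alpha_1 + \alpha_2)u_{i_1} + \alpha_3 u_{i_3} = 0$, and since two distinct members of a $3$--independent set are linearly independent, this gives $\alpha_3 = 0$ and $\alpha_2 = -\alpha_1$. Substituting into the second equation yields $\alpha_1(w_{j_1} - w_{j_2}) = 0$, and $w_{j_1} \neq w_{j_2}$ then forces $\alpha_1 = \alpha_2 = 0$. The case with two $j_\ell$'s coinciding is symmetric.

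The only mildly subtle point is the mixed case above, where one must alternate between the two halves of the concatenation rather than dispatch the dependence by a single appeal to $3$--independence on one side. The key leverage is that the repeated index on one side collapses that side's equation into a two-term relation (handled by $2$--independence, inherited from $3$--independence), while the distinct indices on the other side then eliminate the surviving scalar. Since no step requires calculation heavier than this small case split, I do not anticipate any real obstacle beyond organizing the cases cleanly.
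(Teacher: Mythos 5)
Your proposal is correct and follows essentially the same route as the paper: form the concatenation set $\{u_i \oplus w_j\}$, observe conditions (i) and (ii) of Theorem \ref{thm:t_independent_FR_vectors} hold, and verify $3$--independence by splitting a putative dependence coordinatewise and casing on the repetition pattern, with the mixed case resolved exactly as in the paper (collapse one side to a two-term relation, then eliminate the remaining scalar on the other side). Your write-up is if anything slightly more complete, since you explicitly treat the all-indices-equal case that the paper absorbs into its ``without loss of generality.''
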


\begin{proof}
    Let $S$ and $T$ be sets of $3$--independent vectors of length $M$ and $N$, respectively, where $|S|=k_1$ and $|T|=k_2$. Define $S' = \{{\bf u} \oplus {\bf v} : {\bf u} \in S, {\bf v} \in T \} $. Observe that $S'$ satisfies the conditions (i) and (ii) in Theorem \ref{thm:t_independent_FR_vectors} and $\vert S' \vert = k_1k_2$. We claim that $S'$ is also $3$--independent.

    Let ${\bf x}, {\bf y}, {\bf z} $ be three distinct elements in $ S'$. Then ${\bf x} = {\bf a} \oplus {\bf d}$, ${\bf y} = {\bf b} \oplus {\bf e}$, and $ {\bf z} = {\bf c} \oplus {\bf f} $, where ${\bf a}, {\bf b}, {\bf c} \in S$ and ${\bf d},{\bf e},{\bf f} \in T$. Without loss of generality, we have the following two cases to consider: \\
    \textbf{Case I}: ${\bf a}, {\bf b}, {\bf c}$ are all distinct. In this case ${\bf x},{\bf y},{\bf z}$ are linearly independent. \\
    \textbf{Case II}: ${\bf a}={\bf b} \neq {\bf c}$. Observe that ${\bf d} \neq {\bf e}$ in this case. Suppose that there exist $\alpha, \beta, \gamma \in \mathbb{F}_q$ such that: 
    \begin{equation*}
    \alpha {\bf x} + \beta {\bf y} + \gamma {\bf z} = {\bf 0}
    \end{equation*}
    Then we have:
    \begin{equation} \label{eq:t=3_abc}
        \alpha {\bf a} + \beta {\bf a} + \gamma {\bf c} = {\bf 0} \
    \end{equation}
    \begin{equation} \label{eq:t=3_def}
        \alpha {\bf d} + \beta {\bf e} + \gamma {\bf f} = {\bf 0} 
    \end{equation}
    From equation (\ref{eq:t=3_abc}),  $(\alpha + \beta){\bf a} = -\gamma {\bf c}$. Since ${\bf a}$ and ${\bf c}$ are linearly independent, $ \gamma = 0$ and $\alpha = - \beta$. But then equation (\ref{eq:t=3_def}) implies ${\bf d} = {\bf e} $, which is a contradiction, since ${\bf d}$ and ${\bf e}$ are two distinct elements of $3$--independent set $T$.
\end{proof}

The above results indicate that there is a close relationship between a set of $t$--independent vectors over $\mathbb{F}_q$ and $t$--orthogonal frequency rectangles. Therefore, we are interested in the maximum size of sets of linearly independent vectors corresponding to different sets of parameters. For particular values of $ N,q $ and $ t $, let Ind$_q(N,t) $ denote the maximum possible size of a set of $t$--independent vectors of length $N$ over a finite field $\mathbb{F}_q$. In the case of $q=2 $, we drop $q$ and simply write Ind$(N,t)$. As we know that any two vectors are linearly independent if and only if one is not a scalar multiple of the other, it is easy to see that Ind$_q(N,2) = (q^N - 1)/(q-1) $. Also for any $t \geq 2$, Ind$_q(N,t-1) \geq$ Ind$_q(N,t) $. The values Ind$_q(N,t) $ when $t \geq 3 $ are of interest to researchers in coding theory, combinatorics, and matroid theory \cite{ball2012sets, ball2012setsII, damelin2004number, damelin2007cardinality}. The following result in the case $q=2$ is taken from \cite{damelin2004number}.

\begin{theorem} \label{thm:Ind_basic}
	For $ q=2  $ the following formulae hold:
	
	\begin{enumerate}
		\item [\textup{(a)}] \begin{equation}
			\textup{Ind}(N,3)=2^{N-1}, \hspace{1cm} for \hspace{0.2cm} N \geq 3.
		\end{equation}
		\item [\textup{(b)}] \begin{equation}
			\textup{Ind}(N,N-r)=N+1, \hspace{1cm} for \hspace{0.2cm} N\geq3r+2, \ \ r\geq 0.
		\end{equation}
		\item [\textup{(c)}] \begin{equation}
			\textup{Ind}(N,N-r)=N+2, \hspace{0.6cm} for \ N=3r+i, \ i= 0,1, \ \ r \geq2.
		\end{equation}
	\end{enumerate}
\end{theorem}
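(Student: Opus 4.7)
The plan is to translate each part of the theorem into coding-theoretic language: a set of $n$ vectors in $\mathbb{F}_2^N$ is $t$-independent if and only if these vectors form the columns of an $N \times n$ parity check matrix of a binary linear code whose minimum distance is at least $t+1$. Thus $\textup{Ind}(N,t)$ equals the largest $n$ for which a binary $[n, n-N, \geq t+1]$ code exists, and the three parts become statements about extremal binary codes of prescribed redundancy and distance.

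For part (a), I would argue geometrically in $\textup{PG}(N-1,2)$, where three points are collinear precisely when they sum to zero, so that a $3$-independent set is exactly a cap. The lower bound $2^{N-1}$ is attained by the affine hyperplane $\{x \in \mathbb{F}_2^N : a\cdot x = 1\}$ for any fixed nonzero $a$, since $a\cdot(v_1+v_2+v_3) = 3 = 1$ in $\mathbb{F}_2$ and the affine hyperplane contains $2^{N-1}$ points. For the upper bound, fix $v$ in a cap $S$; the $2^{N-1}-1$ lines of $\textup{PG}(N-1,2)$ through $v$ partition the remaining $2^N-2$ points into triples, and each such line meets $S \setminus \{v\}$ in at most one further point, giving $|S| \leq 2^{N-1}$.

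For part (b), the lower bound comes from the parity check matrix $[I_N \mid \mathbf{1}]$ of the $[N+1,1,N+1]$ repetition code, whose $N+1$ columns $e_1,\dots,e_N, e_1+\cdots+e_N$ are even $N$-independent: any chosen $N-r$ of them either consist of distinct standard basis vectors, or include $e_1+\cdots+e_N$ together with fewer than $N$ basis vectors, in which case some $e_j$ outside the chosen set forces every coefficient in a putative dependence to vanish. The upper bound is the heart of the argument and uses the Plotkin bound: if $N+2$ vectors in $\mathbb{F}_2^N$ were $(N-r)$-independent, the associated $[N+2, 2, d]$ code would satisfy $d \geq N-r+1$, and Plotkin gives $A(N+2, d) \leq 2\lfloor d/(2d-N-2)\rfloor$. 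Substituting $d = N-r+1$ yields $2d-N-2 = N-2r$, and the quotient $(N-r+1)/(N-2r)$ is strictly less than $2$ exactly when $N \geq 3r+2$; hence $A(N+2, N-r+1) \leq 2$, contradicting the required dimension $2$.

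For part (c) the same strategy applies with more care. The lower bound is obtained by exhibiting explicit $[N+2, 2, N-r+1]$ binary codes in the regime $N \in \{3r,3r+1\}$, where Plotkin is no longer contradictory; concrete weight configurations work for small $r$ and extend to general $r$ by direct sums or padding. The hard part, and the main obstacle, is the upper bound $\textup{Ind}(N,N-r) \leq N+2$: ruling out $N+3$ vectors requires more than the raw Plotkin inequality at the boundary, so I would sharpen it by a weight-enumeration analysis on the hypothetical $[N+3, 3, N-r+1]$ code, treating the residues $N \equiv 0, 1 \pmod{3}$ separately (which is exactly where the two-case split in the statement arises). In the spirit of \cite{damelin2004number}, I would combine Plotkin with bounds on the number of extremal-weight codewords to eliminate the remaining configurations.
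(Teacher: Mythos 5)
This theorem is not proved in the paper at all: it is quoted verbatim from \cite{damelin2004number}, so there is no internal proof to compare your argument against. Judged on its own terms, your coding-theoretic translation (via the parity-check correspondence, which is exactly Lemma \ref{lem:codes_ind_vectors} of the paper) is the right framework, and parts (a) and (b) are essentially complete. In (a) the only blemish is that the $2^{N-1}-1$ lines through $v$ partition the remaining $2^N-2$ points of $\mathrm{PG}(N-1,2)$ into \emph{pairs}, not triples (each line contributes two points besides $v$); your count and conclusion $|S|\leq 2^{N-1}$ are nevertheless correct. In (b) both the repetition-code lower bound and the Plotkin contradiction for a putative $[N+2,\geq 2,\geq N-r+1]$ code are sound: with $n=N+2$ and $d\geq N-r+1$ one gets $M\leq 2d/(2d-n)\leq 2(N-r+1)/(N-2r)<4$ precisely when $N\geq 3r+2$, against $M\geq 4$.

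The genuine gap is the upper bound in part (c), which you yourself flag but do not close. Two comments. First, the lower bound should be made explicit rather than waved at: for $N=3r$ split the $N+2=3r+2$ coordinates into blocks of sizes $r+1,r+1,r$ and take the two generators supported on blocks $\{1,2\}$ and $\{1,3\}$, giving nonzero weights $2r+2,2r+1,2r+1\geq 2r+1=N-r+1$; for $N=3r+1$ use three blocks of size $r+1$. Second, and more importantly, your appeal to a ``weight-enumeration analysis'' of a hypothetical $[N+3,\geq 3,\geq N-r+1]$ code is not carried out, and the two residues do not actually behave symmetrically: applying the same Plotkin averaging to $n=N+3$, $M\geq 8$ requires $N>(7r+5)/3$, which holds for all cases of (c) \emph{except} the single pair $(N,r)=(6,2)$, i.e.\ excluding a binary $[9,3,5]$ code. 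That one remaining case is not reachable by Plotkin and needs a separate argument --- the Griesmer bound disposes of it immediately, since $\sum_{i=0}^{2}\lceil 5/2^{i}\rceil=5+3+2=10>9$. Until you either carry out the promised sharpening or substitute such an argument, part (c) is a plan rather than a proof; with that one case patched, the whole proposal becomes a complete and correct (and quite clean) proof of the cited theorem.
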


Part (b) of the above theorem was later generalized  in \cite{damelin2007cardinality}.

\begin{theorem} \textup{\cite{damelin2007cardinality}} \label{thm:q_by_q+1_t_ind}
	Let $ 2 \leq t \leq N $. Then \textup{Ind}$_q(N,t)=N+1  $ if and only if
		\begin{equation*}
			 \frac{q}{q+1}(N+1)\leq t.
		\end{equation*}	 
\end{theorem}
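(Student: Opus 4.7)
The plan is to prove the equivalence by showing (a) $\mathrm{Ind}_q(N,t)\geq N+1$ for all $2\leq t\leq N$ via an explicit construction, and (b) $\mathrm{Ind}_q(N,t)\geq N+2$ if and only if $t<\frac{q(N+1)}{q+1}$. Combining (a) and (b) immediately yields the stated equivalence. For (a), take the $N$ standard basis vectors $e_1,\ldots,e_N\in\mathbb{F}_q^N$ together with their sum $\mathbf{1}=e_1+\cdots+e_N$. Any $t$-subset omitting $\mathbf{1}$ lies in the basis and is independent; any $t$-subset $\{\mathbf{1},e_{i_1},\ldots,e_{i_{t-1}}\}$ is also independent because, since $t-1<N$, there is a coordinate $k\notin\{i_1,\ldots,i_{t-1}\}$ where only $\mathbf{1}$ has a nonzero entry, forcing the coefficient of $\mathbf{1}$ to vanish in any linear relation.

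For (b), I view $N+2$ $t$-independent vectors in $\mathbb{F}_q^N$ as the columns of an $N\times(N+2)$ parity-check matrix $H$ of a linear code $C$ over $\mathbb{F}_q$ of length $N+2$, dimension $k=N+2-\mathrm{rank}(H)\geq 2$, and minimum distance at least $t+1$. Any $2$-dimensional subspace of $C$ is itself a code of length $N+2$ whose minimum distance is at least that of $C$, so existence of $N+2$ $t$-independent vectors is equivalent to the existence of a $2$-dimensional linear $[N+2,2,\geq t+1]_q$ code. Let $G$ be a generator matrix of such a code with nonzero columns $g_1,\ldots,g_{N+2}\in\mathbb{F}_q^2$. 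A nonzero codeword has the form $(\alpha,\beta)G$ with $(\alpha,\beta)\neq 0$, and its Hamming weight equals $(N+2)-|\{i:\alpha g_{1i}+\beta g_{2i}=0\}|$, i.e., $N+2$ minus the number of $g_i$ lying on the line through the origin perpendicular to $(\alpha,\beta)$. Since $\mathbb{F}_q^2$ contains exactly $q+1$ lines through the origin, the minimum distance of $C$ equals $(N+2)-\max_{\ell}|\{i:g_i\in\ell\}|$, where $\ell$ ranges over these $q+1$ lines.

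Therefore a $[N+2,2,\geq t+1]_q$ code exists if and only if the $N+2$ columns $g_i$ can be distributed among the $q+1$ lines of $\mathbb{F}_q^2$ so that each line holds at most $N+1-t$ columns (each line has $q-1$ nonzero vectors, and repetitions are allowed, so within each line there is no capacity issue). Distributing the columns as evenly as possible, this is feasible exactly when $N+2\leq(q+1)(N+1-t)$, which rearranges to $t(q+1)\leq q(N+1)-1$; since $t$ is an integer, this is equivalent to $t<\frac{q(N+1)}{q+1}$. Combining with (a): $\mathrm{Ind}_q(N,t)=N+1$ iff no $N+2$ $t$-independent vectors exist, iff $t\geq\frac{q(N+1)}{q+1}$, which is the claimed equivalence.

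The main conceptual step is (b): recognizing that passing to the $2$-dimensional dual code converts the linear-algebra problem of $t$-independence into the clean projective-geometry question of covering $N+2$ points by the $q+1$ projective lines of $\mathbb{F}_q^2$. Once this reduction is in place, the remaining argument is a pigeonhole estimate together with an algebraic rearrangement to identify the threshold $\frac{q(N+1)}{q+1}$; the reduction from dimension $k\geq 2$ to $k=2$ (by passing to a $2$-dimensional subcode) is what removes any need for induction on $N$ or case analysis on $\mathrm{rank}(H)$.
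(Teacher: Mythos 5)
Your argument is correct. Note first that the paper gives no proof of this statement: it is quoted from \cite{damelin2007cardinality} and used as a black box (e.g.\ throughout Table \ref{tab:Ind_table_codes}), so there is no in-paper argument to compare against; what you have written is a valid self-contained derivation. Part (a) is fine: the set $\{e_1,\dots,e_N,\,e_1+\cdots+e_N\}$ is $t$-independent precisely because $t\le N$ leaves a coordinate that isolates the all-ones vector, so $\textup{Ind}_q(N,t)\ge N+1$ unconditionally. Part (b) is the substantive step and your reduction is sound: $N+2$ $t$-independent vectors in $(\mathbb{F}_q)^N$ are exactly the columns of a parity-check matrix of an $[N+2,\ge 2,\ge t+1]_q$ code, passing to a $2$-dimensional subcode loses nothing, and conversely the full-rank $N\times(N+2)$ parity-check matrix of an $[N+2,2,\ge t+1]_q$ code recovers such a vector set --- this is the same dictionary the paper itself sets up in Lemma \ref{lem:codes_ind_vectors} and Corollaries \ref{cor:code_implies_Ind} and \ref{cor:codes_t_independent}. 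The weight formula $\mathrm{wt}\bigl((\alpha,\beta)G\bigr)=N+2-\vert\{i: g_i\in\ell\}\vert$ over the $q+1$ lines $\ell$ through the origin of $(\mathbb{F}_q)^2$ gives the threshold $N+2\le(q+1)(N+1-t)$, the pigeonhole is tight in both directions (a zero column lies on every line and only hurts), and the integrality rearrangement to $t<q(N+1)/(q+1)$ is right; in effect you have re-derived the $k=2$ case of the Griesmer bound together with its sharpness. Two small points deserve an explicit sentence in a write-up: (i) any subset of size at most $t$ of a $t$-independent set of size at least $t$ is independent, which is what converts ``minimum distance $\ge t+1$'' into ``every $t$ columns independent'' and back; and (ii) the constructed $G$ really has rank $2$, which holds because $N+2>N+1-t$ forces at least two lines to be used.
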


The next two results discuss the upper bounds on Ind$_q(N,t) $ in the case when $t = N$.

\begin{theorem} \textup{\cite{ball2012sets}}
Let $q = p^h$, where $p$ is a prime. Then
\begin{enumerate}
		\item [\textup{(a)}] \begin{equation}
			\textup{Ind}_q(N,N) \leq q+1, \hspace{1cm} if \ N \leq p.
		\end{equation}
		\item [\textup{(b)}] \begin{equation}
			\textup{Ind}_q(N,N) \leq q+N-p, \hspace{1cm} if \ q \geq N \geq p+1 \geq 4.
		\end{equation}
\end{enumerate}
\end{theorem}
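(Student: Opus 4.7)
The plan is to translate the question into projective geometry and then invoke the polynomial-method machinery behind the MDS conjecture. First, I would observe that $N$-independence is preserved under scaling each vector by a nonzero element of $\mathbb{F}_q$, so an $N$-independent set of $k$ nonzero vectors in $(\mathbb{F}_q)^N$ descends to a set of $k$ distinct points in $\text{PG}(N-1, q)$ with the property that no $N$ of them lie in a common hyperplane---an \emph{arc}. Thus $\text{Ind}_q(N, N)$ equals the maximum arc size in $\text{PG}(N-1, q)$. The normal rational curve $\{(1, t, \ldots, t^{N-1}) : t \in \mathbb{F}_q\} \cup \{(0, \ldots, 0, 1)\}$ produces an arc of size $q+1$ (the relevant $N \times N$ Vandermonde determinants are nonzero), so the content of the theorem is the matching upper bound.

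I would next set up the classical MDS duality: an arc of size $n$ in $\text{PG}(N-1, q)$ corresponds to a linear $[n, N]$-MDS code over $\mathbb{F}_q$ whose dual is an $[n, n-N]$-MDS code, giving an arc of the same size in $\text{PG}(n-N-1, q)$. This symmetry allows upper bounds for arcs in different dimensions to transfer, and reduces the analysis to the regime where $N$ is small relative to $n$, which is where the polynomial method is most effective.

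For the upper bound itself I would follow Ball's approach. Suppose for contradiction that an arc $A$ of size at least $q+2$ (respectively $q+N-p+1$) exists. Fix a subset $T \subset A$ of size $N-2$; each pair of remaining points of $A$, together with $T$, determines a hyperplane whose \emph{tangent direction} can be encoded as an element of $\mathbb{F}_q$. A careful double count over these tangent directions yields a polynomial identity (the \emph{lemma of tangents}) that forces a nonzero polynomial of controlled degree to vanish on all $q$ elements of $\mathbb{F}_q$. When $N \leq p$, an ordinary derivative argument produces the contradiction immediately, yielding (a). When $N > p$, ordinary derivatives of order at least $p$ vanish identically in characteristic $p$, and one must replace them with Hasse (divided-power) derivatives; this refinement weakens the resulting degree estimate by exactly $N - p$, yielding (b).

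The main obstacle is the combinatorial-algebraic core of the argument---constructing the correct polynomial identity from the arc and extracting the contradiction via Hasse derivatives once $N > p$. This is precisely the 2012 breakthrough of Ball~\cite{ball2012sets}, and I would cite it directly rather than reproduce the delicate manipulation of higher-order derivatives. Given the dictionary established in the first paragraph, the two stated inequalities then follow as immediate restatements of Ball's bounds on the maximum size of an arc in $\text{PG}(N-1, q)$.
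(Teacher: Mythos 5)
The paper states this theorem without proof, as a direct citation of Ball's result, and your proposal ultimately does the same: after correctly identifying $\textup{Ind}_q(N,N)$ with the maximum size of an arc in $\mathrm{PG}(N-1,q)$ (equivalently, the maximum length of a linear MDS code of dimension $N$ over $\mathbb{F}_q$), you defer the lemma-of-tangents and Hasse-derivative core to \cite{ball2012sets}. Your summary of that machinery is accurate, so your treatment matches the paper's.
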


\begin{theorem} \textup{\cite{ball2012setsII}}
Let $q = p^h$, where $p$ is a prime, and let $N \leq 2p-2$. Then
\begin{enumerate}
		\item [\textup{(a)}] \begin{equation}
			\textup{Ind}_q(N,N) \leq q+2, \hspace{1cm} if \ \textup{$q$ is even and} \ N=3 \ or \ N = q-1.
		\end{equation}
		\item [\textup{(b)}] \begin{equation}
			\textup{Ind}_q(N,N) \leq q+1, \hspace{1cm} otherwise.
		\end{equation}
\end{enumerate}
\end{theorem}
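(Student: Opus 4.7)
The plan is to adapt the polynomial method of Segre, extended to higher dimension by Ball \cite{ball2012setsII}. The first step is to translate into projective geometry: a set $S \subset (\mathbb{F}_q)^N \setminus \{0\}$ that is $N$-independent, with no two vectors proportional, corresponds bijectively to an $n$-arc in $PG(N-1,q)$, that is, a set of $n$ points in general linear position. Since replacing any vector by a scalar multiple preserves $N$-independence, bounding $\textup{Ind}_q(N,N)$ reduces to bounding the maximum size of an arc in $PG(N-1,q)$. The theorem is then the MDS conjecture restricted to the range $N \leq 2p-2$.

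I would proceed in three stages. First, obtain a crude upper bound via Segre's lemma of tangents in higher dimension. For each $(N-2)$-flat $\Pi$ meeting $S$ in exactly $N-2$ points, the pencil of hyperplanes through $\Pi$ is a projective line in the dual space, and the hyperplanes in this pencil meeting $S$ only in $\Pi$ are the tangents at $\Pi$. Writing each hyperplane of the pencil as the zero locus of a linear form $\alpha_H$ on the pencil, one derives a multiplicative identity
\begin{equation*}
\prod_{H \text{ tangent through } \Pi} \alpha_H \;=\; c_\Pi \prod_{H \text{ secant through } \Pi} \alpha_H
\end{equation*}
with $c_\Pi \in \mathbb{F}_q^\times$ determined by $\Pi$. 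Combining these identities over all $(N-2)$-flats that share $N-3$ points of $S$ and exploiting $N$-independence yields an initial bound of roughly $q+N-1$.

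Second, and most delicately, sharpen this to $n \leq q+1$ outside the exceptional cases. This is where the polynomial method proper enters: interpolate a polynomial $F$ on the dual projective space, of degree $t(n-N+1)$ for a small integer $t$, whose zero set contains the dual coordinates of every tangent hyperplane. Bounding the number of zeros of $F$ on a generic line in the dual space against the known number of tangents through that line forces $t$ to be small, and a short combinatorial argument then upgrades this to $n \leq q+1$. The hypothesis $N \leq 2p-2$ enters precisely here: the interpolation relies on binomial coefficients $\binom{n-1}{k}$ being nonzero in $\mathbb{F}_p$, and Lucas' theorem guarantees this only when the relevant index range stays below $p$. This characteristic-$p$ bookkeeping is the main obstacle, since even a single coefficient vanishing modulo $p$ collapses the argument and explains why the bound $2p-2$ is the natural barrier.

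Third, verify that this sharpening is still consistent with the exceptional cases in (a). In characteristic $2$, a conic in $PG(2,q)$ admits a nucleus through which every tangent line passes, giving a $(q+2)$-arc (a hyperoval), and the dual of a normal rational curve of degree $q-1$ yields the analogous construction for $N = q-1$. In both families the polynomial argument outputs the weaker bound $q+2$ rather than $q+1$, so part (a) follows; in every other case within $N \leq 2p-2$, a dimension count on the space of admissible nuclei rules out such extensions and part (b) holds.
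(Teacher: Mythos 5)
First, a point of reference: the paper does not prove this theorem at all --- it is quoted from \cite{ball2012setsII} as a known bound on $\textup{Ind}_q(N,N)$, so there is no internal proof to compare your attempt against. What you have written is an outline of the actual strategy of Ball and De Beule for the MDS conjecture in the range $k\leq 2p-2$: the translation from $N$-independent vector sets to arcs in $PG(N-1,q)$ is correct, the relevance of Segre's lemma of tangents and its higher-dimensional generalization is correctly identified, and the objects named in part (a) (hyperovals in $PG(2,q)$ for $q$ even, and their duals for $N=q-1$) are the right exceptional examples.

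As a proof, however, the proposal has genuine gaps, and they sit exactly where the difficulty of the theorem lies. First, your multiplicative identity at a single $(N-2)$-flat $\Pi$ is not the lemma of tangents: the actual lemma compares the products of tangent forms at \emph{different} flats sharing $N-3$ points of $S$, after a coherent rescaling of coordinates, and making the constants $c_\Pi$ consistent across all such flats is itself a nontrivial induction. Moreover, the ``crude bound roughly $q+N-1$'' does not come from this machinery at all --- it is the elementary bound on arcs obtained by counting the hyperplanes through an $(N-2)$-secant --- so your stage one does no work toward the theorem. Second, the entire content of the result is hidden in stage two: ``a short combinatorial argument then upgrades this to $n\leq q+1$'' conceals the interpolation identity, the degree count, and the verification that the relevant binomial coefficient is nonzero modulo $p$ exactly when $N\leq 2p-2$; none of this is supplied, and it is the part that cannot be reconstructed from the sketch. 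Third, in stage three, exhibiting hyperovals only shows that the bound $q+1$ \emph{fails} in the exceptional cases; one must still prove the upper bound $q+2$ there, and ``a dimension count on the space of admissible nuclei'' is an assertion, not an argument. In short, the proposal is a correct roadmap to the proof in \cite{ball2012setsII}, but not a proof.
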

\noindent The following values of Ind$_q(N,t)$ for $ q = 3$ are listed in \cite{tassa2009proper}:

\begin{itemize}
\item Ind$_3(5,3) = 20$ 
\item Ind$_3(5,4) = 11$
\item Ind$_3(6,3) = 56$
\item Ind$_3(6,4) = 13$
\item Ind$_3(6,5) = 13$
\end{itemize}

Next, we discuss the connection between $t$--independent vectors and the field of coding theory. In the rest of this section, we restrict ourselves to the binary case. 

A \emph{linear code} $C$ of length $n$, dimension $k$ is a subspace of $(\mathbb{F}_q)^n$ of dimension $k$. Let $c_1, c_2 \in C$. The \emph{hamming distance} $d(c_1,c_2)$ between the codewords $c_1$ and $c_2$ is the number of positions at which they differ.  The minimum hamming distance $d$ is defined as follows. 
$$ d = \min\{d(c_1, c_2): c_1, c_2 \in C\}$$
A code with length $n$, dimension $k$, and minimum hamming distance $d$ is called an $[n,k,d]$-code. 
The following result shows the relationship between a set of $t$--independent vectors and a linear $[n,k,d]$-code.

\begin{lemma} \label{lem:codes_ind_vectors}
\textup{\cite{Colbourn:2006:HCD:1202540}}
There exists a linear  $[n,k,d]$-code if and only if there exists a $(n-k)\times n$ matrix $H$ such that any $d-1$ columns of $H$ are linearly independent, but there exists a set of $d$ columns which are not linearly independent.     
\end{lemma}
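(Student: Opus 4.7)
The plan is to use the standard parity-check matrix correspondence. First I would recall the basic fact that, for a linear code $C$, the minimum Hamming distance equals the minimum weight of a nonzero codeword, since $d(c_1,c_2) = \mathrm{wt}(c_1 - c_2)$ and $c_1 - c_2 \in C$. So the task reduces to characterising, in terms of columns of a matrix, the smallest support size of a nonzero vector in the null space of that matrix.

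For the forward direction, suppose $C$ is an $[n,k,d]$-code. I would take $H$ to be a parity-check matrix of $C$, that is, any matrix whose null space is exactly $C$; such an $H$ can be obtained by choosing a basis of the orthogonal complement $C^{\perp}$, which has dimension $n-k$, and using those basis vectors as rows, so $H$ is $(n-k)\times n$ of full row rank. The key observation is then: for any vector $c \in \mathbb{F}_q^n$ with support $S \subseteq \{1,\dots,n\}$, the condition $Hc^{T} = 0$ is the statement that the columns of $H$ indexed by $S$, taken with coefficients $c_i$ for $i \in S$, form a linear dependence. Hence $C$ contains a nonzero word of weight $w$ if and only if some $w$ columns of $H$ are linearly dependent. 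Since the minimum weight of $C$ is $d$, every set of $d-1$ columns of $H$ is linearly independent, and some set of $d$ columns is dependent, as required.

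For the converse, suppose we are given a $(n-k) \times n$ matrix $H$ with the stated column property. Define $C = \{c \in \mathbb{F}_q^n : Hc^{T} = 0\}$; this is a linear subspace. The fact that $H$ has $d-1$ linearly independent columns for some $d \geq 2$ (assuming $d \geq 2$; the edge case $d=1$ is immediate) forces $H$ to have rank $n-k$, so $\dim C = n - \mathrm{rank}(H) = k$. Re-running the same correspondence between dependencies of columns and nonzero codewords, every nonzero element of $C$ has weight at least $d$ (since fewer than $d$ columns are independent), and the linearly dependent set of $d$ columns produces a codeword of weight exactly $d$. Therefore $C$ is an $[n,k,d]$-code.

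The only mild subtlety, and what I would flag as the main place to be careful, is the rank bookkeeping in the converse: one must verify that the column-independence hypothesis forces $H$ to have full row rank $n-k$ (so that $\dim C = k$ rather than something larger), and also check the degenerate cases $d = 1$ and $k = 0$ separately so the statement "any $d-1$ columns are linearly independent" is interpreted correctly (vacuously for $d = 1$). Beyond that, the argument is simply the parity-check translation between codeword supports and column dependencies.
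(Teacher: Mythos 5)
The paper does not prove this lemma at all --- it is quoted with a citation to the Handbook of Combinatorial Designs --- so there is no in-paper argument to compare against; I can only assess your proof on its own terms. Your overall strategy (minimum distance equals minimum weight, and the translation between supports of null-space vectors and linear dependencies among columns) is the standard and correct one, and your forward direction is fine: a weight-$w$ codeword yields a dependence with all coefficients nonzero among the $w$ columns indexed by its support, so minimum weight $d$ gives exactly the stated column conditions for a full-row-rank parity-check matrix of $C^{\perp}$-basis rows.

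The genuine gap is in the converse, at precisely the point you flagged: it is \emph{not} true that the column-independence hypothesis forces $H$ to have rank $n-k$. Independence of every set of $d-1$ columns only gives $\mathrm{rank}(H)\geq d-1$, and $d-1$ can be far smaller than $n-k$. Concretely, over $\mathbb{F}_2$ take $n=4$, $k=1$, $d=2$ and
$H=\left(\begin{smallmatrix}1&1&0&0\\0&0&1&1\\0&0&0&0\end{smallmatrix}\right)$:
every single column is nonzero (so any $d-1=1$ columns are independent) and columns $1,2$ are dependent, yet $\mathrm{rank}(H)=2<3=n-k$, so your $C=\ker H$ has dimension $2$, not $k=1$. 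Hence ``$\dim C=k$'' can fail and the verification you proposed does not go through. The repair is short but needs to be said: the null space $C'$ is an $[n,k',d]$-code with $k'=n-\mathrm{rank}(H)\geq k$ and minimum distance exactly $d$ (your weight bookkeeping for $C'$ is correct: a minimal dependent subset of the given $d$ dependent columns must have size exactly $d$, so the dependence has all coefficients nonzero and produces a word of weight exactly $d$). Now choose a $k$-dimensional subspace $C\subseteq C'$ containing that weight-$d$ word; its minimum weight is at least $d$ and at most $d$, so $C$ is an $[n,k,d]$-code. With that one extra step (or with the convention that $H$ is required to have full row rank, which the statement as written does not impose), your proof is complete.
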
    

\begin{corollary} \label{cor:code_implies_Ind}
If there exists a linear $[n,k,d]$-code, then $\textup{Ind}(n-k,d-1)\geq n$.     
\end{corollary}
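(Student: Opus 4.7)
The plan is to read off the inequality directly from the preceding lemma. Given a linear $[n,k,d]$-code, I would first apply Lemma \ref{lem:codes_ind_vectors} to produce an $(n-k)\times n$ matrix $H$ over $\mathbb{F}_2$ with the property that every set of $d-1$ columns of $H$ is linearly independent. The columns of $H$ are then $n$ vectors in $(\mathbb{F}_2)^{n-k}$, and by construction any subset of size $d-1$ is linearly independent, so these $n$ columns form a $(d-1)$-independent set in $(\mathbb{F}_2)^{n-k}$.

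By the definition of $\textup{Ind}(\cdot,\cdot)$ as the maximum size of a $t$-independent set, exhibiting such a set of size $n$ immediately gives $\textup{Ind}(n-k,d-1)\geq n$, which is the desired bound. There is essentially no obstacle here: the entire content of the corollary is packaged inside Lemma \ref{lem:codes_ind_vectors}, and the proof is a direct translation between the parity-check matrix language of coding theory and the $t$-independence language used earlier in the section. The only small point to note in the write-up is that the columns, rather than the rows, of the parity-check matrix are the vectors playing the role of the $t$-independent set.
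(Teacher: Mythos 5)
Your proposal is correct and is exactly the argument the paper intends: the corollary is stated without proof precisely because it follows immediately from Lemma \ref{lem:codes_ind_vectors} by taking the $n$ columns of the parity-check matrix $H$ as a $(d-1)$-independent set in $(\mathbb{F}_2)^{n-k}$. Your note that the columns (not rows) play the role of the $t$-independent vectors is the right reading of the lemma.
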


\begin{lemma}
If $\textup{Ind}(n-k,d-1)\geq n$, then there exists a linear $[n,k,d']$-code for some $d'\geq d$.  
\end{lemma}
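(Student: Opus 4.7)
The plan is to construct a candidate parity check matrix $H$ whose columns form the $(d-1)$-independent set given by hypothesis, invoke the forward direction of \lref{lem:codes_ind_vectors} to produce a linear code whose minimum distance is at least $d$, and then cut down to dimension exactly $k$ by passing to a subcode.

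First I would fix a set $S$ of $n$ vectors in $(\mathbb{F}_2)^{n-k}$ in which every $d-1$ vectors are linearly independent; such an $S$ exists since $\textup{Ind}(n-k,d-1) \geq n$. Arrange the elements of $S$ as the columns of an $(n-k) \times n$ matrix $H$. If $r = \textup{Rank}(H) < n-k$, delete redundant rows to obtain an $r \times n$ matrix $H'$ of full row rank; elementary row operations preserve linear dependence relations among columns, so every $d-1$ columns of $H'$ remain linearly independent.

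Next, let $C = \{x \in (\mathbb{F}_2)^n : H'x^T = \mathbf{0}\}$. Then $C$ is a linear code of length $n$ and dimension $n-r \geq k$. Since the minimum distance of a linear code equals the size of the smallest linearly dependent subset of columns of any parity check matrix, and every $d-1$ columns of $H'$ are independent, $C$ has minimum distance at least $d$. Finally, choose any $k$-dimensional subspace $C' \leq C$; then $C'$ is a linear $[n,k,d']$-code with $d' \geq d$, because restricting to a subspace cannot decrease the minimum weight.

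The main obstacle is the possibility that $H$ fails to have full row rank, equivalently that the columns of $H$ do not span $(\mathbb{F}_2)^{n-k}$, in which case the null space has dimension strictly greater than $k$ and the construction does not directly give a code of the prescribed dimension. The subcode step at the end circumvents this cleanly, and is the reason the statement only claims $d' \geq d$ rather than $d' = d$.
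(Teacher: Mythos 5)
Your argument is correct. It follows the same opening move as the paper --- take the $n$ vectors of a $(d-1)$-independent set in $(\mathbb{F}_2)^{n-k}$ as the columns of an $(n-k)\times n$ matrix $H$ --- but from there the two proofs diverge in a small, instructive way. The paper defines $d'$ to be the smallest cardinality of a linearly dependent set of columns of $H$ (such a set exists because $n>n-k$), observes that $d'\geq d$ and that every $d'-1$ columns are independent, and then cites \lref{lem:codes_ind_vectors} to produce an $[n,k,d']$-code in one step. You instead re-derive the relevant direction of that lemma by hand: you form the null space of $H$ and read off a lower bound on its minimum distance from the column-independence property. Doing so forces you to confront the rank of $H$, and your row-deletion plus subcode step is a clean way to guarantee dimension exactly $k$ when $H$ is row-rank-deficient; this is a genuine subtlety that the paper's proof absorbs silently into the citation of \lref{lem:codes_ind_vectors}. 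The only cosmetic quibble is that deleting redundant rows is not literally an ``elementary row operation,'' though your justification is sound: the kernel of $H$, and hence the collection of column dependence relations, depends only on the row space, which is unchanged by discarding redundant rows. Both proofs reach the same conclusion; yours is more self-contained, the paper's is shorter.
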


\begin{proof}
Let $\textup{Ind}(n-k,d-1)\geq n$. Then there exists a set of $n$ vectors, each of length $n-k$ such that any subset of $d-1$ vectors is linearly independent. Let $H$ be the $(n-k)\times n$ matrix whose columns are these vectors. Let $d'$ be the smallest integer such that there exists a set of $d'$ linearly dependent columns in $H$. Clearly $d'\geq d$. Moreover, any subset of $d'-1$ columns of $H$ is linearly independent. Thus by Lemma \ref{lem:codes_ind_vectors}, there exists a
linear $[n,k, d']$-code. 
\end{proof}

The following corollary is the contrapositive of the previous lemma. 

\begin{corollary}
Suppose that for all $d'\geq d$, there does not exist a linear $[n,k,d']$-code. Then  $\textup{Ind}(n-k,d-1)< n$.    
\end{corollary}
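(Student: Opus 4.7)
The plan is to note that this corollary is, as the paper itself states, simply the contrapositive of the immediately preceding lemma, so the proof reduces to invoking that lemma under the hypothesis of the corollary. No new mathematical content is required.

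More concretely, I would argue as follows. Assume toward contradiction that $\textup{Ind}(n-k, d-1) \geq n$. By the previous lemma, this produces a linear $[n, k, d']$-code for some $d' \geq d$. But the hypothesis of the corollary is precisely that no linear $[n, k, d']$-code exists for any $d' \geq d$, giving the desired contradiction. Hence $\textup{Ind}(n-k, d-1) < n$.

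There is no genuine obstacle here: the only subtlety is bookkeeping the inequality $d' \geq d$ correctly, so that the conclusion of the preceding lemma is exactly negated by the hypothesis of the corollary. Since the preceding lemma already delivers a code with $d' \geq d$ (not merely $d' = d$), the contrapositive matches without any slack. Therefore the entire proof amounts to a single sentence pointing to the lemma.
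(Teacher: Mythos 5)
Your proposal is correct and matches the paper exactly: the paper itself notes that this corollary is the contrapositive of the preceding lemma, which is precisely the argument you give. The bookkeeping of the quantifier ``for some $d' \geq d$'' versus ``for all $d' \geq d$'' is handled correctly, so nothing further is needed.
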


\begin{corollary} \label{cor:codes_t_independent}
Let $D$ be the maximum value such that a linear $[n,k, D]$-code exists. 
 Then  $\textup{Ind}(n-k,D)< n$.    
\end{corollary}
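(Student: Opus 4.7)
The plan is to obtain this as an immediate application of the preceding corollary (the contrapositive of the lemma), so no new ideas are needed; the only task is to apply it with the correct parameter.

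First I would unpack the hypothesis: since $D$ is the maximum value such that a linear $[n,k,D]$-code exists, for every $d' \geq D+1$ there is no linear $[n,k,d']$-code. Now I would set $d := D+1$ in the previous corollary, which states that if no $[n,k,d']$-code exists for any $d' \geq d$, then $\textup{Ind}(n-k,d-1) < n$. Substituting, this yields $\textup{Ind}(n-k,D) < n$, which is the desired conclusion.

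I do not anticipate any obstacles, since the only content is translating "maximum $D$" into the hypothesis of the earlier corollary. The proof can be written in two or three sentences.

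\begin{proof}
By the maximality of $D$, no linear $[n,k,d']$-code exists for any $d' \geq D+1$. Applying the previous corollary with $d = D+1$ gives $\textup{Ind}(n-k,D) = \textup{Ind}(n-k,(D+1)-1) < n$.
\end{proof}
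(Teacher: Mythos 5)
Your proof is correct and matches the paper's intended route: the paper states this corollary without proof as an immediate consequence of the preceding corollary (the contrapositive of the lemma), and your instantiation $d = D+1$ is exactly the right way to make that deduction explicit.
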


Now we present some known values and bounds on Ind$(N,t)$ for $N \leq 16$ in Table  \ref{tab:Ind_table_codes}.
Since  Ind$(N,2) = 2^N - 1$ and Ind$(N,3) = 2^{N-1} $ (from Theorem \ref{thm:Ind_basic}(a)), we restrict ourselves to  the values  $t \geq 4$.
Let $D(n,k)$ be the maximum value $d$ such that a linear $[n,k,d]$-code exists. 

Most of the results in Table \ref{tab:Ind_table_codes} are using an online repository for linear codes \cite{markus2009codestable} in conjunction with Corollary \ref{cor:code_implies_Ind} and  Corollary \ref{cor:codes_t_independent}. Some are using the above results and \cite{tassa2009proper}. The reasoning for each case is provided in the description. Also, the notation  ``$a$ -- $b$'' in column Ind$(N,t)$ implies $a \leq $ Ind$(N,t) \leq b$.

Let us  compute Ind$(9,4)$ as an example. From \cite{markus2009codestable}, a $[23, 14, 5]$-code exists; by Corollary \ref{cor:code_implies_Ind} this implies Ind$(9,4) \geq 23$. Also from \cite{markus2009codestable},  $D(24, 15) = 4$. Thus  by Corollary \ref{cor:codes_t_independent}, Ind$(9,4) < 24$. Consequently, we have Ind$(9,4) = 23$.  
\renewcommand{\arraystretch}{1.2}
\begin{longtable}[c]{ccc l} 
\kill
\caption[]{Values for Ind$(N,t)$ for $N \leq 16$\label{tab:Ind_table_codes}}\\
\hline\hline
\textbf{$N$} & \hspace{0.5cm} $ \textbf{\textit{t}} $ \hspace{0.5cm} & \textbf{\textup{Ind}$ (N,t) $} &  \textbf{ \hspace{0.6cm} Description}\\
\hline\hline
\endfirsthead
\caption[]{(continued)}\\
\hline\hline
\textbf{$N$} & $ \textbf{\textit{t}} $ & \textbf{\textup{Ind}$ (N,t) $} &  \textbf{ \hspace{0.6cm} Description}\\
\hline \hline
\endhead
        $ 5 $   & $ 4 $   & $ 6 $  & \cite{tassa2009proper} \\*
                & $ 5 $   & $ 6 $  &  Theorem \ref{thm:q_by_q+1_t_ind}\\\midrule
		$ 6 $   & $ 4 $   & $ 8 $& \cite{tassa2009proper} \\*
		 	& $5,6$   & $ 7 $&  Theorem \ref{thm:q_by_q+1_t_ind} \\* \midrule 
		$ 7 $   & $ 4 $   & $ 11 $& \cite{tassa2009proper} \\*
			  & $ 5 $   & $ 9 $&  \cite{tassa2009proper} \\*
			  & $6,7$   & $ 8 $&  Theorem \ref{thm:q_by_q+1_t_ind} \\ \midrule 
		$ 8 $   & $ 4 $   & $ 17 $& \cite{tassa2009proper} \\*
			  & $ 5 $   & 12 &  A $ [12, 4, 6] $-code exists and $D(13,5) = 5$ (\cite{markus2009codestable}).   \\*
			  & $ 6 $   & 9 & Theorem \ref{thm:Ind_basic}(b)  \\*
                & $7,8$  & 9 & Theorem \ref{thm:q_by_q+1_t_ind}  \\ \midrule
		$ 9 $   & $ 4 $   & 23   & A $ [23,14,5] $-code exists and $D(24, 15)= 4 $ (\cite{markus2009codestable}).  \\*
			  & $ 5 $   & 18 & A $ [18,9,6] $-code exists and $D(19,10)= 5$ (\cite{markus2009codestable}).   \\*
			  & $ 6 $   & 11 & A $ [11,2,7] $-code exists and $D(12,3)= 6$ (\cite{markus2009codestable}). \\*
			  & $7,8,9$   & 10 & Theorem \ref{thm:q_by_q+1_t_ind} \\ \midrule
     $ 10 $     & $ 4 $   & 33  & A $ [33,23,5] $-code exists and $D(34,24)= 4$ (\cite{markus2009codestable}). \\*
			& $ 5 $   & 24  & A $ [24,14,6] $-code exists and $D(25,15) = 5$ (\cite{markus2009codestable}).   \\*
			  & $ 6 $   & 15 & A $ [15,5,7] $-code exists and $D(16,6) = 6$ (\cite{markus2009codestable}). \\*
			  & $ 7 $   & 12 & Theorem \ref{thm:Ind_basic}(c) \\*
			  & $8,9,10$  & 11 & Theorem \ref{thm:q_by_q+1_t_ind} \\ \midrule
		$ 11 $ & $ 4 $   & 47 -- 57 & A $ [ 47,36,5] $-code exists and $D(58,47)= 4$ (\cite{markus2009codestable}).  \\*
			   & $ 5 $   & 34  & A $ [34,23,6] $-code exists and $D(35,24)= 5$ (\cite{markus2009codestable}).   \\*
			   & $ 6 $   & 23 & A $ [23,12,7] $-code exists and $D(24,13)= 6$ (\cite{markus2009codestable}). \\*
			   & $ 7 $   & 16 & A $ [16,5,8] $-code exists and $D(17,6)= 7$ (\cite{markus2009codestable}). \\*
			   & $ 8 $   & 12 & Theorem \ref{thm:Ind_basic}(b) \\*
                  & $9,10,11$   & 12 & Theorem \ref{thm:q_by_q+1_t_ind} \\ \midrule
		$ 12 $ & $ 4 $   & 65 -- 88  & A $ [65,53,5] $-code exists and $D(89,77)= 4$ (\cite{markus2009codestable}).   \\*
			   & $ 5 $   & 48 -- 58  & A $ [48,36,6] $-code exists and $D(59,47)= 5$ (\cite{markus2009codestable}).  \\*
			   & $ 6 $   & 24 & A $ [24,12,8] $-code exists and $D(25,13)= 6$ (\cite{markus2009codestable}). \\*
			   & $ 7 $   & 24 & A $ [24,12,8] $-code exists and $D(25,13)= 6$ (\cite{markus2009codestable}). \\*
			   & $ 8 $   & 14 & Theorem \ref{thm:Ind_basic}(c) \\*
			   & $9, \dots, 12$   & 13 & Theorem \ref{thm:q_by_q+1_t_ind} \\ \midrule
      $ 13 $ & $ 4 $   & 81 -- 124  & A $ [81,68,5] $-code exists and $D(125,112)= 4$ (\cite{markus2009codestable}).   \\*
			   & $ 5 $   & 66 -- 89  & A $ [66,53,6] $-code exists and $D(90,77)= 5$ (\cite{markus2009codestable}). \\*
			   & $ 6 $   & 27  & A $ [27,14,7] $-code exists and $D(28,15)= 6$ (\cite{markus2009codestable}). \\*
			   & $ 7 $   & 25 & A $ [25,12,8] $-code exists and $D(26,13)= 7$ (\cite{markus2009codestable}). \\*
			   & $ 8 $   & 15 & A $ [15,2,10] $-code exists and $D(16,3)= 8$ (\cite{markus2009codestable}).  \\*
			   & $ 9 $   & 15 & A $ [15,2,10] $ Theorem \ref{thm:Ind_basic}(c) \\*
			   & $10, \dots, 13$  & 14 & A $ [15,2,10] $ Theorem \ref{thm:q_by_q+1_t_ind} \\ \midrule
      $ 14 $ & $ 4 $   & 128 -- 178 & A $ [128,114,5] $-code exists and $D(179,165)= 4$ (\cite{markus2009codestable}).   \\*
			& $ 5 $   & 82 -- 125 & A $ [82,68,6] $-code exists and $D(126,112)= 5$ (\cite{markus2009codestable}).  \\*
			& $ 6 $   & 31 -- 40  & A $ [31,17,7] $-code exists and $D(41,27)= 6$ (\cite{markus2009codestable}). \\*
			& $ 7 $   & 28  & A $ [28,14,8] $-code exists and $D(29,15)= 7$ (\cite{markus2009codestable}).\\*
			& $ 8 $   & 17 & A $ [17,3,9] $-code exists and $D(18,4)= 8$ (\cite{markus2009codestable}). \\*
			& $ 9 $   & 16 & A $ [16,2,10] $-code exists and $D(17,3)= 9$ (\cite{markus2009codestable}). \\*
			& $10, \dots, 14$  & 15 & Theorem \ref{thm:Ind_basic}(b) \\ \midrule
	 $ 15 $ & $ 4 $   & 151 -- 253 & A $ [151,136,5] $-code exists and $D(254,239)= 4$ (\cite{markus2009codestable}). \\*
			& $ 5 $   & 129 -- 179 & A $ [129,114,6] $-code exists and $D(180,165)= 5$ (\cite{markus2009codestable}). \\*
			& $ 6 $   & 37 -- 53 & A $ [37,22,7] $-code exists and $D(54,39)= 6$ (\cite{markus2009codestable}). \\*
			& $ 7 $   & 32 -- 41 & A $ [32,17,8] $-code exists and $D(42,27)= 7$ (\cite{markus2009codestable}). \\*
			& $ 8 $   & 20 & A $ [20,5,9] $-code exists and $D(21,6)= 8$ (\cite{markus2009codestable}). \\*
			& $ 9 $   & 18 & A $ [18,3,10] $-code exists and $D(19,4)= 9$ (\cite{markus2009codestable}).  \\*
			& $ 10 $  & 17 & Theorem \ref{thm:Ind_basic}(c) \\*
			& $11, \dots, 15$ & 16 & Theorem \ref{thm:q_by_q+1_t_ind} \\\midrule
	 $ 16 $ & $ 4 $   & $ \geq $ 256 & A $ [256,240,5] $-code exists.   \\*
			& $ 5 $   & 152 -- 254 & A $ [152,136,6] $-code exists and $D(255, 239)= 5$ (\cite{markus2009codestable}). \\*
			& $ 6 $   & 47 -- 69 & A $ [47,31,7] $-code exists and $D(70,54)= 6$ (\cite{markus2009codestable}). \\*
			& $ 7 $   & 38 -- 54 & A $ [38,22,8] $-code exists and $D(55,39)= 7$ (\cite{markus2009codestable}). \\*
			& $ 8 $   & 23 & A $ [23,7,9] $-code exists and $D(24,8)= 8$ (\cite{markus2009codestable}). \\*
			& $ 9 $   & 21 & A $ [21,5,10] $-code exists and $D(22,6)= 9$ (\cite{markus2009codestable}).  \\*
			& $ 10 $  & 18 & A $ [18,2,12] $-code exists and $D(19,3)= 10$ (\cite{markus2009codestable}). \\* 
			& $ 11 $  & 18 & Theorem \ref{thm:Ind_basic}(c) \\*
			& $12, \dots, 16$  & 17 & Theorem \ref{thm:q_by_q+1_t_ind} \\ \midrule
\end{longtable}

If we know the value Ind$_q(N,t)$ then by using Corollary \ref{cor:t_indpendent_MOFR} we can construct a set of $k = $ Ind$_q(N,t)$ $t$--orthogonal MOFR$(q^N, q^M; q)$, where $M \geq N$. Though it provides a lower bound that is, in general, not close to the actual upper bound. For example, consider the case when $q = 2, t=3$ and $N =M= 2$. In this case, Ind$(2,3)$ does not exist and hence does not provide a  lower bound. However, by Theorem \ref{thm:Ind_basic}(a) we know Ind$(4,3) = 8$ and one such set is the set $O$ (given below) of vectors of odd weight in $(\mathbb{F}_2)^4$.
$$ O = \{1000, 0100, 0010,0001, 1110, 1101, 1011, 0111\}.$$

Observe that only 4 elements of $O$ satisfy the conditions of Theorem \ref{thm:t_independent_FR_vectors}. Thus we can construct a set of 4--MOFR$(4,4;2)$ that is 3--orthogonal by using these elements in $O$. On the other hand, by inspection, we have found the following set:
$$ W = \{1010, 1001, 1101, 0101, 1110, 0110, 0001, 0010\}$$
which is $3$--independent and has 6 elements that satisfy the conditions of Theorem \ref{thm:t_independent_FR_vectors}. In fact, the first 6 elements listed in $W$ were used to construct 3--orthogonal 6-MOFR$(4,4,2)$ in Example \ref{exp:6_MOFR(4,4,2)}.  

This motivates us to propose the following problem.

\begin{problem} Let $S  \subseteq (\mathbb{F}_q)^{M+N}$. For each set of admissible parameters $t, M, N$, and $q$, determine the maximum size of $S$ such that:
\begin{enumerate}
    \item[\textup{(i)}] $S$ is $t$--independent.
    \item[\textup{(ii)}] For each $ {\bf v} = (v_1, \dots, v_{M+N}) \in S$,   $(v_1, \dots, v_M) \neq (0, \dots, 0) $ and $(v_{M+1}, \dots, v_{M+N}) \neq (0, \dots, 0) $.
\end{enumerate}
\end{problem}

\section{$p-1$ binary MOFS of size $2p$} \label{sec:(p-1)_binary_MOFS}

In this section, our aim is to describe a method to construct a set of $p-1$ mutually orthogonal frequency squares of order $2p$, where $p$ is an odd prime. The construction starts by generating a set of $p-1$ frequency squares which are almost orthogonal. Then we make some small changes in each frequency square in order to make the set orthogonal. Here we set out some notations that we use frequently in this section.

Let $[n] = \{0,1, \dots, n-1\}$, where  $n$ is an integer. Let $H$ be an $m \times n$ array. The rows and columns of $H$ are indexed using $[m]$ and $[n]$. The entry in the intersection of row $i$ and column $j$ of the array $H$ is denoted by $h(i,j)$. As previously, if $H$ is a binary array then $\overline{H}$ is the array obtained from $H$ by interchanging zeroes and ones.

Let $A$ and $B$ be two $m \times n$ binary arrays. We use the notation $\vert AB \vert_{(x,y)},$ where $ x,y \in \{0,1\} $, to denote the total number of ordered pairs $(i,j)$ such that $a(i,j) = x$ and $b(i,j) = y$, that is the total number of ordered pairs of type $(x,y)$ obtained when $A$ and $B$ are superimposed. The term \emph{orthogonality} between the arrays $A$ and $B$ means a sequence that contains the numbers $\vert AB \vert_{(x,y)} $ for all $ x,y \in \{0,1\} $. However, the term \emph{orthogonal} has its usual meaning, i.e., $A$ and $B$ are said to be orthogonal if each type of ordered pair appears the same number of times upon superimposition.

\sloppy Recall that if ${\bf c} = (c_0, c_1, \dots , c_r)$ is a vector then by a cyclic shift of ${\bf c}$ we mean the vector ${\bf c}' = (c_r, c_0, c_1, \dots, c_{r-1})$. Let $p$ be an odd prime. Let ${\bf v} \in (\mathbb{F}_2)^p$ be the vector ${\bf v} = (1,1, \dots, 1, 0, 0, \dots, 0)$ of weight $(p+1)/2$. Let ${\bf v}_i$ denote the vector obtained from ${\bf v}$ by performing $i$ cyclic shifts. Throughout this section $\Omega = \{1, \dots, p-1\}$ and $K = \{1, \dots, \frac{p-1}{2}\}$. We include here some observations related to the vectors ${\bf v}_i$.

\begin{lemma} \label{lem:number_of_pairs}
Let $z \in [(p+1)/2]$. Upon superimposing ${\bf v}_i {\bf v}_j$, the following are equivalent:
\begin{enumerate}
    \item[\textup{(a)}] $\vert {\bf v}_i{\bf v}_j \vert_{(1,0)}=  z$.
    \item[\textup{(b)}] $\vert {\bf v}_i{\bf v}_j \vert_{(0,1)}= z$.
    \item[\textup{(c)}] $\vert {\bf v}_i{\bf v}_j \vert_{(0,0)}= \frac{p-1}{2} - z$.
    \item[\textup{(d)}] $\vert {\bf v}_i{\bf v}_j \vert_{(1,1)}= \frac{p+1}{2} - z$.
\end{enumerate}
\end{lemma}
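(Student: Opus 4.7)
The plan is to prove all four equivalences by a single straightforward weight-counting argument, using only that both $\mathbf{v}_i$ and $\mathbf{v}_j$ are cyclic shifts of $\mathbf{v}$ and therefore have the same Hamming weight $(p+1)/2$ in a vector of length $p$.

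First I would introduce the shorthand $n_{xy} := \vert \mathbf{v}_i \mathbf{v}_j \vert_{(x,y)}$ for $x,y \in \{0,1\}$, and record the three linear relations that these four counts must satisfy. The total number of coordinates gives
\[
n_{00} + n_{01} + n_{10} + n_{11} = p,
\]
counting the $1$'s contributed by $\mathbf{v}_i$ gives
\[
n_{10} + n_{11} = \tfrac{p+1}{2},
\]
and counting the $1$'s contributed by $\mathbf{v}_j$ gives
\[
n_{01} + n_{11} = \tfrac{p+1}{2}.
\]

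Subtracting the last two equations yields $n_{10} = n_{01}$, which immediately establishes (a)$\,\Leftrightarrow\,$(b). Setting $n_{10} = n_{01} = z$ and substituting into the second equation gives $n_{11} = (p+1)/2 - z$, which is (d). Finally substituting all three known values into the first equation gives
\[
n_{00} = p - 2z - \left(\tfrac{p+1}{2} - z\right) = \tfrac{p-1}{2} - z,
\]
which is (c). Since each implication is reversible (the three linear relations uniquely determine the other three counts from any one of them), all four statements are equivalent.

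There is no substantive obstacle here: the lemma is just bookkeeping about two binary vectors of equal weight. The only minor thing worth flagging is the range $z \in [(p+1)/2]$ stated in the lemma, which is automatic since $0 \le n_{10} \le \min(n_{10}+n_{11}, n_{01}+n_{00}+n_{10}) \le (p+1)/2$, and moreover the formula for $n_{00}$ forces $z \le (p-1)/2$, so the constraint is consistent with all four counts being nonnegative integers.
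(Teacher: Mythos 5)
Your proof is correct and is essentially the paper's argument written out in full: the paper's entire proof is the one-line observation that each ${\bf v}_i$ has the same number of zeroes (equivalently, the same weight $(p+1)/2$), from which the three linear relations you record immediately follow. Nothing to fix.
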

\begin{proof}
Each vector contains exactly $(p-1)/2$ zeroes.
\end{proof}

\begin{corollary} \label{cor:commutative_vi_vj}
Superimposing ${\bf v}_i$ and $ {\bf v}_j$ in any order yields the same number of ordered pairs of each type, i.e., $\vert {\bf v}_i{\bf v}_j \vert_{(x,y)} = \vert {\bf v}_j{\bf v}_i \vert_{(x,y)} $.
\end{corollary}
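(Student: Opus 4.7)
The plan is to reduce the commutativity claim directly to Lemma \ref{lem:number_of_pairs}. First I would observe the tautological identity $|{\bf v}_j{\bf v}_i|_{(x,y)} = |{\bf v}_i{\bf v}_j|_{(y,x)}$ for every $x,y \in \{0,1\}$, since swapping the order of superimposition only reverses the two components of each ordered pair at each position. Consequently the ``diagonal'' cases $(x,y) \in \{(0,0),(1,1)\}$ are automatic: both sides count exactly the positions where ${\bf v}_i$ and ${\bf v}_j$ agree on $0$ (resp.\ $1$).

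The only remaining content is therefore the off-diagonal identity $|{\bf v}_i{\bf v}_j|_{(0,1)} = |{\bf v}_i{\bf v}_j|_{(1,0)}$, which combined with the tautological identity immediately gives $|{\bf v}_j{\bf v}_i|_{(0,1)} = |{\bf v}_i{\bf v}_j|_{(1,0)} = |{\bf v}_i{\bf v}_j|_{(0,1)}$ and the symmetric relation for $(1,0)$. But this off-diagonal identity is precisely the equivalence of parts (a) and (b) in Lemma \ref{lem:number_of_pairs}: with $z := |{\bf v}_i{\bf v}_j|_{(1,0)} \in [(p+1)/2]$, the lemma forces $|{\bf v}_i{\bf v}_j|_{(0,1)} = z$ as well.

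I do not anticipate a genuine obstacle here. All of the combinatorial weight has already been absorbed by Lemma \ref{lem:number_of_pairs}, which is itself a one-line consequence of the fact that every cyclic shift ${\bf v}_k$ contains the same number of zeroes, namely $(p-1)/2$. The corollary is essentially a bookkeeping rearrangement: once (a)$\Leftrightarrow$(b) in Lemma \ref{lem:number_of_pairs} is in hand, the symmetry of pair counts under reversal of superimposition order follows without any further case analysis.
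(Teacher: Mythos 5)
Your proposal is correct and matches the paper's (implicit) reasoning: the paper states this corollary without proof as an immediate consequence of Lemma \ref{lem:number_of_pairs}, relying on exactly the observation you make, namely that reversing the order of superimposition swaps the $(0,1)$ and $(1,0)$ counts while fixing the diagonal counts, and the equivalence of parts (a) and (b) of that lemma shows the off-diagonal counts are already equal. Nothing further is needed.
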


\begin{lemma} \label{lem:v0_vi_in_terms_of_i}
For $i \in  [(p+1)/2]$,
\begin{enumerate}
    \item[\textup{(a)}] $\vert {\bf v}_0{\bf v}_i \vert_{(1,0)}=  \vert {\bf v}_0{\bf v}_i \vert_{(0,1)}  = i$.
    \item[\textup{(b)}] $\vert {\bf v}_0{\bf v}_i \vert_{(0,0)}= \frac{p-1}{2} - i$.
    \item[\textup{(c)}] $\vert {\bf v}_0{\bf v}_i \vert_{(1,1)}= \frac{p+1}{2} - i$.
\end{enumerate}
\end{lemma}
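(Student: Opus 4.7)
The plan is to prove part (a) by direct counting, exploiting the very explicit block structure of the vectors, and then deduce parts (b) and (c) from Lemma \ref{lem:number_of_pairs}.

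First I would describe ${\bf v}_i$ explicitly. Since ${\bf v}_0$ consists of $(p+1)/2$ ones in positions $0,1,\dots,(p-1)/2$ followed by $(p-1)/2$ zeros in positions $(p+1)/2,\dots,p-1$, and since we are given $i\in[(p+1)/2]$ so that $i\leq(p-1)/2$, the $i$ cyclic shifts move exactly $i$ zeros from the tail of ${\bf v}_0$ to the front. Thus ${\bf v}_i$ has zeros in positions $0,\dots,i-1$, ones in positions $i,\dots,i+(p-1)/2$, and zeros in positions $i+(p+1)/2,\dots,p-1$. This is the key structural fact driving the proof.

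Next I would count $(1,0)$ pairs. The positions where ${\bf v}_0=1$ are $\{0,1,\dots,(p-1)/2\}$, and the positions where ${\bf v}_i=0$ are $\{0,\dots,i-1\}\cup\{i+(p+1)/2,\dots,p-1\}$. Because $i\leq (p-1)/2<(p+1)/2$, the first block lies entirely inside the $1$-support of ${\bf v}_0$ and the second block is disjoint from it. Hence the intersection has exactly $i$ positions, giving $|{\bf v}_0{\bf v}_i|_{(1,0)}=i$. This is the only calculation in the proof, and the only place one must check the indexing carefully; I expect this to be routine once the block picture is in place.

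Finally, parts (a), (b), (c) follow from Lemma \ref{lem:number_of_pairs}: since $i\leq(p-1)/2<(p+1)/2$, we have $z=i\in[(p+1)/2]$, so the four equivalences give $|{\bf v}_0{\bf v}_i|_{(0,1)}=i$, $|{\bf v}_0{\bf v}_i|_{(0,0)}=(p-1)/2-i$, and $|{\bf v}_0{\bf v}_i|_{(1,1)}=(p+1)/2-i$, as required. The main obstacle is simply the bookkeeping around the indexing of the cyclic shift; once one verifies that $i\leq(p-1)/2$ guarantees the clean three-block form of ${\bf v}_i$, everything else is immediate.
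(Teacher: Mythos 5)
Your proof is correct. The paper states this lemma without proof, and your argument — computing $\vert {\bf v}_0{\bf v}_i \vert_{(1,0)}=i$ directly from the three-block form of ${\bf v}_i$ (valid since $i\leq (p-1)/2$ prevents the ones from wrapping around) and then invoking Lemma~\ref{lem:number_of_pairs} with $z=i$ for the remaining counts — is exactly the routine verification the authors intend.
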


\begin{lemma} \label{lem:v0_vi}
For any $i,j \in [p]$ and $x,y \in \{0,1\}$, we have the following:
\begin{enumerate}
    \item[\textup{(a)}] $\vert {\bf v}_0{\bf v}_i \vert_{(x,y)} = \vert {\bf v}_0{\bf v}_{p-i} \vert_{(x,y)} $.
    \item[\textup{(b)}] $\vert {\bf v}_i{\bf v}_j \vert_{(x,y)} = \vert {\bf v}_0{\bf v}_r \vert_{(x,y)} $, where $r \in [p]$, $ r \equiv j - i \pmod{p}$.
\end{enumerate}
\end{lemma}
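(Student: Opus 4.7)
The plan is to reduce both parts to a single translation-invariance principle: cyclically shifting \emph{both} vectors by the same amount is a rigid rearrangement of the $p$ matched coordinates, and hence does not change the multiset of ordered pairs that appears. Concretely, writing $({\bf v}_i)_k = v_{k-i \bmod p}$, for any integer $s$ the reparameterization $k \mapsto k+s \pmod{p}$ is a bijection of $[p]$ that identifies the pair at position $k$ of $({\bf v}_i, {\bf v}_j)$ with the pair at position $k+s$ of $({\bf v}_{i+s}, {\bf v}_{j+s})$. So
\[
\vert {\bf v}_i {\bf v}_j \vert_{(x,y)} \;=\; \vert {\bf v}_{i+s}\, {\bf v}_{j+s} \vert_{(x,y)}
\]
for all $s$, with indices taken modulo $p$. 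I would record this as a one-line calculation at the start of the proof; it is the only substantive ingredient and there is no real obstacle beyond keeping the index arithmetic straight.

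Given this observation, part (b) is immediate: choose $s \equiv -i \pmod{p}$, so that $({\bf v}_i,{\bf v}_j)$ is identified with $({\bf v}_0, {\bf v}_{j-i \bmod p}) = ({\bf v}_0, {\bf v}_r)$, and the claim follows.

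For part (a), I would specialize (b) to $j = 0$ to obtain $\vert {\bf v}_i {\bf v}_0 \vert_{(x,y)} = \vert {\bf v}_0 {\bf v}_{p-i} \vert_{(x,y)}$, and then invoke Corollary \ref{cor:commutative_vi_vj} to replace the left-hand side by $\vert {\bf v}_0 {\bf v}_i \vert_{(x,y)}$; combining the two equalities gives (a). The boundary case $i = 0$ is trivial under the convention ${\bf v}_p = {\bf v}_0$. Thus both statements drop out of the same shift-invariance together with the already-proved commutativity corollary, and no case analysis on $(x,y)$ or on particular overlap counts is required.
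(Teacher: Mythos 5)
Your proof is correct and rests on exactly the two ingredients the paper cites: the shift-invariance $\vert {\bf v}_i{\bf v}_j\vert_{(x,y)} = \vert {\bf v}_{i+s}{\bf v}_{j+s}\vert_{(x,y)}$ (the paper states the $s=1$ case and leaves the iteration implicit) together with Corollary \ref{cor:commutative_vi_vj}. You have simply written out in full the one-line argument the paper gives, so this is essentially the same proof.
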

\begin{proof}
    This follows from Corollary \ref{cor:commutative_vi_vj} and the observation $\vert {\bf v}_i{\bf v}_j \vert_{(x,y)} = \vert {\bf v}_{i+1}{\bf v}_{j+1} \vert_{(x,y)}  $.
\end{proof}

Now we define a set of $p-1$ arrays each of which consists of a different permutation of the vectors ${\bf v}_i$. These arrays will be our primary building blocks in defining our frequency squares. Formally, for each $\alpha \in \Omega$, let $A_\alpha$ be a $p \times p$ binary array such that for each $i \in [p]$ its row $i$ is ${\bf v}_r$, where $r \in [p]$ is the unique solution to $r \equiv \alpha i \pmod{p} $. The next lemma records the total number of order pairs of each type obtained when these arrays are superimposed.

\begin{lemma} \label{lem:order_pairs_in_A_alpha_A_beta}
Let $\alpha, \beta \in \Omega$ and $ \alpha \neq \beta$. Then:
\begin{enumerate}
    \item[\textup{(a)}]$\vert A_{\alpha} A_{\beta} \vert_{(1,0)} = \vert A_{\alpha} A_{\beta} \vert_{(0,1)} = (p^2 - 1)/4 $.
    \item[\textup{(b)}] $ \vert A_{\alpha} A_{\beta} \vert_{(0,0)} = (p - 1)^2/4 $.
    \item[\textup{(c)}] $ \vert A_{\alpha} A_{\beta} \vert_{(1,1)} = (p + 1)^2/4$.
\end{enumerate}
\end{lemma}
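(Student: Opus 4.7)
The plan is to reduce the two-dimensional counts $|A_\alpha A_\beta|_{(x,y)}$ to a one-dimensional sum over cyclic shifts of ${\bf v}$, and then evaluate that sum using the formulas in \lref{lem:v0_vi_in_terms_of_i}. Observe first that, by construction, row $i$ of $A_\alpha$ is ${\bf v}_{\alpha i \bmod p}$ and row $i$ of $A_\beta$ is ${\bf v}_{\beta i \bmod p}$. Hence superimposing the two arrays row-by-row gives
\[
|A_\alpha A_\beta|_{(x,y)} \;=\; \sum_{i=0}^{p-1} |{\bf v}_{\alpha i}{\bf v}_{\beta i}|_{(x,y)}.
\]
I would then apply \lref{lem:v0_vi}(b) to each summand, rewriting $|{\bf v}_{\alpha i}{\bf v}_{\beta i}|_{(x,y)} = |{\bf v}_0 {\bf v}_{r_i}|_{(x,y)}$ where $r_i \equiv (\beta-\alpha) i \pmod p$.

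The key combinatorial step is that, since $\alpha, \beta \in \Omega$ with $\alpha \neq \beta$, we have $\beta - \alpha \not\equiv 0 \pmod p$; because $p$ is prime, multiplication by $\beta - \alpha$ is a bijection on $[p]$. Thus as $i$ ranges over $[p]$, $r_i$ takes each value in $[p]$ exactly once, giving
\[
|A_\alpha A_\beta|_{(x,y)} \;=\; \sum_{r=0}^{p-1} |{\bf v}_0 {\bf v}_r|_{(x,y)}.
\]

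From here everything is routine arithmetic. Setting $m = (p-1)/2$, I would split the sum into three parts: the term $r=0$ (computed directly from the definition of ${\bf v}$), the terms $r \in \{1,\dots,m\}$ (given by \lref{lem:v0_vi_in_terms_of_i}), and the terms $r \in \{m+1,\dots,2m\}$ (which by \lref{lem:v0_vi}(a) mirror the second part under $r \mapsto p-r$). For instance, for $(1,0)$ the $r=0$ term contributes $0$, and the other two ranges each contribute $\sum_{j=1}^{m} j = m(m+1)/2$, for a total of $m(m+1) = (p-1)(p+1)/4 = (p^2-1)/4$, proving (a) (the same computation works for $(0,1)$ by symmetry). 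For $(0,0)$, the $r=0$ term gives $m$ and each of the other two ranges gives $\sum_{j=0}^{m-1} j = m(m-1)/2$, yielding $m^2 = (p-1)^2/4$; similarly $(1,1)$ yields $(m+1)^2 = (p+1)^2/4$.

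There is no real obstacle here; the only care needed is to verify the bijectivity of $i \mapsto (\beta-\alpha)i \bmod p$ (which uses primality of $p$ together with $\alpha \neq \beta$) and to handle the symmetric split around $r = m$ and the isolated $r = 0$ term consistently. As a sanity check, the four quantities sum to $p^2$, as they must.
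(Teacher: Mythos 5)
Your proposal is correct and follows essentially the same route as the paper: row-by-row reduction to $\sum_i |{\bf v}_{\alpha i}{\bf v}_{\beta i}|_{(x,y)}$, normalization via Lemma \ref{lem:v0_vi}(b), the complete-residue-system observation from $\alpha\neq\beta$ and $p$ prime, and folding the sum around $r=(p-1)/2$ using Lemma \ref{lem:v0_vi}(a) before evaluating with Lemma \ref{lem:v0_vi_in_terms_of_i}. The only cosmetic difference is that you compute the $(1,1)$ count directly while the paper obtains it by subtracting the other three counts from $p^2$.
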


\begin{proof}
When we superimpose $A_{\alpha}A_{\beta}$ the corresponding rows are ${\bf v}_{r_1(i)}{\bf v}_{r_2(i)}$ where $r_1(i),r_2(i) \in [p]$ and  $r_1(i) \equiv \alpha i \pmod{p}$, $r_2(i) \equiv \beta i \pmod{p}$ for each $i \in [p]$. Thus,
\begin{align*}
 \vert A_{\alpha} A_{\beta} \vert_{(x,y)} &= \sum_{i=0}^{p-1} \vert {\bf v}_{r_1(i)} {\bf v}_{r_2(i)} \vert_{(x,y)} \hspace{1cm} r_1(i),r_2(i) \in [p],  \hspace{0.5cm} r_1(i) \equiv \alpha i \pmod{p}, \hspace{0.5cm} r_2(i) \equiv \beta i \pmod{p} \\
 &= \sum_{i=0}^{p-1} \vert {\bf v}_{0} {\bf v}_{r(i)} \vert_{(x,y)} \hspace{1cm} r(i) \in [p], \hspace{0.5cm}  r(i) \equiv (\beta - \alpha)i \pmod{p} \hspace{0.7cm} \textup{(by Lemma \ref{lem:v0_vi}(b))}.
\end{align*}
Since $ \alpha \not\equiv \beta \pmod{p}$,  $\{ r(i): i \in [p] \}$  forms a complete set of residues modulo $p$. Thus by using Lemma \ref{lem:v0_vi}(a) we have:
\begin{equation} \label{eq:A_alphaA_beta}
\vert A_{\alpha} A_{\beta} \vert_{(x,y)} = \sum_{i=0}^{p-1} \vert {\bf v}_{0} {\bf v}_{i} \vert_{(x,y)} = \vert {\bf v}_{0} {\bf v}_{0} \vert_{(x,y)} + 2\sum_{i=1}^{(p-1)/2} \vert {\bf v}_{0} {\bf v}_{i} \vert_{(x,y)}
\end{equation}
Now by using the values of $\vert {\bf v}_{0} {\bf v}_{i} \vert_{(x,y)}$ from Lemma \ref{lem:v0_vi_in_terms_of_i} we get:
$$ \vert A_{\alpha} A_{\beta} \vert_{(1,0)} = \vert A_{\alpha} A_{\beta} \vert_{(0,1)} = 0 + 2\sum_{i=1}^{(p-1)/2}{i} = (p^2 -1)/4,$$
$$ \vert A_{\alpha} A_{\beta} \vert_{(0,0)} = \frac{p-1}{2} + 2\sum_{i=1}^{(p-1)/2}{(\frac{p-1}{2} - i )} = \frac{p(p-1)}{2} - 2\sum_{i=1}^{(p-1)/2}{i} = \frac{(p-1)^2}{4}.$$
Now (c) follows since:
$$ \sum_{x,y \in \{0,1\}} \vert A_{\alpha} A_{\beta} \vert_{(x,y)} = p^2.$$
\end{proof}

\begin{corollary} \label{cor:order_pairs_in_A_alpha_A_beta}
Let $\alpha, \beta \in \Omega$ and $ \alpha \neq \beta$. Then:
\begin{enumerate}
    \item[\textup{(a)}]$\vert \overline{A}_{\alpha} \overline{A}_{\beta} \vert_{(1,0)} = \vert \overline{A}_{\alpha} \overline{A}_{\beta} \vert_{(0,1)} = (p^2 - 1)/4 $.
    \item[\textup{(b)}] $ \vert \overline{A}_{\alpha} \overline{A}_{\beta} \vert_{(0,0)} = (p + 1)^2/4 $.
    \item[\textup{(c)}] $ \vert \overline{A}_{\alpha} \overline{A}_{\beta} \vert_{(1,1)} = (p - 1)^2/4$.
\end{enumerate}
\end{corollary}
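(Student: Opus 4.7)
The plan is to reduce Corollary \ref{cor:order_pairs_in_A_alpha_A_beta} directly to Lemma \ref{lem:order_pairs_in_A_alpha_A_beta} via the simple observation that bit-complementation swaps the roles of $0$ and $1$ in every cell. So for any two binary $m \times n$ arrays $A$ and $B$, and any $x,y \in \{0,1\}$, we have
\[
\vert \overline{A}\,\overline{B}\vert_{(x,y)} \;=\; \vert A B\vert_{(1-x,\,1-y)},
\]
since a cell $(i,j)$ contributes to the pair $(x,y)$ in $\overline{A}\,\overline{B}$ if and only if $\overline{a}(i,j)=x$ and $\overline{b}(i,j)=y$, which is equivalent to $a(i,j)=1-x$ and $b(i,j)=1-y$.

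Applying this bijection to $A = A_\alpha$ and $B = A_\beta$ and then quoting Lemma \ref{lem:order_pairs_in_A_alpha_A_beta} gives each of the three claims: part (a) follows because the lemma already guarantees $\vert A_\alpha A_\beta\vert_{(1,0)} = \vert A_\alpha A_\beta\vert_{(0,1)} = (p^2-1)/4$ (so swapping each index is a no-op here); part (b) follows from $\vert \overline{A}_\alpha\overline{A}_\beta\vert_{(0,0)} = \vert A_\alpha A_\beta\vert_{(1,1)} = (p+1)^2/4$; and part (c) follows from $\vert \overline{A}_\alpha\overline{A}_\beta\vert_{(1,1)} = \vert A_\alpha A_\beta\vert_{(0,0)} = (p-1)^2/4$.

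There is essentially no obstacle here; the corollary is a purely formal consequence of complementation applied to the preceding lemma, and no new combinatorial computation is required. A one-sentence proof will do: state the complementation identity, invoke Lemma \ref{lem:order_pairs_in_A_alpha_A_beta}, and read off the three values.
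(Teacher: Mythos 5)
Your proof is correct and matches the paper's (implicit) reasoning: the paper states this as an immediate corollary of Lemma \ref{lem:order_pairs_in_A_alpha_A_beta} with no written proof, and the complementation identity $\vert \overline{A}\,\overline{B}\vert_{(x,y)} = \vert AB\vert_{(1-x,1-y)}$ is exactly the intended justification. Nothing further is needed.
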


Now let us construct a set of $p-1$ frequency squares. Corresponding to each $\alpha \in \Omega$ we construct a binary frequency square $L_\alpha$ of order $2p$ as follows. 
\begin{equation} \label{eq:L_alpha_1st}
L_\alpha = \left[
\renewcommand{\arraystretch}{1.2}
\begin{array}{c|c}
A_\alpha & \overline{A_\alpha} \\ \hline
\overline{A_\alpha} & A_\alpha \\
\end{array} \right]
\end{equation}

For $\alpha \neq \beta$, by Lemma \ref{lem:order_pairs_in_A_alpha_A_beta} and Corollary \ref{cor:order_pairs_in_A_alpha_A_beta} the total number of ordered pairs $(x,y)$ when $L_{\alpha}$ and $L_{\beta}$ superimposed are as follows:
\begin{equation} \label{eq:L_alpha_L_beta_1st}
\begin{split}
\vert L_{\alpha} L_{\beta} \vert_{(1,0)} = \vert L_{\alpha} L_{\beta} \vert_{(0,1)} = p^2 - 1 \\
\vert L_{\alpha} L_{\beta} \vert_{(0,0)} = \vert L_{\alpha} L_{\beta} \vert_{(1,1)} = p^2 + 1
\end{split}
\end{equation}
Thus we have a set of $p-1$ frequency squares that are almost orthogonal. Now we will make some small changes in order to make these squares pairwise orthogonal. Specifically, in the first and the fourth quadrant, we will flip some entries in a way that some of these arrays will become orthogonal to each other without disrupting the orthogonality between other pairs.

We introduce some more terminology here that we will use often in the rest of this section. 
Let $H_1, \dots, H_n$ be a set of $n$ binary arrays. Let $h_\alpha(i,j)$ denote the entry in the intersection of row $i$ and column $j$ of  $H_\alpha $, where $ i, j \in [n] $. Let $s_\alpha$ be a $2 \times 2$ sub-array of $H_\alpha$. Then the arrays $s_1, \dots, s_n$  are said to be \emph{coincident} if for each $\alpha \in \{1, \dots, n\}$, $s_\alpha$ is of the form:
$$ s_\alpha = \left( \begin{array}{cc} h_\alpha(i,j) & h_\alpha(i,j') \\ h_\alpha(i',j) & h_\alpha(i',j') \end{array} \right), \hspace{1cm} \textup{ for some fixed } i,j,i',j' \in [n] $$
that is, each $s_\alpha$ is a sub-square of $H_\alpha$ and all $s_\alpha$ corresponds to the same positions within $H_\alpha$.

The next lemma describes the effect on the orthogonality between any two binary arrays when we flip the entries of a $2 \times 2$ sub-array of one of them.

\begin{lemma} \label{lem:trades}
Let $A$ and $B$ be two binary arrays of size $m \times n$, where $m,n \geq 2$. Let $s_1$, $s_2$ be two $2 \times 2$ coincident sub-arrays of $A$ and $B$, respectively. Let $B'$ be the array obtained from $B$ in which $s_2$ is replaced by $\overline{s_2}$, where $s_2 = \bigl( \begin{smallmatrix}1 & 0 \\ 0 & 1 \end{smallmatrix} \bigr)$. 
\begin{enumerate}
    \item If $ s_1 = \bigl( \begin{smallmatrix} 1 & 0 \\ 1 & 1 \end{smallmatrix} \bigr)$, then:
    \begin{enumerate}
        \item[\textup{(a)}] $\vert AB'\vert_{(x,y)} = \vert B'A\vert_{(x,y)} = \vert AB\vert_{(x,y)} + 1$, where $x,y \in \{0,1\}$ and $x \neq y$.
        \item[\textup{(b)}] $\vert AB'\vert_{(x,x)} = \vert B'A\vert_{(x,x)}  = \vert AB\vert_{(a,a)} - 1$, where $x \in \{0,1\}$.
    \end{enumerate}
    \item If $ s_1 = \bigl( \begin{smallmatrix} 1 & 0 \\ 1 & 0 \end{smallmatrix} \bigr)$, then $\vert AB'\vert_{(x,y)} = \vert B'A \vert_{(x,y)} = \vert AB\vert_{(x,y)} $, for all $x,y \in \{0,1\}$.
    \item If $B$ is a binary frequency square then $B'$ is a binary frequency square.
\end{enumerate}
\end{lemma}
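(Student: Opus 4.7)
The plan is to reduce the lemma to a purely local computation at the four cells occupied by the coincident sub-arrays $s_1$ and $s_2$, since outside those four positions $B$ and $B'$ agree and therefore contribute identically to $\vert AB\vert_{(x,y)}$ and $\vert AB'\vert_{(x,y)}$. Thus for every pair type $(x,y)\in\{0,1\}^2$,
\begin{equation*}
\vert AB'\vert_{(x,y)} - \vert AB\vert_{(x,y)} \;=\; N'_{(x,y)} - N_{(x,y)},
\end{equation*}
where $N_{(x,y)}$ (respectively $N'_{(x,y)}$) counts how many of the four positions of $s_1$ paired with $s_2$ (respectively with $\overline{s_2}$) give the ordered pair $(x,y)$. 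The symmetry $\vert AB'\vert_{(x,y)}=\vert B'A\vert_{(y,x)}$ is then immediate from the definitions; one only has to be careful that the claim equates $\vert AB'\vert_{(x,y)}$ with $\vert B'A\vert_{(x,y)}$, which is valid here because all the pair counts that appear are symmetric in the two coordinates ($x=y$, or $\{x,y\}=\{0,1\}$ where both off-diagonal entries receive the same increment).

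For Part 1, I would tabulate the four cell pairs in the order (top-left, top-right, bottom-left, bottom-right). With $s_1=\bigl(\begin{smallmatrix}1&0\\1&1\end{smallmatrix}\bigr)$ and $s_2=\bigl(\begin{smallmatrix}1&0\\0&1\end{smallmatrix}\bigr)$, the multiset of pairs from $(s_1,s_2)$ is $\{(1,1),(0,0),(1,0),(1,1)\}$, while from $(s_1,\overline{s_2})$ it is $\{(1,0),(0,1),(1,1),(1,0)\}$. Comparing the two multisets gives exactly $+1$ for each of $(1,0)$ and $(0,1)$ and $-1$ for each of $(0,0)$ and $(1,1)$, which is precisely (a) and (b). (I will also flag the minor typo that (b) should read $\vert AB\vert_{(x,x)}$ rather than $\vert AB\vert_{(a,a)}$.)

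For Part 2, the same local tabulation with $s_1=\bigl(\begin{smallmatrix}1&0\\1&0\end{smallmatrix}\bigr)$ yields pairs $\{(1,1),(0,0),(1,0),(0,1)\}$ from $(s_1,s_2)$ and $\{(1,0),(0,1),(1,1),(0,0)\}$ from $(s_1,\overline{s_2})$; these multisets coincide, so no count changes. For Part 3, I will observe that going from $s_2$ to $\overline{s_2}$ replaces one $1$ by one $0$ in each of the two affected rows $i,i'$ and simultaneously replaces one $0$ by one $1$ in the same rows, so the row sums of $B'$ equal those of $B$; the identical argument applies to columns $j,j'$. Hence every row and column of $B'$ still contains the same number of each symbol as $B$, and $B'$ is a binary frequency square. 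The main obstacle, such as it is, is simply bookkeeping the four cells correctly and being explicit that the equalities $\vert AB'\vert_{(x,y)} = \vert B'A\vert_{(x,y)}$ stated in the lemma hold because the only nonzero changes are symmetric in the two coordinates.
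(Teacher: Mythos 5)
Your proof is correct, and the paper in fact states this lemma without any proof; your reduction to a local tabulation at the four cells of the coincident sub-arrays, together with the row/column-sum observation for part 3, is exactly the routine verification the paper leaves implicit. One caveat: the middle equality $\vert AB'\vert_{(x,y)} = \vert B'A\vert_{(x,y)}$ for $x \neq y$ does not follow from your (correct) computation of the increments, since it amounts to $\vert AB\vert_{(1,0)} = \vert AB\vert_{(0,1)}$, which fails for arbitrary binary arrays; the order-independent statement that actually follows is $\vert B'A\vert_{(x,y)} = \vert BA\vert_{(x,y)} + 1$, and the literal equality in the lemma holds in the paper's application only because the arrays $A_\alpha$ there have equal off-diagonal counts (Lemma \ref{lem:order_pairs_in_A_alpha_A_beta}). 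This is a looseness in the lemma's statement rather than a defect in your argument, but your parenthetical justification (``both off-diagonal entries receive the same increment'') establishes only the symmetry of the increments, not of the totals, so you should either state the order-swapped version or invoke the symmetry of the counts for the specific arrays to which the lemma is applied.
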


In what follows it will be useful to give an explicit formula for the entry of each cell in $A_\alpha$.

\begin{lemma} \label{lem:entry_i_j}
Let $a_{\alpha}(i,j)$ denote the entry in the intersection of row $i$ and column $j$ of $A_{\alpha}$, where $i,j \in [p]$. Then 
$$ a_{\alpha}(i,j) = \begin{cases}
 0 & \textup{if } \hspace{0.5cm} (1-p)/2 \leq j-r \leq -1  \hspace{0.5cm} \textup{or} \hspace{0.5cm}  (p+1)/2 \leq j-r \leq p-1 \\
 1 & otherwise,
 \end{cases} 
 $$
 where $r \equiv \alpha i \pmod{p} \hspace{0.2cm} \textup{and} \hspace{0.2cm} 0 \leq r \leq p-1    $.
\end{lemma}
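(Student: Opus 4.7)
The plan is to verify the formula directly by unpacking the definition of $A_\alpha$ in terms of the shifted vectors ${\bf v}_r$ and then performing a sign case analysis on $j-r$.

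First I would recall that by construction, row $i$ of $A_\alpha$ is the vector ${\bf v}_r$, where $r \in [p]$ is the unique residue satisfying $r \equiv \alpha i \pmod{p}$; hence $a_\alpha(i,j)$ is simply the $j$-th entry of ${\bf v}_r$. From the definition of cyclic shift together with the fact that ${\bf v} = {\bf v}_0$ has ones exactly in positions $0, 1, \dots, (p-1)/2$ and zeros in positions $(p+1)/2, \dots, p-1$, one sees that ${\bf v}_r$ has a $1$ at position $k$ precisely when $(k - r) \bmod p \in \{0, 1, \dots, (p-1)/2\}$, and a $0$ otherwise. Equivalently, $a_\alpha(i,j) = 0$ if and only if $(j - r) \bmod p \in \{(p+1)/2, \dots, p-1\}$.

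The remaining step is just a translation from this modular condition to the literal integer bounds stated in the lemma. Since $j, r \in [p]$, the difference $j-r$ lies in $[-(p-1), p-1]$, and I would split into two cases. If $j \geq r$ then $(j-r) \bmod p = j-r$, and the condition for $a_\alpha(i,j) = 0$ becomes $j-r \in [(p+1)/2, p-1]$, giving the second interval of the lemma. If $j < r$ then $(j-r) \bmod p = j-r+p$, and the condition becomes $j-r+p \in [(p+1)/2, p-1]$, i.e., $j-r \in [-(p-1)/2, -1] = [(1-p)/2, -1]$, giving the first interval. Taking the union of the two cases covers exactly the positions where the entry is $0$, and the complementary range yields the \emph{otherwise} clause with $a_\alpha(i,j) = 1$.

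There is really no substantive obstacle here beyond careful bookkeeping of the two equivalent descriptions of the support of ${\bf v}_r$: as a contiguous block of $(p+1)/2$ indices modulo $p$ on the one hand, and as a disjoint union of one positive and one negative integer interval (the latter accounting for the wrap-around at $0$) on the other. Once the case split on the sign of $j-r$ is handled, the equivalence between the modular and the interval descriptions is immediate, and the lemma follows.
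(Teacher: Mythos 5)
The paper states Lemma \ref{lem:entry_i_j} without proof, and your direct verification is exactly the argument the authors evidently had in mind: row $i$ of $A_\alpha$ is ${\bf v}_r$, the support of ${\bf v}_r$ is the cyclic interval $\{r, r+1, \dots, r+(p-1)/2\} \pmod p$, and splitting on the sign of $j-r$ converts the modular condition into the two stated integer intervals. Your bookkeeping (including the identification of the reduction $(j-r)\bmod p = j-r+p$ when $j<r$, yielding $[(1-p)/2,-1]$) is correct and complete.
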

Now we describe different sets of coincident sub-arrays of $A_\alpha$ that we will use later on to alter the orthogonality between the arrays. Formally, let $a_{\alpha}(i,j)$ denote the entry in the intersection of row $i$ and column $j$ of $A_{\alpha}$, where $i,j \in [p]$. For each $h \in K$ and $\alpha \in \Omega $ we define:
\begin{equation} \label{eq:intercalates}
s_{(\alpha, h)} = \left(
\renewcommand{\arraystretch}{1.5}
\begin{array}{cc}
    a_{\alpha}(0,h) & a_{\alpha}(0,\frac{p-1}{2} + h) \\
    a_{\alpha}(1,h) & a_{\alpha}(1,\frac{p-1}{2} + h)
\end{array} \right)
\end{equation}
Then for each $h \in K$ the set $\{s_{(\alpha, h)}: \alpha \in \Omega \}$ forms a set of coincident sub-arrays of $A_\alpha$. The configuration of each sub-array is given in the following lemma.

\begin{lemma} \label{lem:intercalates}
For each $h,k \in K$ and $\beta \in \Omega \setminus \{h, h+1, \dots, h+\frac{p-1}{2}\}$ we have:
$$
s_{(h, h)} = \left( 
\begin{array}{cc}
    1 & 0 \\
    1 & 1 
\end{array} \right), \hspace{0.2cm}
s_{(h+k, h)} = \left( 
\begin{array}{cc}
    1 & 0 \\
    0 & 1 
\end{array} \right),  \hspace{0.2cm} 
 s_{(\beta, h)} = \left( 
\begin{array}{cc}
    1 & 0 \\
    1 & 0 
\end{array} \right).$$
\end{lemma}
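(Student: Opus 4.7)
The plan is to apply the explicit cell formula from Lemma \ref{lem:entry_i_j} directly to each of the four positions defining $s_{(\alpha, h)}$, and dispose of the three cases by straightforward interval arithmetic on the quantity $j - r$ (where $r \equiv \alpha i \pmod p$).

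First I would verify that the top row of $s_{(\alpha, h)}$ is the same pair $(1, 0)$ in all three target configurations, and in fact does not depend on $\alpha$. Row $0$ of $A_\alpha$ corresponds to shift $r \equiv \alpha \cdot 0 \equiv 0 \pmod p$, so $a_\alpha(0, j) = {\bf v}(j)$. Since $h \in K = \{1, \ldots, (p-1)/2\}$, we have $h \in [0, (p-1)/2]$ and $(p-1)/2 + h \in [(p+1)/2, p-1]$, giving $a_\alpha(0, h) = 1$ and $a_\alpha(0, (p-1)/2 + h) = 0$ regardless of $\alpha$. Thus the entire content of the lemma is concentrated in the two bottom entries.

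For the bottom row, row $1$ has shift $r \equiv \alpha \pmod p$, and since $\alpha \in \Omega = \{1, \ldots, p-1\}$ we may take $r = \alpha$. Lemma \ref{lem:entry_i_j} says that an entry in this row is $0$ iff $j - r \in [-(p-1)/2, -1] \cup [(p+1)/2, p-1]$. I would then handle three disjoint ranges for $\alpha$. When $\alpha = h$, the two values $j - r = 0$ and $j - r = (p-1)/2$ both avoid the $0$-range, so the bottom row is $(1, 1)$. When $\alpha = h + k$ with $k \in K$ (note $h + k \leq p - 1$, so $\alpha$ is still in $\Omega$), the values $-k \in [-(p-1)/2, -1]$ and $(p-1)/2 - k \in [0, (p-3)/2]$ yield the bottom row $(0, 1)$. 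For the third case, $\beta \in \Omega \setminus \{h, h+1, \ldots, h + (p-1)/2\}$ splits into the sub-ranges $\beta \in [1, h-1]$ and $\beta \in [h + (p+1)/2, p-1]$, and I claim the bottom row is $(1, 0)$ in both.

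The one place that deserves genuine care is the sub-range $\beta \in [h + (p+1)/2, p-1]$. Here $h - \beta$ sits in $[-(p-2), -(p+1)/2]$, which looks uncomfortably close to the $0$-range $[-(p-1)/2, -1]$; the key inequality is $h - \beta \leq -(p+1)/2 < -(p-1)/2$, which puts $h - \beta$ strictly outside the $0$-range and so forces the bottom-left entry to be $1$. Symmetrically, $(p-1)/2 + h - \beta$ lies in $[h - (p-1)/2, -1] \subseteq [-(p-1)/2, -1]$, so the bottom-right entry is $0$. The easier sub-range $\beta \in [1, h-1]$ is analogous, with $h - \beta \in [1, (p-3)/2]$ and $(p-1)/2 + h - \beta \in [(p+1)/2, p-2]$. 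Aside from this endpoint bookkeeping at the boundary $\beta = h + (p+1)/2$, every step is a routine substitution into the formula of Lemma \ref{lem:entry_i_j}.
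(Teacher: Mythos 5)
Your proposal is correct and follows the same route as the paper, whose entire proof is the one-line citation ``This follows by Lemma \ref{lem:entry_i_j}''; you have simply carried out the interval checks on $j-r$ that the paper leaves implicit, and your endpoint analysis in the sub-range $\beta \in [h+(p+1)/2,\,p-1]$ is accurate.
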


\begin{proof}
This follows by Lemma \ref{lem:entry_i_j}.
\end{proof}

We next verify that the coincident arrays that we later use do not overlap.  
The following lemma follows from the definition (\ref{eq:intercalates}) of $s_{(\alpha, h)}$.

\begin{lemma} \label{lem:s_alphas_dont_overlap}
Let $h, h' \in K$ such that $h \neq h'$. Then 
for each $\alpha\in \Omega$,
 the set of cells in 
 $s_{(\alpha, h)}$
 is disjoint from the set of cells in $s_{(\alpha, h')}$. 
\end{lemma}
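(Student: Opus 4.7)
The plan is to verify the disjointness directly from the definition (\ref{eq:intercalates}). For every $\alpha\in\Omega$ and every $h\in K$, the sub-array $s_{(\alpha,h)}$ occupies exactly the four cells in rows $0$ and $1$ and columns $h$ and $(p-1)/2+h$ of $A_\alpha$. In particular, every $s_{(\alpha,h)}$ uses the same two rows, so $s_{(\alpha,h)}$ and $s_{(\alpha,h')}$ share a cell if and only if they share a column index. It therefore suffices to show that the four column indices $h$, $(p-1)/2+h$, $h'$, $(p-1)/2+h'$ are pairwise distinct whenever $h\neq h'$ with $h,h'\in K$.

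I would then split these four indices into the ``low'' pair $\{h,h'\}\subseteq\{1,\dots,(p-1)/2\}$ and the ``high'' pair $\{(p-1)/2+h,\ (p-1)/2+h'\}\subseteq\{(p+1)/2,\dots,p-1\}$. The two ranges are disjoint, so no low index can equal a high index. Within the low pair $h\neq h'$ is the hypothesis, and within the high pair the equality $(p-1)/2+h=(p-1)/2+h'$ again forces $h=h'$, a contradiction. Hence all four column indices are distinct and the two cell sets are disjoint.

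The argument is purely a range check and I do not foresee any obstacle; indeed, the choice $K=\{1,\dots,(p-1)/2\}$ is exactly what guarantees that $h$ and $(p-1)/2+h$ fall on opposite sides of the middle column, and any larger index set would produce collisions. Thus the lemma reduces to the observation that the defining formula for $s_{(\alpha,h)}$ partitions its two columns into one ``low'' and one ``high'' half, with the halves indexed injectively by $h\in K$.
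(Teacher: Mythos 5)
Your proof is correct and is exactly the computation the paper intends: the paper itself gives no explicit argument, merely asserting that the lemma ``follows from the definition (\ref{eq:intercalates})'', and your range check on the column indices $h$ and $(p-1)/2+h$ is the obvious way to make that assertion precise. Nothing further is needed.
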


Consider the set $\{A_\alpha : \alpha \in \Omega \}$ in the first quadrant of the arrays $L_\alpha$ (given in \ref{eq:L_alpha_1st}). By Lemma \ref{lem:intercalates} and Lemma \ref{lem:trades}, for each  $h \in K$, if we replace $s_{(h+k, h)}$ with $\overline{s}_{(h+k,h)}$  
for all $k\in K$, 
the array $L_h$ becomes orthogonal to each array in the set $\{L_{h+1}, L_{h+2}, \dots, L_{h+(p-1)/2}\}$, while the orthogonality between rest of the arrays remains unchanged. However, this does not cover the whole spectrum of the pairs which are subsets of $\{L_\alpha: \alpha \in \Omega\}$. Our next step is to apply similar transformations to the arrays $A_\alpha$ in the fourth quadrant of $L_\alpha$. To ensure that we are covering the whole spectrum without repetitions, we will use the analogy of a complete graph. That is we want to establish a one-to-one correspondence between the edges of a complete graph with $p-1$ vertices such that each vertex represents a unique frequency square and an edge between two vertices implies that the two arrays are orthogonal.
We start with the following result from graph theory.
\begin{lemma}
Let $G_{p-1}$ be the complete graph with vertex set $ V = \{\infty \} \cup \{0,1, \dots, p-3\}$. Let $S_i$ be the star with edge-set $\{\{i,\infty\}, \{i, i +1\}, \{i, i+2\}, \dots, \{i, i + \frac{p-3}{2} \} \}$ (working modulo $(p-2)$ with residues in $\{0,1, \dots, p-3\}$). Then $\{S_1, S_2, \dots, S_{p-2} \}$ is a partition of the edge set of $G_{p-1}$.
\end{lemma}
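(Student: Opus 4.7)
My plan is to establish the partition through a double count combined with a disjointness check. First I would observe that the complete graph $G_{p-1}$ has $\binom{p-1}{2} = (p-1)(p-2)/2$ edges, while each star $S_i$ contains exactly $(p-1)/2$ edges (one edge to $\infty$ together with $(p-3)/2$ edges to finite vertices). Since there are $p-2$ stars in total, the combined edge count $(p-2)(p-1)/2$ agrees with $|E(G_{p-1})|$, so it will suffice to prove that the edge sets $E(S_i)$ are pairwise disjoint.

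To show disjointness, I would split the edges of $G_{p-1}$ into those incident to $\infty$ and those lying entirely among the finite vertices $\{0, 1, \dots, p-3\}$. For the first kind, $S_i$ contributes only the single edge $\{i, \infty\}$, which clearly varies with $i$, so no overlap occurs among the edges incident to $\infty$. The main work is the second case.

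For an edge $\{a, b\}$ lying in $\{0, 1, \dots, p-3\}$ and appearing in both $S_i$ and $S_j$ with $i \neq j$, I would write (after possibly swapping $a$ and $b$) $a = i$, $b \equiv i + k \pmod{p-2}$ with $k \in \{1, \dots, (p-3)/2\}$, and dually $b = j$, $a \equiv j + \ell \pmod{p-2}$ with $\ell \in \{1, \dots, (p-3)/2\}$. Adding the two congruences yields $k + \ell \equiv 0 \pmod{p-2}$. But $2 \le k + \ell \le p - 3 < p - 2$, so no such congruence can hold, giving the desired contradiction.

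The only delicate point is the numerical bound $k + \ell \le p - 3 < p - 2$, which relies on $p - 2$ being odd (guaranteed since $p$ is an odd prime). This is precisely what prevents the edge $\{i, i + k\}$ from being expressible as $\{j, j + \ell\}$ from a different base vertex, and it is also why the half-range $\{1, \dots, (p-3)/2\}$ and its negation $\{(p-1)/2, \dots, p-3\}$ form a disjoint partition of $\{1, \dots, p-3\}$ in $\mathbb{Z}/(p-2)\mathbb{Z}$. Beyond this observation, the argument is the standard starlet decomposition of the complete graph of even order, and I anticipate no further obstacles.
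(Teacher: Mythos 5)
Your proof is correct. Note that the paper states this lemma without any proof at all (it is the standard star decomposition of a complete graph of even order), so there is no argument of the authors' to compare against; your edge count $(p-2)\cdot\frac{p-1}{2}=\binom{p-1}{2}$ combined with the pairwise-disjointness check is exactly the verification the paper leaves implicit. The two points worth having made explicit are handled correctly in your write-up: the centers of $S_1,\dots,S_{p-2}$ are the distinct residues $1,\dots,p-3,0$ modulo $p-2$, so the edges to $\infty$ cannot collide; and for a finite edge lying in both $S_i$ and $S_j$ the relation $k+\ell\equiv 0 \pmod{p-2}$ with $2\le k+\ell\le p-3$ is impossible, which is where the oddness of $p$ (hence of $p-2$) enters.
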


Next, we relabel the vertices of the graph $G_{p-1}$ by the mapping $f: V \rightarrow \Omega$ defined as follows:
$$ f(z) = \begin{cases}
 (p+1)/2 & \textup{if } \hspace{0.5cm} z = \infty \\
 p-1 & \textup{if } \hspace{0.5cm} z = 0 \\
 z & \textup{if } \hspace{0.5cm} 1 \leq z \leq (p-1)/2 \\
 z + 1 & \textup{if } \hspace{0.5cm} (p+1)/2 \leq z \leq p-3.
 \end{cases} 
 $$
The relabelling of vertices by using $f$ is shown in the figure given below.

\begin{table}[H]
\begin{minipage}[h]{.42\textwidth}
\centering
\begin{tikzpicture}  
  [scale=1,auto=center,every node/.style={circle,fill=blue!10, minimum size=10mm}, roundnode/.style={circle, fill=none } ] 

  \node (a1) at (3,-6) {1};  
  \node (a2) at (4.5,-5.5)  {2};
  \node (a3) at (5.7,-4.2)  {3};  
  \node (a4) at (5.9,-2.5)  {4};
  \node (ap-3) at (5.2,-0.9)  {$\frac{p-3}{2}$};
  \node (ap-1) at (3.85,-0.15)  {$\frac{p-1}{2}$};
  \node (ap+3)  at (2.15,-0.15)  {$\frac{p+1}{2}$} ;
  \node (ap+5) at (0.8,-0.9)  {$\frac{p+3}{2}$};
  \node (ap+7) at (0.1,-2.5)  {$\frac{p+5}{2}$};
  \node (p-2) at (0.3,-4.2)  {\small{$p-3$}};
  \node (p-1) at (1.5,-5.5)  {\small{$0$}};
  \node (a12) at (3,-3)  {\small{$\infty$}};
    
  
  \path (a4) -- node[auto=false, fill=none, sloped]{\ldots} (ap-3);
  \path (ap+7) -- node[auto=false, fill=none, sloped]{\ldots} (p-2);

\end{tikzpicture}
\caption*{$V$}
\end{minipage}
\begin{minipage}[h]{.15\textwidth}
\centering
\begin{tikzpicture}  
  [scale=1,auto=center,every node/.style={circle,fill=blue!10, minimum size=10mm}, roundnode/.style={circle, fill=none } ] 

   \node[auto=false, fill=none] (A)  at (1.15,-0.15)  {} ;
   \node[auto=false, fill=none] (B)  at (3.15,-0.15)  {} ;

   \draw[->] (A) -- (B) node[fill=none, auto=false, midway, above] {$f$};

\end{tikzpicture}
\end{minipage}  
\begin{minipage}[h]{.42\textwidth}
\centering
\begin{tikzpicture}  
  [scale=1,auto=center,every node/.style={circle,fill=blue!10, minimum size=10mm}, roundnode/.style={circle, fill=none } ] 

  \node (a1) at (3,-6) {1};  
  \node (a2) at (4.5,-5.5)  {2};
  \node (a3) at (5.7,-4.2)  {3};  
  \node (a4) at (5.9,-2.5)  {4};
  \node (ap-3) at (5.2,-0.9)  {$\frac{p-3}{2}$};
  \node (ap-1) at (3.85,-0.15)  {$\frac{p-1}{2}$};
  \node (ap+3)  at (2.15,-0.15)  {$\frac{p+3}{2}$} ;
  \node (ap+5) at (0.8,-0.9)  {$\frac{p+5}{2}$};
  \node (ap+7) at (0.1,-2.5)  {$\frac{p+7}{2}$};
  \node (p-2) at (0.3,-4.2)  {\small{$p-2$}};
  \node (p-1) at (1.5,-5.5)  {\small{$p-1$}};
  \node (a12) at (3,-3)  {\small{$\frac{p+1}{2}$}};

  
  \path (a4) -- node[auto=false, fill=none, sloped]{\ldots} (ap-3);
  \path (ap+7) -- node[auto=false, fill=none, sloped]{\ldots} (p-2);

\end{tikzpicture} \caption*{$\Omega$}
\end{minipage}
\caption*{Figure 1 }
\label{fig:1}
\end{table}

For each star:
$$S_i =  \{\{i,\infty\}, \{i, i +1\}, \{i, i+2\}, \dots, \{i, i + (p-3)/2 \} \},$$
we define 
$$f(S_i) = \{\{f(i),f(\infty)\}, \{f(i), f(i+1)\}, \{f(i), f(i+2)\}, \dots, \{f(i), f(i + (p-3)/2) \} \}.$$

Then the sets of stars $S_i$ for $1 \leq i \leq (p-1)/2$ are transformed as described in the next lemma.

\begin{lemma} \label{lem:fS_1_upto_(p-1)/2}
For $1 \leq i \leq (p-1)/2$, $f(S_i) = \{\{i, i + 1\}, \{i, i +2\},  \dots, \{i, i + (p-1)/2 \} \}$.
\end{lemma}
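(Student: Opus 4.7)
The plan is to analyze the image under $f$ of each edge of $S_i$ and show that the resulting edge set is exactly $\{\{i,i+1\},\{i,i+2\},\ldots,\{i,i+(p-1)/2\}\}$. Since $1\leq i\leq (p-1)/2$, the definition of $f$ gives $f(i)=i$, so every edge $\{i,v\}\in S_i$ maps to $\{i,f(v)\}$; it therefore suffices to verify that $\{f(v):\{i,v\}\in S_i\}$ equals the target $T_i:=\{i+1,i+2,\ldots,i+(p-1)/2\}$.

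First I would dispense with the edge $\{i,\infty\}$: it yields $f(\infty)=(p+1)/2$, and from $1\leq i\leq (p-1)/2$ one has $i+1\leq (p+1)/2\leq i+(p-1)/2$, so $(p+1)/2\in T_i$. For the remaining edges $\{i,i+k\}$ with $k\in\{1,\ldots,(p-3)/2\}$ and arithmetic modulo $p-2$ with residues in $\{0,1,\ldots,p-3\}$, I would observe that $i+k\leq (p-1)/2+(p-3)/2=p-2$, so reduction occurs only when $i+k=p-2$, which forces $i=(p-1)/2$ and $k=(p-3)/2$.

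The body of the proof is then a three-way case analysis on where $i+k$ lies. If $i+k\leq (p-1)/2$, then $f(i+k)=i+k$; if $(p+1)/2\leq i+k\leq p-3$, then $f(i+k)=i+k+1$; if $i+k=p-2$, then the reduced residue is $0$ and $f(0)=p-1=i+(p-1)/2$. Collecting contributions as $k$ varies over $\{1,\ldots,(p-3)/2\}$ and appending $f(\infty)=(p+1)/2$, one obtains
$$\{i+1,\ldots,(p-1)/2\}\;\cup\;\{(p+1)/2\}\;\cup\;\{(p+3)/2,\ldots,i+(p-1)/2\}\;=\;T_i.$$
A cardinality check $|S_i|=(p-1)/2=|T_i|$ together with the fact that $f$ is a bijection $V\to\Omega$ rules out any collision in the union.

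The main (modest) obstacle is bookkeeping at the extreme values $i=1$ (where the third case is vacuous) and $i=(p-1)/2$ (where the first case is vacuous and the third case is active). I would handle these by stating the range of $k$ appearing in each case in terms of $i$ rather than relying on a single uniform formula, and then observing that the three (possibly empty) consecutive intervals of output values, together with the singleton $\{(p+1)/2\}$ contributed by $\infty$, partition $T_i$.
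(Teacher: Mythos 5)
Your proof is correct and follows essentially the same route as the paper: compute $f$ on each vertex of the star $S_i$ via a case split on whether the vertex is $\infty$, lies in $\{i+1,\dots,(p-1)/2\}$, or lies in $\{(p+1)/2,\dots,p-3\}\cup\{0\}$, and observe the images assemble into $\{i+1,\dots,i+(p-1)/2\}$. The only difference is that you make explicit the wraparound case $i+k=p-2$ (forcing $i=(p-1)/2$) and the degenerate endpoints $i=1$ and $i=(p-1)/2$, which the paper handles implicitly in its transformation table.
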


\begin{proof}
Observe that it is true for $i = 1 $. Now for $2 \leq i \leq (p-1)/2$, $S_i$ transforms as follows:
$$
\renewcommand{\arraystretch}{1.5}
\begin{array}{c ccc c}
 S_i & &\xrightarrow{f}&& f(S_i) \\
\{i, i+1\} & &&& \{i, i+1\} \\
\{i, i+2\} & &&& \{i, i+2\} \\
\vdots & &&& \vdots \\
\{i, (p-1)/2\} & &&& \{i, (p-1)/2 \} \\
\{i, \infty \} & &&& \{i, (p+1)/2 \} \\
\{i, (p+1)/2\} & &&& \{i, (p+3)/2 \} \\
\vdots & &&& \vdots \\
\{i, i + (p-3)/2\} & &&& \{i, i + (p-1)/2 \} \\
\end{array}
$$
Thus we have the required result.
\end{proof}

Consider the following permutation on the set $\Omega$.
$$ \rho(z) = \begin{cases}
 z & \textup{if } \hspace{0.5cm} z = (p+1)/2 \\
 1 & \textup{if } \hspace{0.5cm} z = (p-1)/2 \\
 (z + \frac{p+1}{2}) \pmod{p} & otherwise.
 \end{cases} 
 $$
 
By the definition of $f$ and $\rho$, the remaining set of stars,  $f(S_i)$ where $ (p+1)/2 \leq i \leq p-2$ can be expressed as follows.

\begin{lemma} \label{lem:rho_f_Si}
For $1 \leq i \leq (p-3)/2$, 
$  f(S_{i+ (p-1)/2}) = \rho(f(S_i))$.
\end{lemma}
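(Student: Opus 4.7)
The plan is to verify $f(S_{i + (p-1)/2}) = \rho(f(S_i))$ by computing the centre and the leaf set of each star and showing they coincide.

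First, I compare the centres. The centre of $f(S_{i+(p-1)/2})$ is $f(i+(p-1)/2)$. For $1 \le i \le (p-5)/2$ the index $i+(p-1)/2$ lies in $\{(p+1)/2,\dots,p-3\}$, so $f(i+(p-1)/2) = i + (p+1)/2$. For $i=(p-3)/2$ the index reduces to $p-2 \equiv 0 \pmod{p-2}$, giving $f(0) = p-1$. In either subcase, since $1 \le i \le (p-3)/2$ the value $i$ avoids both exceptional classes for $\rho$, so $\rho(i) = (i + (p+1)/2) \bmod p$, which evaluates to $i+(p+1)/2$ when $i \le (p-5)/2$ and to $p-1$ when $i=(p-3)/2$. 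The centres therefore match.

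Next I compare leaf sets. By Lemma \ref{lem:fS_1_upto_(p-1)/2}, the leaves of $\rho(f(S_i))$ are $\{\rho(i+k) : 1 \le k \le (p-1)/2\}$. The value $k = (p+1)/2 - i$ yields $\rho((p+1)/2) = (p+1)/2$ and the value $k = (p-1)/2 - i$ yields $\rho((p-1)/2) = 1$. For the remaining values of $k$ the leaf is $(i+k) + (p+1)/2 \bmod p$, and splitting into the subcases $i+k<(p-1)/2$ and $i+k > (p+1)/2$ produces leaves filling $\{i+(p+3)/2,\dots,p-1\}$ and $\{2,\dots,i\}$ respectively. Collecting,
\begin{equation*}
\text{leaves of } \rho(f(S_i)) = \{(p+1)/2\} \cup \{1,\dots,i\} \cup \{i+(p+3)/2, \dots, p-1\}.
\end{equation*}

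On the other side, the non-$\infty$ leaves of $S_{i+(p-1)/2}$ form the set $\{(i+(p-1)/2+k) \bmod (p-2) : 1 \le k \le (p-3)/2\}$, which traversed cyclically is $\{i+(p+1)/2, \dots, p-3\} \cup \{0,1,\dots,i\}$. Applying $f$ to these vertices together with $\infty$ gives $f(\infty)=(p+1)/2$, $f(z)=z+1$ for $z$ in the first piece (producing $\{i+(p+3)/2,\dots,p-2\}$), $f(0)=p-1$, and $f(z)=z$ for $z \in \{1,\dots,i\}$. The union is the same set, which together with the matching centre gives the equality.

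The main obstacle is bookkeeping: $\rho$ is an additive shift by $(p+1)/2 \bmod p$ except at two special points, $f$ is identity-like except at $0$ and $\infty$, and the two cyclic wraps (modulo $p$ for $\rho$ and modulo $p-2$ for $S_j$) must align precisely. The boundary case $i=(p-3)/2$, where $S_{i+(p-1)/2}$ is centred at the vertex $0$, needs separate attention throughout.
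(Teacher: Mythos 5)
Your argument is the same direct bookkeeping verification as the paper's: the paper tabulates the edges of $S_j$ under $f$ against the edges of $f(S_i)$ under $\rho$, while you split each star into its centre and its leaf set, which amounts to the same computation, and for $1 \leq i \leq (p-5)/2$ your calculation checks out. The one place the write-up does not go through literally is the boundary case $i = (p-3)/2$, which you flag but never carry out. There $S_{i+(p-1)/2} = S_{p-2}$ is centred at the vertex $0$, so its non-$\infty$ leaves are $\{1, \dots, (p-3)/2\}$ rather than your stated $\{i+(p+1)/2,\dots,p-3\} \cup \{0, 1, \dots, i\}$ (the first piece is empty and $0$ is the centre, not a leaf); applying $f(0) = p-1$ as your text prescribes would then introduce a spurious leaf $p-1$ with no counterpart on the $\rho(f(S_i))$ side, where $\{i + (p+3)/2, \dots, p-1\}$ is empty. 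The repair is one line --- for $i = (p-3)/2$ both centres equal $p-1$ and both leaf sets reduce to $\{(p+1)/2\} \cup \{1, \dots, (p-3)/2\}$, so the lemma still holds --- but as submitted this case is only announced as needing separate attention rather than verified. (The paper's own table is likewise written generically, with degenerate rows becoming vacuous, so this is a defect of presentation rather than of method.)
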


\begin{proof}

Let $j = i + (p-1)/2$, where $1 \leq i \leq (p-3)/2$. Then $(p+1)/2 \leq j \leq p-2$. We want to show that $f(S_j) = \rho(f(S_i))$. By using the definitions of $f$ and $\rho$ the transformations are as follows:
$$
\renewcommand{\arraystretch}{1.5}
\begin{array}{ccc c ccc}
 S_j  & \xrightarrow{f} \ & f(S_j) & \ \hspace{0.5cm} \   & \rho(f(S_i)) & \xleftarrow{\rho} \ & f(S_i) \\
\{j, \infty\} & & \{j+1, (p+1)/2\}  &=& \{j+1, (p+1)/2\}&& \{i, (p+1)/2\} \\
\{j, j+1\} & & \{j+1, j+2\} &=& \{j+1, i+1 + (p+1)/2\}&& \{i, i+1\}  \\
\vdots & & \vdots && \vdots && \vdots  \\
\{j, p-3\} & & \{j+1, p-2\} &=& \{j+1, (p-5)/2 + (p+1)/2\} && \{i, (p-5)/2\}  \\
\{j, 0\} & & \{j+1, p-1\} &=& \{j+1, (p-3)/2 + (p+1)/2\} && \{i, (p-3)/2\} \\
\{j, 1\} & & \{j+1, 1\}  &=& \{j+1, 1\}&& \{i, (p-1)/2\} \\
\{j, 2\} & & \{j+1, 2\}  &=& \{j+1, (p+3)/2 + (p+1)/2 \}&& \{i, (p+3)/2\} \\
\vdots & & \vdots && \vdots && \vdots  \\
\{j, j + (p-3)/2\} && \{j+1, j + (p-3)/2\}  &=& \{j+1, i \} && \{i, i + (p-1)/2\} \\
\end{array}
$$
Hence the result.
\end{proof}

By combining Lemma \ref{lem:fS_1_upto_(p-1)/2} and Lemma \ref{lem:rho_f_Si} we have the following result. 

\begin{lemma} \label{lem:partition_graph}
    The set of stars $ \{f(S_i):  i \in K\} \cup \{ \rho(f(S_i)): 1 \leq i \leq (p-3)/2 \}  $ partitions the edge-set of the complete graph $G_{p-1}$ with the vertex set $\Omega$.
\end{lemma}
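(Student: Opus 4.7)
The plan is to observe that this lemma is essentially a bookkeeping exercise: it just repackages a partition that already exists. First I would recall the earlier graph-theoretic lemma in this section, which asserts that $\{S_1,S_2,\dots,S_{p-2}\}$ is a partition of the edge set of the complete graph $G_{p-1}$ on the vertex set $V = \{\infty\}\cup\{0,1,\dots,p-3\}$. Since $f \colon V \to \Omega$ is a bijection between vertex sets of the same cardinality, applying $f$ to the vertex labels of every edge in every star preserves the partition: the family $\{f(S_1), f(S_2), \dots, f(S_{p-2})\}$ is a partition of the edge set of the complete graph $G_{p-1}$ on the new vertex set $\Omega$.

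Next I would split the index set $\{1,2,\dots,p-2\}$ into the two halves $K = \{1,2,\dots,(p-1)/2\}$ and $\{(p+1)/2, (p+3)/2, \dots, p-2\}$. For each $j$ in the second half, write $j = i + (p-1)/2$ with $1 \le i \le (p-3)/2$, and apply Lemma \ref{lem:rho_f_Si} to replace $f(S_j)$ by $\rho(f(S_i))$. This gives the identity of families
\[
\{f(S_1), f(S_2), \dots, f(S_{p-2})\} = \{f(S_i) : i \in K\} \cup \{\rho(f(S_i)) : 1 \le i \le (p-3)/2\},
\]
and combining this with the previous paragraph yields the claimed partition.

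The only thing one has to be careful about is whether any edge could be counted twice after rewriting the second half through $\rho$. However, this rewriting is carried out index by index via the per-index identity $f(S_{i+(p-1)/2}) = \rho(f(S_i))$ from Lemma \ref{lem:rho_f_Si}, so the two presentations list exactly the same $p-2$ stars; since the original family $\{f(S_1),\dots,f(S_{p-2})\}$ partitions $E(G_{p-1})$, no overlap is introduced. Hence no genuine obstacle arises, and the proof reduces to quoting Lemma \ref{lem:fS_1_upto_(p-1)/2} and Lemma \ref{lem:rho_f_Si} as indicated and noting that $f$ is a bijection on vertices.
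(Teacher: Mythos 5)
Your proposal is correct and follows exactly the route the paper intends: the paper states the lemma as an immediate consequence of combining Lemma \ref{lem:fS_1_upto_(p-1)/2} and Lemma \ref{lem:rho_f_Si} with the earlier partition of $E(G_{p-1})$ into the stars $S_1,\dots,S_{p-2}$, and your write-up simply makes explicit the two steps (relabelling by the bijection $f$, then rewriting the stars indexed by $(p+1)/2 \leq j \leq p-2$ via $\rho$) that the paper leaves implicit.
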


Now consider the set of sub-arrays $\{s_{(\alpha, h)}: \alpha \in \Omega, h \in K\}$ defined in (\ref{eq:intercalates}). Let $\{A_\alpha^\ast: \alpha \in \Omega\}$ be the set of arrays obtained by replacing $s_{(h+k, h)}$  with $\overline{s}_{(h+k, h)}$ in $A_{h+k}$ for each $h, k \in K.$
Then we have the following.

\begin{lemma} \label{lem:A_alpha_stars}
Let $\{A_\alpha^\ast: \alpha \in \Omega\}$ be the set of arrays described above.
\begin{enumerate}
    \item If $\{\alpha, \beta\} \in f(S_i)$ for some $ 1 \leq i \leq (p-1)/2 $, then:
    \begin{enumerate}
        \item[\textup{(a)}] $\vert A_\alpha^\ast A_\beta^\ast \vert_{(x,y)} = \vert A_\alpha A_\beta\vert_{(x,y)} + 1$, where $x,y \in \{0,1\}$ and $x \neq y$.
        \item[\textup{(b)}] $\vert A_\alpha^\ast A_\beta^\ast \vert_{(x,x)} = \vert A_\alpha A_\beta\vert_{(x,x)} - 1$, where $x \in \{0,1\}$.
    \end{enumerate}
    \item Otherwise: $\vert A_\alpha^\ast A_\beta^\ast \vert_{(x,y)} = \vert A_\alpha A_\beta\vert_{(x,y)} $.
\end{enumerate}
\end{lemma}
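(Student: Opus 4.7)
My plan is to do a cell-by-cell comparison of $\vert A_\alpha^{\ast}A_\beta^{\ast}\vert_{(x,y)}$ with $\vert A_\alpha A_\beta\vert_{(x,y)}$, decomposing the change according to the index $h \in K$ of the altered sub-array. For each $\gamma \in \Omega$ write $S(\gamma) = \{h \in K : \gamma - h \in K\}$; then by construction $A_\gamma^{\ast}$ differs from $A_\gamma$ precisely on the cells of $s_{(\gamma,h)}$ as $h$ ranges over $S(\gamma)$, and these altered sub-arrays are mutually disjoint within a single $A_\gamma^{\ast}$ by Lemma \ref{lem:s_alphas_dont_overlap}. Consequently the change in $\vert \cdot \vert_{(x,y)}$ decomposes as a sum of contributions indexed by $h \in S(\alpha) \cup S(\beta)$, which I analyse one $h$ at a time.

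Fix such an $h$. If $h \in S(\alpha) \cap S(\beta)$, then by Lemma \ref{lem:intercalates} both $s_{(\alpha,h)}$ and $s_{(\beta,h)}$ equal $\bigl(\begin{smallmatrix}1&0\\0&1\end{smallmatrix}\bigr)$ and both are flipped to $\bigl(\begin{smallmatrix}0&1\\1&0\end{smallmatrix}\bigr)$; a direct four-cell enumeration shows that before and after the flip the contribution to the superimposition is two pairs $(0,0)$ and two pairs $(1,1)$, so the net change in every $\vert \cdot \vert_{(x,y)}$ is zero. If instead $h \in S(\alpha) \setminus S(\beta)$, then only $A_\alpha^{\ast}$ is altered at these cells, and since $h \in K$ but $\beta - h \notin K$, Lemma \ref{lem:intercalates} forces $s_{(\beta,h)} = \bigl(\begin{smallmatrix}1&0\\1&1\end{smallmatrix}\bigr)$ when $\beta = h$ and $s_{(\beta,h)} = \bigl(\begin{smallmatrix}1&0\\1&0\end{smallmatrix}\bigr)$ otherwise (the middle configuration in that lemma being ruled out by $h \notin S(\beta)$). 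Lemma \ref{lem:trades} then yields a change of $+1$ on off-diagonal pairs and $-1$ on diagonal pairs in the first situation and no change in the second; the case $h \in S(\beta) \setminus S(\alpha)$ is entirely symmetric.

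Because $0 \notin K$, one has $\gamma \notin S(\gamma)$ for every $\gamma$, so the only $h$ producing a nonzero contribution is $h = \beta$ (requiring $\beta \in K$ and $\alpha - \beta \in K$, hence $\alpha > \beta$) or $h = \alpha$ (requiring $\alpha \in K$ and $\beta - \alpha \in K$, hence $\beta > \alpha$). These alternatives are mutually exclusive and each pins down a single value of $h$, so at most one $h$ contributes, and exactly one does so iff $\min(\alpha,\beta) \in K$ and $\vert \alpha - \beta \vert \in K$. By Lemma \ref{lem:fS_1_upto_(p-1)/2}, this condition is precisely $\{\alpha,\beta\} \in f(S_i)$ for some $i \in \{1,\dots,(p-1)/2\}$, in which case the net change is $+1$ on off-diagonals and $-1$ on diagonals (part (1)); otherwise every $h$ contributes zero (part (2)). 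The main obstacle is the simultaneous-flip case $h \in S(\alpha) \cap S(\beta)$, which Lemma \ref{lem:trades} does not cover directly and which must be verified by the explicit four-cell computation; once that is in hand, the rest is a clean application of Lemma \ref{lem:trades} combined with the parametrisation of pairs in $\Omega$ by the stars $f(S_i)$.
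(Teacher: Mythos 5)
Your proof is correct and takes essentially the same route as the paper, whose entire proof is the single line ``This follows by Lemma \ref{lem:intercalates}, Lemma \ref{lem:s_alphas_dont_overlap} and Lemma \ref{lem:trades}''; you have simply written out in full the decomposition over $h$ that those three lemmas are meant to supply. Your explicit four-cell check of the simultaneous-flip case $h \in S(\alpha)\cap S(\beta)$ is a worthwhile addition, since Lemma \ref{lem:trades} as stated only treats a flip in one of the two arrays and the paper leaves that case implicit.
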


\begin{proof}
This follows by Lemma \ref{lem:intercalates}, Lemma \ref{lem:s_alphas_dont_overlap} and Lemma \ref{lem:trades}.
\end{proof}
Now we define another set of arrays $\{A_\alpha': \alpha \in \Omega\}$ that we will use in the fourth quadrant of our final arrays. Formally, let $\{A_\alpha': \alpha \in \Omega\}$ be the set of arrays obtained by replacing $s_{(h+k, h)}$  with $\overline{s}_{(h+k, h)}$ in $A_{h+k}$ for each $h \in \{ 1,2, \dots, \frac{p-3}{2} \}$ and $ k \in K.$ Observe that the array $A_\alpha' = A_\alpha^\ast $ for $\alpha \in \Omega \setminus \{(p+1)/2, \dots, p-1 \}$ and for $\alpha \in \{(p+1)/2, \dots, p-1 \}$, the only difference between $A_\alpha' $ and $ A_\alpha^\ast$ is that the array $A_\alpha'$ contains $s_{(\alpha, h)} $ and $  A_\alpha^\ast$ contains $\overline{s}_{(\alpha, h)} $ for $h = (p-1)/2$. Thus we have a similar result as Lemma \ref{lem:A_alpha_stars} for the set $\{A_\alpha': \alpha \in \Omega\}$.

\begin{lemma}\label{lem:A_alpha_primes}
Let $\{A_\alpha': \alpha \in \Omega\}$ be the set of arrays described above.
\begin{enumerate}
    \item If $\{\alpha, \beta\} \in f(S_i)$ for some $ 1 \leq i \leq (p-3)/2 $, then:
    \begin{enumerate}
        \item[\textup{(a)}] $\vert A_\alpha' A_\beta' \vert_{(x,y)} = \vert A_\alpha A_\beta\vert_{(x,y)} + 1$, where $x,y \in \{0,1\}$ and $x \neq y$.
        \item[\textup{(b)}] $\vert A_\alpha' A_\beta' \vert_{(x,x)} = \vert A_\alpha A_\beta\vert_{(x,x)} - 1$, where $x \in \{0,1\}$.
    \end{enumerate}
    \item Otherwise: $\vert A_\alpha' A_\beta' \vert_{(x,y)} = \vert A_\alpha A_\beta\vert_{(x,y)} $.
\end{enumerate}
\end{lemma}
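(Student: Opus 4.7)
The plan is to mirror the argument underlying Lemma \ref{lem:A_alpha_stars}, the only adjustment being that the flip index $h$ now runs over $\{1,\ldots,(p-3)/2\}$ rather than all of $K$, so the pairs $\{\alpha,\beta\}$ that see a change are exactly those in $f(S_h)$ for $h$ in this restricted range. Fix $\alpha,\beta\in\Omega$ with $\alpha\neq\beta$. Lemma \ref{lem:s_alphas_dont_overlap} guarantees that the sub-arrays $s_{(\gamma,h)}$ for different $h\in K$ occupy disjoint cells, so the total discrepancy between $\vert A_\alpha' A_\beta'\vert_{(x,y)}$ and $\vert A_\alpha A_\beta\vert_{(x,y)}$ decomposes as an independent sum over $h\in K$ of local contributions.

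At a single position $h$, Lemma \ref{lem:intercalates} classifies each of $s_{(\alpha,h)},s_{(\beta,h)}$ as one of the three forms $T_1=\bigl(\begin{smallmatrix}1 & 0\\1 & 1\end{smallmatrix}\bigr)$, $T_2=\bigl(\begin{smallmatrix}1 & 0\\0 & 1\end{smallmatrix}\bigr)$, $T_3=\bigl(\begin{smallmatrix}1 & 0\\1 & 0\end{smallmatrix}\bigr)$, and the flipping rule for $A_\gamma'$ is: $s_{(\gamma,h)}$ is flipped iff it has type $T_2$ and $h\in\{1,\ldots,(p-3)/2\}$. I would then run through the nine type-pairings. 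Lemma \ref{lem:trades} handles $(T_1,T_2)$, $(T_2,T_1)$, $(T_3,T_2)$, $(T_2,T_3)$ directly, while the pairings involving no $T_2$ clearly contribute nothing. The only pairing with a non-zero local deviation is $(T_1,T_2)$ or $(T_2,T_1)$, where the contribution has magnitude $1$ with the sign pattern of Lemma \ref{lem:trades}(1). Such an $h$ exists for the pair $\{\alpha,\beta\}$ iff (WLOG $\alpha<\beta$) $\alpha=h\in\{1,\ldots,(p-3)/2\}$ and $\beta-\alpha\in K$, i.e., iff $\{\alpha,\beta\}\in f(S_h)$ with $h\in\{1,\ldots,(p-3)/2\}$ (by Lemma \ref{lem:fS_1_upto_(p-1)/2}). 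Lemma \ref{lem:partition_graph} ensures $h$ is unique when it exists, so the local contributions cannot stack, giving parts (1) and (2) simultaneously.

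The only genuine obstacle is the $(T_2,T_2)$ pairing, which is not covered by Lemma \ref{lem:trades} because that lemma only describes the effect of flipping a single sub-array and its case split assumes $s_1\in\{T_1,T_3\}$. Here both $s_{(\alpha,h)}$ and $s_{(\beta,h)}$ are simultaneously replaced by their complements, and a direct four-cell calculation is needed to confirm that the multiset $\{(1,1),(0,0),(0,0),(1,1)\}$ contributed by $T_2$ vs $T_2$ equals the multiset contributed by $\overline{T_2}$ vs $\overline{T_2}$, hence produces no net change. This verification, together with the (also quick) direct check that flipping $T_2$ against $T_3$ permutes but does not change the four ordered pairs, closes the only remaining gap in the parallel with Lemma \ref{lem:A_alpha_stars}.
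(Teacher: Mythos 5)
Your proposal is correct and follows essentially the same route as the paper, which proves the analogous Lemma \ref{lem:A_alpha_stars} by combining Lemma \ref{lem:intercalates}, Lemma \ref{lem:s_alphas_dont_overlap} and Lemma \ref{lem:trades}, and then simply asserts that restricting the flip index $h$ to $\{1,\dots,(p-3)/2\}$ yields the present statement. Your explicit treatment of the $\bigl(\begin{smallmatrix}1&0\\0&1\end{smallmatrix}\bigr)$-versus-$\bigl(\begin{smallmatrix}1&0\\0&1\end{smallmatrix}\bigr)$ double-flip case is a genuine (and correct) addition, since Lemma \ref{lem:trades} as stated only covers flipping a sub-array in one of the two arrays; the paper leaves this case implicit.
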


Now we give our main result.

\begin{theorem}
Let $p \geq 3$ be a prime. Then there exists a set of $p-1$ binary MOFS of order $2p$.
\end{theorem}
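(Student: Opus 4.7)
The plan is to assemble the modified arrays $A_\alpha^\ast$ and $A_\alpha'$ produced in Lemmas \ref{lem:A_alpha_stars} and \ref{lem:A_alpha_primes}, using the permutation $\rho$ to cover the full edge-partition supplied by Lemma \ref{lem:partition_graph}. Concretely, for each $\alpha \in \Omega$ I would set
\begin{equation*}
L_\alpha = \left[
\renewcommand{\arraystretch}{1.2}
\begin{array}{c|c}
A_\alpha^\ast & \overline{A_\alpha} \\ \hline
\overline{A_\alpha} & A_{\rho^{-1}(\alpha)}' \\
\end{array}\right].
\end{equation*}
First I would verify that each $L_\alpha$ is a binary frequency square of order $2p$. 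Every $2\times 2$ flip of type $\bigl(\begin{smallmatrix}1 & 0 \\ 0 & 1\end{smallmatrix}\bigr) \leftrightarrow \bigl(\begin{smallmatrix}0 & 1 \\ 1 & 0\end{smallmatrix}\bigr)$ preserves both row and column sums, so $A_\alpha^\ast$ and $A_{\rho^{-1}(\alpha)}'$ each inherit the row- and column-sum $(p+1)/2$ of the underlying $A_\gamma$. Pairing every modified block with a complementary $\overline{A_\alpha}$ block then yields rows and columns of weight exactly $p$ throughout $L_\alpha$.

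Next I would compute $\vert L_\alpha L_\beta \vert_{(x,y)}$ quadrant by quadrant for $\alpha \neq \beta$. The two off-diagonal blocks contribute $2\vert \overline{A_\alpha}\,\overline{A_\beta}\vert_{(x,y)}$, and together with the unadjusted diagonal contributions they reproduce the totals in \eref{eq:L_alpha_L_beta_1st}: $p^2-1$ for mixed pairs and $p^2+1$ for matching pairs. The top-left quadrant contributes $\vert A_\alpha^\ast A_\beta^\ast\vert_{(x,y)}$, which by Lemma \ref{lem:A_alpha_stars} shifts these counts by $+1$ on mixed and $-1$ on matching pairs precisely when $\{\alpha,\beta\} \in f(S_i)$ for some $i \in K$. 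The bottom-right quadrant contributes $\vert A_{\rho^{-1}(\alpha)}' A_{\rho^{-1}(\beta)}'\vert_{(x,y)}$, which by Lemma \ref{lem:A_alpha_primes} applies the identical shift precisely when $\{\rho^{-1}(\alpha),\rho^{-1}(\beta)\} \in f(S_i)$ for some $1 \leq i \leq (p-3)/2$, equivalently when $\{\alpha,\beta\} \in \rho(f(S_i))$.

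The key step is then Lemma \ref{lem:partition_graph}: the families $\{f(S_i): i \in K\}$ and $\{\rho(f(S_i)): 1 \leq i \leq (p-3)/2\}$ together partition the edge-set of the complete graph on vertex set $\Omega$. Consequently every pair $\{\alpha,\beta\}$ is hit by exactly one of the two diagonal adjustments, yielding $\vert L_\alpha L_\beta\vert_{(x,y)} = (p^2-1) + 1 = p^2$ on mixed pairs and $(p^2+1) - 1 = p^2$ on matching pairs, so $L_\alpha$ and $L_\beta$ are orthogonal. This gives the desired $p-1$ binary MOFS of order $2p$.

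The main delicacy is the insertion of $\rho^{-1}$ in the bottom-right quadrant: this is what shifts the adjusted edge set from $f(S_i)$ to $\rho(f(S_i))$ so that the two diagonals cover disjoint halves of the partition, without spoiling either the frequency-square property or the off-diagonal counts. Both are insensitive to this relabeling because Lemma \ref{lem:order_pairs_in_A_alpha_A_beta} and the row/column sums of $A_\gamma$ depend only on $\gamma$ being a generic element of $\Omega$ rather than on its specific value. Everything else is a direct assembly of the lemmas already established in the section.
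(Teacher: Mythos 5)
Your proposal is correct and follows essentially the same route as the paper: the paper's proof defines the identical block arrays $F_\alpha$ with $A_\alpha^\ast$, $\overline{A}_\alpha$, and $A_{\rho^{-1}(\alpha)}'$, and splits into the same two cases according to whether $\{\alpha,\beta\}$ lies in some $f(S_i)$ or in some $\rho(f(S_i))$, invoking Lemma \ref{lem:partition_graph} to conclude that exactly one quadrant adjusts the counts to $p^2$. Your explicit verification of the frequency-square property is a small addition the paper leaves implicit via Lemma \ref{lem:trades}.
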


\begin{proof}
Corresponding to each $\alpha \in \Omega$ we construct a binary frequency square $L_\alpha$ of order $2p$ as follows. 
\begin{equation} \label{eq:L_alpha}
F_\alpha = \left[
\renewcommand{\arraystretch}{1.2}
\begin{array}{c|c}
\hspace{0.3cm} A_\alpha^\ast \hspace{0.4cm} \ & \overline{A}_{\alpha} \\ \hline
\overline{A}_{\alpha} & A_{\rho^{-1}(\alpha)}' \\
\end{array} \right]
\end{equation}
Let $\alpha,\beta \in \Omega $. Let $\alpha' = \rho^{-1}(\alpha)$ and $\beta' = \rho^{-1}(\beta)$. Then, for all $x,y \in \{0,1\}$:
\begin{equation} \label{eq:L_alpha_L_beta}
\begin{split}
\vert F_{\alpha} F_{\beta} \vert_{(x,y)} = \vert A_\alpha^\ast A_{\beta}^\ast \vert_{(x,y)} + 2 \vert \overline{A}_{\alpha} \overline{A}_{\beta} \vert_{(x,y)} + \vert A_{\alpha'}' A_{\beta'}' \vert_{(x,y)}
\end{split}
\end{equation}
Now consider the following two cases:\\
   \textbf{Case I:} $\{\alpha, \beta\} \in f(S_i)$ for some $i \in K$.
   Then, by Lemma \ref{lem:partition_graph}, $\{\alpha, \beta\} \not \in \rho(f(S_i))$ for all $i \in \{1, \dots, (p-3)/2 \}$. This implies $\{\rho^{-1}(\alpha), \rho^{-1}(\beta)\} \not \in f(S_i)$ for all $i \in \{1, \dots, (p-3)/2 \}$. Thus by Lemma \ref{lem:A_alpha_stars} and Lemma \ref{lem:A_alpha_primes} we have:
$$
\renewcommand{\arraystretch}{1.5}
\begin{array}{cl}
\vert A_\alpha^\ast A_{\beta}^\ast \vert_{(x,y)}  = \vert A_\alpha A_{\beta} \vert_{(x,y)} + 1 & \textup{ whenever } x \neq y \\
\vert A_\alpha^\ast A_{\beta}^\ast \vert_{(x,y)} = \vert A_\alpha A_{\beta} \vert_{(x,y)} - 1 & \textup{ when } x = y \\
\vert A_{\alpha'}' A_{\beta'}' \vert_{(x,y)}  = \vert A_{\alpha'} A_{\beta'} \vert_{(x,y)} = \vert A_\alpha A_{\beta} \vert_{(x,y)} & \textup{ for all } x,y \in \{0,1\}.
\end{array}
$$

        Therefore, by using Lemma \ref{lem:order_pairs_in_A_alpha_A_beta} and Corollary \ref{cor:order_pairs_in_A_alpha_A_beta} in (\ref{eq:L_alpha_L_beta}) we get:
        $$\vert F_{\alpha} F_{\beta} \vert_{(x,y)} = p^2 \hspace{0.5cm} \textup{ for all } x,y \in \{0,1\}.$$

    \textbf{Case II:} $\{\alpha, \beta\} \not \in f(S_i)$ for all $i \in K$. 
    Then, by Lemma \ref{lem:partition_graph}, $\{\alpha, \beta\} \in \rho(f(S_i))$ for some $i \in \{1, \dots, (p-3)/2 \}$. This implies $\{\rho^{-1}(\alpha), \rho^{-1}(\beta)\} \in f(S_i)$ for some $i \in \{1, \dots, (p-3)/2 \}$. Thus by Lemma \ref{lem:A_alpha_stars} and Lemma \ref{lem:A_alpha_primes} we have:
$$
\renewcommand{\arraystretch}{1.5}
\begin{array}{cl}
\vert A_{\alpha'}' A_{\beta'}' \vert_{(x,y)}  = \vert A_{\alpha'} A_{\beta'} \vert_{(x,y)} + 1 = \vert A_\alpha A_{\beta} \vert_{(x,y)} + 1 & \textup{ whenever } x \neq y \\
\vert A_{\alpha'}' A_{\beta'}' \vert_{(x,y)}  = \vert A_{\alpha'} A_{\beta'} \vert_{(x,y)} - 1 = \vert A_\alpha A_{\beta} \vert_{(x,y)} - 1 & \textup{ when } x = y \\
\vert A_\alpha^\ast A_{\beta}^\ast \vert_{(x,y)}  = \vert A_\alpha A_{\beta} \vert_{(x,y)} & \textup{ for all } x,y \in \{0,1\}.
\end{array}
$$
             
        Therefore, again by using Lemma \ref{lem:order_pairs_in_A_alpha_A_beta} and Corollary \ref{cor:order_pairs_in_A_alpha_A_beta} in (\ref{eq:L_alpha_L_beta}) we get:
        $$\vert F_{\alpha} F_{\beta} \vert_{(x,y)} = p^2 \hspace{0.5cm} \textup{ for all } x,y \in \{0,1\}.$$

This completes the proof.
\end{proof}

Next, we include an example here to further illustrate the construction. 

\begin{example}
Let us construct a set of 6 binary MOFS of order 14. Here $p = 7$, $\Omega = \{ 1, \dots, 6 \}$, $K = \{1, 2, 3\}$ and ${\bf v} = (1,1,1,1,,0,0,0)$. The set of $A_\alpha$ for $\alpha \in \Omega$ is given in Table \ref{tbl:A_alpha_6MOFS(14)}. 

\begin{table}[H]
\centering
\begin{small}
\begin{tabular}{|cccc ccc|}
\hline
1&	1\cellcolor[gray]{0.8}&	1&	1&	0\cellcolor[gray]{0.8}&	0&	0\\
0&	1\cellcolor[gray]{0.8}&	1&	1&	1\cellcolor[gray]{0.8}&	0&	0\\
0&	0&	1&	1&	1&	1&	0\\
0&	0&	0&	1&	1&	1&	1\\
1&	0&	0&	0&	1&	1&	1\\
1&	1&	0&	0&	0&	1&	1\\
1&	1&	1&	0&	0&	0&	1\\\hline
\multicolumn{7}{c}{$A_1$} \\
\end{tabular}
\quad \quad
\begin{tabular}{|cccc ccc|}
\hline
1&	1\cellcolor[gray]{0.8}&	1&	1&	0\cellcolor[gray]{0.8}&	0&	0\\
0&	0\cellcolor[gray]{0.8}&	1&	1&	1\cellcolor[gray]{0.8}&	1&	0\\
1&	0&	0&	0&	1&	1&	1\\
1&	1&	1&	0&	0&	0&	1\\
0&	1&	1&	1&	1&	0&	0\\
0&	0&	0&	1&	1&	1&	1\\
1&	1&	0&	0&	0&	1&	1\\
\hline
\multicolumn{7}{c}{$A_2$} \\
\end{tabular}
\quad \quad
\begin{tabular}{|cccc ccc|}
\hline
1&	1\cellcolor[gray]{0.8}&	1&	1&	0\cellcolor[gray]{0.8}&	0&	0\\
0&	0\cellcolor[gray]{0.8}&	0&	1&	1\cellcolor[gray]{0.8}&	1&	1\\
1&	1&	1&	0&	0&	0&	1\\
0&	0&	1&	1&	1&	1&	0\\
1&	1&	0&	0&	0&	1&	1\\
0&	1&	1&	1&	1&	0&	0\\
1&	0&	0&	0&	1&	1&	1\\
\hline
\multicolumn{7}{c}{$A_3$} \\
\end{tabular}

\vspace{0.5cm}
\begin{tabular}{|cccc ccc|}
\hline
1&	1\cellcolor[gray]{0.8}&	1&	1&	0\cellcolor[gray]{0.8}&	0&	0\\
1&	0\cellcolor[gray]{0.8}&	0&	0&	1\cellcolor[gray]{0.8}&	1&	1\\
0&	1&	1&	1&	1&	0&	0\\
1&	1&	0&	0&	0&	1&	1\\
0&	0&	1&	1&	1&	1&	0\\
1&	1&	1&	0&	0&	0&	1\\
0&	0&	0&	1&	1&	1&	1\\
\hline
\multicolumn{7}{c}{$A_4$} \\
\end{tabular}
\quad \quad
\begin{tabular}{|cccc ccc|}
\hline
1&	1\cellcolor[gray]{0.8}&	1&	1&	0\cellcolor[gray]{0.8}&	0&	0\\
1&	1\cellcolor[gray]{0.8}&	0&	0&	0\cellcolor[gray]{0.8}&	1&	1\\
0&	0&	0&	1&	1&	1&	1\\
0&	1&	1&	1&	1&	0&	0\\
1&	1&	1&	0&	0&	0&	1\\
1&	0&	0&	0&	1&	1&	1\\
0&	0&	1&	1&	1&	1&	0\\
\hline
\multicolumn{7}{c}{$A_5$} \\
\end{tabular}
\quad \quad
\begin{tabular}{|cccc ccc|}
\hline
1&	1\cellcolor[gray]{0.8}&	1&	1&	0\cellcolor[gray]{0.8}&	0&	0\\
1&	1\cellcolor[gray]{0.8}&	1&	0&	0\cellcolor[gray]{0.8}&	0&	1\\
1&	1&	0&	0&	0&	1&	1\\
1&	0&	0&	0&	1&	1&	1\\
0&	0&	0&	1&	1&	1&	1\\
0&	0&	1&	1&	1&	1&	0\\
0&	1&	1&	1&	1&	0&	0\\\hline
\multicolumn{7}{c}{$A_6$} \\
\end{tabular}
\end{small}
\caption{Set of $A_\alpha$ to construct $6$--MOFS(14)}
\label{tbl:A_alpha_6MOFS(14)}
\end{table}

Now consider the set of coincident arrays $\{s_{(\alpha, 1)}: \alpha \in \Omega\}$ described in (\ref{eq:intercalates}), shown as highlighted cells  in Table \ref{tbl:A_alpha_6MOFS(14)}. 
Then $A_1$ contains $\bigl( \begin{smallmatrix} 1 & 0 \\ 1 & 1 \end{smallmatrix} \bigr)$, $A_2, \dots, A_4$ contain $\bigl( \begin{smallmatrix} 1 & 0 \\ 0 & 1 \end{smallmatrix} \bigr)$ and the rest of $A_\alpha$ contain $\bigl( \begin{smallmatrix} 1 & 0 \\ 1 & 0 \end{smallmatrix} \bigr)$ at this position.
We replace $s_{(\alpha, 1)} = \bigl( \begin{smallmatrix} 1 & 0 \\ 0 & 1 \end{smallmatrix} \bigr)$ with $\overline{s}_{(\alpha, 1)} = \bigl( \begin{smallmatrix} 0 & 1 \\ 1 & 0 \end{smallmatrix} \bigr)$ in $A_\alpha $ for $\alpha \in \{2, 3, 4\}$.
Similarly, by replacing $s_{(h+k, h)}$  with $\overline{s}_{(h+k, h)}$ in $A_{h+k}$ for $h \in \{ 2,3\}$ and $ k \in K$, we get the set  $\{ A_\alpha^\ast : \alpha \in \Omega \}$. The first two rows of $A_\alpha^\ast, \alpha \in \Omega$ are shown in Table \ref{tbl:A_alpha_star_6MOFS(14)}. 

\begin{table}[H]
\centering
\begin{small}
\begin{tabular}{|cccc ccc|}
\hline
1&	1&	1&	1&	0&	0&	0\\
0&	1&	1&	1&	1&	0&	0\\\hline
\multicolumn{7}{c}{$A_1^\ast$} \\
\end{tabular}
\quad \quad
\begin{tabular}{|cccc ccc|}
\hline
1&	0&	1&	1&	1&	0&	0\\
0&	1&	1&	1&	0&	1&	0\\
\hline
\multicolumn{7}{c}{$A_2^\ast$} \\
\end{tabular}
\quad \quad
\begin{tabular}{|cccc ccc|}
\hline
1&	0&	0&	1&	1&	1&	0\\
0&	1&	1&	1&	0&	0&	1\\
\hline
\multicolumn{7}{c}{$A_3^\ast$} \\
\end{tabular}

\vspace{0.5cm}
\begin{tabular}{|cccc ccc|}
\hline
1&	0&	0&	0\cellcolor[gray]{0.8}&	1&	1&	1\cellcolor[gray]{0.8}\\
1&	1&	1&	1\cellcolor[gray]{0.8}&	0&	0&	0\cellcolor[gray]{0.8}\\
\hline
\multicolumn{7}{c}{$A_4^\ast$} \\
\end{tabular}
\quad \quad
\begin{tabular}{|cccc ccc|}
\hline
1&	1&	0&	0\cellcolor[gray]{0.8}&	0&	1&	1\cellcolor[gray]{0.8}\\
1&	1&	1&	1\cellcolor[gray]{0.8}&	0&	0&	0\cellcolor[gray]{0.8}\\
\hline
\multicolumn{7}{c}{$A_5^\ast$} \\
\end{tabular}
\quad \quad
\begin{tabular}{|cccc ccc|}
\hline
1&	1&	1&	0\cellcolor[gray]{0.8}&	0&	0&	1\cellcolor[gray]{0.8}\\
1&	1&	1&	1\cellcolor[gray]{0.8}&	0&	0&	0\cellcolor[gray]{0.8}\\\hline
\multicolumn{7}{c}{$A_6^\ast$} \\
\end{tabular}
\end{small}
\caption{First two rows of $A_\alpha^\ast$ to construct $6$--MOFS(14)}
\label{tbl:A_alpha_star_6MOFS(14)}
\end{table}

Now to obtain the set $\{A_\alpha' : \alpha \in \Omega \}$, we repeat the same procedure as above only this time we do not replace $s_{(h+k, h)}$ with $\overline{s}_{(h+k, h)}$ when $h = (p-1)/2$. The first two rows of  $A_\alpha ' $ are shown in Table \ref{tbl:A_alpha_prime_6MOFS(14)}. Observe the difference between $A_{\alpha}^\ast$ and $A_{\alpha}'$ for $\alpha \in \{4,5,6\}$ in the highlighted cells of Table \ref{tbl:A_alpha_star_6MOFS(14)} and Table \ref{tbl:A_alpha_prime_6MOFS(14)}.

\begin{table}[H]
\centering
\begin{small}
\begin{tabular}{|cccc ccc|}
\hline
1&	1&	1&	1&	0&	0&	0\\
0&	1&	1&	1&	1&	0&	0\\\hline
\multicolumn{7}{c}{$A_1'$} \\
\end{tabular}
\quad \quad
\begin{tabular}{|cccc ccc|}
\hline
1&	0&	1&	1&	1&	0&	0\\
0&	1&	1&	1&	0&	1&	0\\
\hline
\multicolumn{7}{c}{$A_2'$} \\
\end{tabular}
\quad \quad
\begin{tabular}{|cccc ccc|}
\hline
1&	0&	0&	1&	1&	1&	0\\
0&	1&	1&	1&	0&	0&	1\\
\hline
\multicolumn{7}{c}{$A_3'$} \\
\end{tabular}

\vspace{0.5cm}
\begin{tabular}{|cccc ccc|}
\hline
1&	0&	0&	1\cellcolor[gray]{0.8}&	1&	1&	0\cellcolor[gray]{0.8}\\
1&	1&	1&	0\cellcolor[gray]{0.8}&	0&	0&	1\cellcolor[gray]{0.8}\\
\hline
\multicolumn{7}{c}{$A_4'$} \\
\end{tabular}
\quad \quad
\begin{tabular}{|cccc ccc|}
\hline
1&	1&	0&	1\cellcolor[gray]{0.8}&	0&	1&	0\cellcolor[gray]{0.8}\\
1&	1&	1&	0\cellcolor[gray]{0.8}&	0&	0&	1\cellcolor[gray]{0.8}\\
\hline
\multicolumn{7}{c}{$A_5'$} \\
\end{tabular}
\quad \quad
\begin{tabular}{|cccc ccc|}
\hline
1&	1&	1&	1\cellcolor[gray]{0.8}&	0&	0&	0\cellcolor[gray]{0.8}\\
1&	1&	1&	0\cellcolor[gray]{0.8}&	0&	0&	1\cellcolor[gray]{0.8}\\\hline
\multicolumn{7}{c}{$A_6'$} \\
\end{tabular}
\end{small}
\caption{First two rows of $A_\alpha'$ to construct $6$--MOFS(14)}
\label{tbl:A_alpha_prime_6MOFS(14)}
\end{table}

Now consider the permutation $\rho$ on the set $\Omega$. 
$$
\rho = \left( 
\begin{array}{ccc ccc} 
1 & 2 & 3 & 4 & 5 & 6 \\
5 & 6 & 1 & 4 & 2 & 3 \\
\end{array}
\right)
$$
Thus we get a set $\{F_\alpha : \alpha \in \Omega\}$ of $6$ binary MOFS of order 14, where $F_\alpha$ is of the form:
$$ 
F_\alpha = \left[
\renewcommand{\arraystretch}{1.2}
\begin{array}{c|c}
\hspace{0.3cm} A_\alpha^\ast \hspace{0.4cm} \ & \overline{A}_{\alpha} \\ \hline
\overline{A}_{\alpha} & A_{\rho^{-1}(\alpha)}' \\
\end{array} \right]
$$
A complete description of each frequency square $F_\alpha$ is given in Appendix \ref{sec:appendix_10_MOFS}.

\end{example}

\bibliographystyle{abbrv}
\bibliography{References21}

\appendix

\newpage

\section{6 binary MOFS(14)} \label{sec:appendix_10_MOFS}

\begin{table}[H]
    \centering
    \scriptsize
\renewcommand{\arraystretch}{1.25}
\begin{tabular}{|cccc ccc|cccc ccc|}
\hline
1&	1&	1&	1&	0&	0&	0&	0&	0&	0&	0&	1&	1&	1\\
0&	1&	1&	1&	1&	0&	0&	1&	0&	0&	0&	0&	1&	1\\
0&	0&	1&	1&	1&	1&	0&	1&	1&	0&	0&	0&	0&	1\\
0&	0&	0&	1&	1&	1&	1&	1&	1&	1&	0&	0&	0&	0\\
1&	0&	0&	0&	1&	1&	1&	0&	1&	1&	1&	0&	0&	0\\
1&	1&	0&	0&	0&	1&	1&	0&	0&	1&	1&	1&	0&	0\\
1&	1&	1&	0&	0&	0&	1&	0&	0&	0&	1&	1&	1&	0\\\hline
0&	0&	0&	0&	1&	1&	1&	1&	0&	0&	1&	1&	1&	0\\
1&	0&	0&	0&	0&	1&	1&	0&	1&	1&	1&	0&	0&	1\\
1&	1&	0&	0&	0&	0&	1&	1&	1&	1&	0&	0&	0&	1\\
1&	1&	1&	0&	0&	0&	0&	0&	0&	1&	1&	1&	1&	0\\
0&	1&	1&	1&	0&	0&	0&	1&	1&	0&	0&	0&	1&	1\\
0&	0&	1&	1&	1&	0&	0&	0&	1&	1&	1&	1&	0&	0\\
0&	0&	0&	1&	1&	1&	0&	1&	0&	0&	0&	1&	1&	1\\
\hline
\multicolumn{14}{c}{} \\
\multicolumn{14}{c}{\Large $F_1$} \\
\end{tabular}
\quad \quad
\begin{tabular}{|cccc ccc|cccc ccc|}
\hline
1&	0&	1&	1&	1&	0&	0&	0&	0&	0&	0&	1&	1&	1\\
0&	1&	1&	1&	0&	1&	0&	1&	1&	0&	0&	0&	0&	1\\
1&	0&	0&	0&	1&	1&	1&	0&	1&	1&	1&	0&	0&	0\\
1&	1&	1&	0&	0&	0&	1&	0&	0&	0&	1&	1&	1&	0\\
0&	1&	1&	1&	1&	0&	0&	1&	0&	0&	0&	0&	1&	1\\
0&	0&	0&	1&	1&	1&	1&	1&	1&	1&	0&	0&	0&	0\\
1&	1&	0&	0&	0&	1&	1&	0&	0&	1&	1&	1&	0&	0\\\hline
0&	0&	0&	0&	1&	1&	1&	1&	1&	0&	1&	0&	1&	0\\
1&	1&	0&	0&	0&	0&	1&	1&	1&	1&	0&	0&	0&	1\\
0&	1&	1&	1&	0&	0&	0&	0&	0&	0&	1&	1&	1&	1\\
0&	0&	0&	1&	1&	1&	0&	0&	1&	1&	1&	1&	0&	0\\
1&	0&	0&	0&	0&	1&	1&	1&	1&	1&	0&	0&	0&	1\\
1&	1&	1&	0&	0&	0&	0&	1&	0&	0&	0&	1&	1&	1\\
0&	0&	1&	1&	1&	0&	0&	0&	0&	1&	1&	1&	1&	0\\
\hline
\multicolumn{14}{c}{} \\
\multicolumn{14}{c}{\Large $F_2$} \\
\end{tabular}

\vspace{0.7cm}

\begin{tabular}{|cccc ccc|cccc ccc|}
\hline
1&	0&	0&	1&	1&	1&	0&	0&	0&	0&	0&	1&	1&	1\\
0&	1&	1&	1&	0&	0&	1&	1&	1&	1&	0&	0&	0&	0\\
1&	1&	1&	0&	0&	0&	1&	0&	0&	0&	1&	1&	1&	0\\
0&	0&	1&	1&	1&	1&	0&	1&	1&	0&	0&	0&	0&	1\\
1&	1&	0&	0&	0&	1&	1&	0&	0&	1&	1&	1&	0&	0\\
0&	1&	1&	1&	1&	0&	0&	1&	0&	0&	0&	0&	1&	1\\
1&	0&	0&	0&	1&	1&	1&	0&	1&	1&	1&	0&	0&	0\\\hline
0&	0&	0&	0&	1&	1&	1&	1&	1&	1&	1&	0&	0&	0\\
1&	1&	1&	0&	0&	0&	0&	1&	1&	1&	0&	0&	0&	1\\
0&	0&	0&	1&	1&	1&	0&	1&	1&	0&	0&	0&	1&	1\\
1&	1&	0&	0&	0&	0&	1&	1&	0&	0&	0&	1&	1&	1\\
0&	0&	1&	1&	1&	0&	0&	0&	0&	0&	1&	1&	1&	1\\
1&	0&	0&	0&	0&	1&	1&	0&	0&	1&	1&	1&	1&	0\\
0&	1&	1&	1&	0&	0&	0&	0&	1&	1&	1&	1&	0&	0\\
\hline
\multicolumn{14}{c}{} \\
\multicolumn{14}{c}{\Large $F_3$} \\
\end{tabular}
\quad \quad
\begin{tabular}{|cccc ccc|cccc ccc|}
\hline
1&	0&	0&	0&	1&	1&	1&	0&	0&	0&	0&	1&	1&	1\\
1&	1&	1&	1&	0&	0&	0&	0&	1&	1&	1&	0&	0&	0\\
0&	1&	1&	1&	1&	0&	0&	1&	0&	0&	0&	0&	1&	1\\
1&	1&	0&	0&	0&	1&	1&	0&	0&	1&	1&	1&	0&	0\\
0&	0&	1&	1&	1&	1&	0&	1&	1&	0&	0&	0&	0&	1\\
1&	1&	1&	0&	0&	0&	1&	0&	0&	0&	1&	1&	1&	0\\
0&	0&	0&	1&	1&	1&	1&	1&	1&	1&	0&	0&	0&	0\\\hline
0&	0&	0&	0&	1&	1&	1&	1&	0&	0&	1&	1&	1&	0\\
0&	1&	1&	1&	0&	0&	0&	1&	1&	1&	0&	0&	0&	1\\
1&	0&	0&	0&	0&	1&	1&	0&	1&	1&	1&	1&	0&	0\\
0&	0&	1&	1&	1&	0&	0&	1&	1&	0&	0&	0&	1&	1\\
1&	1&	0&	0&	0&	0&	1&	0&	0&	1&	1&	1&	1&	0\\
0&	0&	0&	1&	1&	1&	0&	1&	1&	1&	0&	0&	0&	1\\
1&	1&	1&	0&	0&	0&	0&	0&	0&	0&	1&	1&	1&	1\\
\hline
\multicolumn{14}{c}{} \\
\multicolumn{14}{c}{\Large $F_4$} \\
\end{tabular}

\vspace{0.7cm}

\begin{tabular}{|cccc ccc|cccc ccc|}
\hline
1&	1&	0&	0&	0&	1&	1&	0&	0&	0&	0&	1&	1&	1\\
1&	1&	1&	1&	0&	0&	0&	0&	0&	1&	1&	1&	0&	0\\
0&	0&	0&	1&	1&	1&	1&	1&	1&	1&	0&	0&	0&	0\\
0&	1&	1&	1&	1&	0&	0&	1&	0&	0&	0&	0&	1&	1\\
1&	1&	1&	0&	0&	0&	1&	0&	0&	0&	1&	1&	1&	0\\
1&	0&	0&	0&	1&	1&	1&	0&	1&	1&	1&	0&	0&	0\\
0&	0&	1&	1&	1&	1&	0&	1&	1&	0&	0&	0&	0&	1\\\hline
0&	0&	0&	0&	1&	1&	1&	1&	1&	1&	1&	0&	0&	0\\
0&	0&	1&	1&	1&	0&	0&	0&	1&	1&	1&	1&	0&	0\\
1&	1&	1&	0&	0&	0&	0&	0&	0&	1&	1&	1&	1&	0\\
1&	0&	0&	0&	0&	1&	1&	0&	0&	0&	1&	1&	1&	1\\
0&	0&	0&	1&	1&	1&	0&	1&	0&	0&	0&	1&	1&	1\\
0&	1&	1&	1&	0&	0&	0&	1&	1&	0&	0&	0&	1&	1\\
1&	1&	0&	0&	0&	0&	1&	1&	1&	1&	0&	0&	0&	1\\
\hline
\multicolumn{14}{c}{} \\
\multicolumn{14}{c}{\Large $F_5$} \\
\end{tabular}
\quad \quad
\begin{tabular}{|cccc ccc|cccc ccc|}
\hline
1&	1&	1&	0&	0&	0&	1&	0&	0&	0&	0&	1&	1&	1\\
1&	1&	1&	1&	0&	0&	0&	0&	0&	0&	1&	1&	1&	0\\
1&	1&	0&	0&	0&	1&	1&	0&	0&	1&	1&	1&	0&	0\\
1&	0&	0&	0&	1&	1&	1&	0&	1&	1&	1&	0&	0&	0\\
0&	0&	0&	1&	1&	1&	1&	1&	1&	1&	0&	0&	0&	0\\
0&	0&	1&	1&	1&	1&	0&	1&	1&	0&	0&	0&	0&	1\\
0&	1&	1&	1&	1&	0&	0&	1&	0&	0&	0&	0&	1&	1\\\hline
0&	0&	0&	0&	1&	1&	1&	1&	0&	1&	1&	1&	0&	0\\
0&	0&	0&	1&	1&	1&	0&	0&	1&	1&	1&	0&	1&	0\\
0&	0&	1&	1&	1&	0&	0&	1&	0&	0&	0&	1&	1&	1\\
0&	1&	1&	1&	0&	0&	0&	1&	1&	1&	0&	0&	0&	1\\
1&	1&	1&	0&	0&	0&	0&	0&	1&	1&	1&	1&	0&	0\\
1&	1&	0&	0&	0&	0&	1&	0&	0&	0&	1&	1&	1&	1\\
1&	0&	0&	0&	0&	1&	1&	1&	1&	0&	0&	0&	1&	1\\
\hline
\multicolumn{14}{c}{} \\
\multicolumn{14}{c}{\Large $F_6$} \\
\end{tabular}
\caption{A set of 6 binary MOFS of order 14.}
\end{table}

\newpage
\section{Eigenvalues} \label{app:eigenvalues}
The eigenvalues of the matrix $M^TM$ in the proof of Theorem \ref{thm:upper_bound_rectangles} can be obtained from Lemma \ref{lem:eigenvalues_N} by substituting $c = n\lambda$ and $d = \lambda \lambda'$, where $\lambda  = m/q$ and $\lambda' = n/q$.
\begin{lemma} \label{lem:eigenvalues}
Let $H = aI_q + bJ_q$ be a $q \times q$ matrix, where $I_q$ is an identity matrix of order $q$ and $J_q$ is a matrix of ones of order $q$. Then $a + bq$ and $a$ are eigenvalues of $H$ with multiplicities 1 and $q-1$ respectively.
\end{lemma}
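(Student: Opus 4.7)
The statement is a standard linear-algebra fact about matrices of the form $aI_q + bJ_q$, so the plan is straightforward: exhibit explicit eigenvectors and verify that together they span all of $\mathbb{R}^q$.

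The cleanest approach is to first treat $J_q$ alone and then use the fact that $H$ and $J_q$ share an eigenbasis. Concretely, let $\mathbf{1} = (1,1,\dots,1)^T \in \mathbb{R}^q$. A direct computation gives $J_q \mathbf{1} = q\mathbf{1}$, hence
\[
H \mathbf{1} = (aI_q + bJ_q)\mathbf{1} = a\mathbf{1} + bq\mathbf{1} = (a+bq)\mathbf{1},
\]
so $a+bq$ is an eigenvalue with eigenvector $\mathbf{1}$. Next, for any vector $\mathbf{v} = (v_1,\dots,v_q)^T$ satisfying $\sum_{i=1}^{q} v_i = 0$, each entry of $J_q \mathbf{v}$ equals $\sum_i v_i = 0$, so $J_q \mathbf{v} = \mathbf{0}$, and therefore
\[
H \mathbf{v} = a\mathbf{v} + b J_q \mathbf{v} = a\mathbf{v}.
\]
Thus every vector in the hyperplane $W = \{\mathbf{v} \in \mathbb{R}^q : \sum_i v_i = 0\}$ is an eigenvector of $H$ with eigenvalue $a$.

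To finish, I would observe that $W$ has dimension $q-1$, and $\mathbf{1}\notin W$ (since its entries sum to $q \neq 0$), so $\mathrm{span}(\mathbf{1}) \oplus W = \mathbb{R}^q$. This decomposition yields a basis of $\mathbb{R}^q$ consisting of eigenvectors of $H$, and the algebraic multiplicities of $a+bq$ and $a$ must be at least $1$ and $q-1$ respectively. Since the sum of multiplicities already equals $q$, these lower bounds are tight and account for all eigenvalues. There is no real obstacle here; the only minor care needed is to note that $a+bq$ and $a$ may coincide (precisely when $b=0$), but this does not affect the stated multiplicities as a multiset since in that degenerate case $H=aI_q$ trivially has $a$ as a $q$-fold eigenvalue, consistent with ``$1 + (q-1) = q$'' copies.
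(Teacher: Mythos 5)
Your proof is correct and complete, but it takes a different route from the paper's. The paper argues by reducing $H$ to a lower-triangular matrix (adding all rows to the first, then subtracting the first column from the others, in effect) and reading the eigenvalues off the diagonal; as literally written, that argument needs care, since row operations alone do not preserve eigenvalues --- it is really a disguised computation of $\det(H-\lambda I)$ or a similarity transformation. You instead exhibit an explicit eigenbasis: the all-ones vector $\mathbf{1}$ for the eigenvalue $a+bq$, and the $(q-1)$-dimensional hyperplane $W=\{\mathbf{v}:\sum_i v_i=0\}$ for the eigenvalue $a$, then note that $\mathrm{span}(\mathbf{1})\oplus W=\mathbb{R}^q$ so these account for all multiplicities. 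Your version is more transparent and self-justifying (it also hands you the eigenvectors, and your remark about the degenerate case $b=0$ is a nice touch), at the cost of being slightly longer; the paper's version is terser and generalizes more directly to the block computation in its Lemma~\ref{lem:eigenvalues_N}, which is presumably why the authors chose it.
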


\begin{proof}
Observe that the matrix,
$$ H = 
\begin{pmatrix}
    a+b & b & \dots & b \\
    b & a+b & \dots & b \\
    \vdots & \vdots & \ddots & \vdots \\
    b & b & \dots & a+b \\
\end{pmatrix}
$$
can be reduced to the following lower triangular matrix:
$$ 
\begin{pmatrix}
    a+qb & 0 &\hspace{0.3cm}0& \dots & 0 \\
    b & a &\hspace{0.3cm}0& \dots & 0 \\
    b & 0 &\hspace{0.3cm}a& \dots & 0 \\
    \vdots & \vdots&\hspace{0.3cm} \vdots & \ddots & \vdots \\
    b & 0 &\hspace{0.3cm}0& \dots & a \\
\end{pmatrix}
$$
Thus the eigenvalues $a+qb$ and $a$ have multiplicities 1 and $q-1$, respectively. 
\end{proof}

\begin{lemma} \label{lem:eigenvalues_N}
Let $N$ be a $(kq) \times (kq)$ matrix of the following form:    
$$
\renewcommand{\arraystretch}{1.25}
N = \begin{pmatrix}
c I_q  &  d J_q  &  \ldots  & d J_q \\
d J_q &  c I_q   &  \ldots  &  d J_q  \\
\vdots        &    \vdots       &  \ddots  &   \vdots        \\
d J_q &  d J_q  &  \ldots  &  c I_q  \\
\end{pmatrix},
$$
where $I_q$ and $J_q$ are defined in Lemma \ref{lem:eigenvalues}. Then $N$ has eigenvalues $c + q(k-1)d$, $ c- qd,$ and $c$ with multiplicities 1, $k-1,$ and $k(q-1)$ respectively.
\end{lemma}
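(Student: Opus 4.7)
The plan is to diagonalize $N$ explicitly by writing eigenvectors in Kronecker-product form $u\otimes v$ with $u\in\mathbb{R}^k$ and $v\in\mathbb{R}^q$, and splitting into two natural families that together exhaust $\mathbb{R}^{kq}$. I would reduce the second family to an application of \lref{lem:eigenvalues}.

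First, for any $i\in\{1,\dots,k\}$ and any $v\in\mathbb{R}^q$ with $\mathbf{1}_q^{T}v=0$ (equivalently $J_q v=0$), I would verify block-by-block that
\[
N(e_i\otimes v)=c\,(e_i\otimes v),
\]
because in the $i$-th block the contribution is $cI_q v=cv$ while in every other block it is $dJ_q v=0$. Since $\{v:\mathbf{1}_q^{T}v=0\}$ has dimension $q-1$ and $i$ ranges over $k$ values, this produces $k(q-1)$ linearly independent eigenvectors, all with eigenvalue $c$.

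Second, for any $u\in\mathbb{R}^k$, a short block computation using $J_q\mathbf{1}_q=q\mathbf{1}_q$ gives
\[
N(u\otimes\mathbf{1}_q)=\bigl(((c-dq)I_k+dq J_k)u\bigr)\otimes\mathbf{1}_q.
\]
Thus $N$ preserves the $k$-dimensional subspace $\{u\otimes\mathbf{1}_q:u\in\mathbb{R}^k\}$, and its action there is the $k\times k$ matrix $(c-dq)I_k+dqJ_k$. Applying \lref{lem:eigenvalues} with $a=c-dq$ and $b=dq$, this matrix has eigenvalue $a+bk=c+dq(k-1)$ with multiplicity $1$ and eigenvalue $a=c-dq$ with multiplicity $k-1$, and lifting these back via $u\mapsto u\otimes\mathbf{1}_q$ yields the remaining eigenvectors of $N$.

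Finally, I would check that the two families are linearly independent: family~1 consists of vectors whose every length-$q$ block sums to zero, while family~2 consists of vectors whose every block is a multiple of $\mathbf{1}_q$, so the intersection of their spans is trivial. The total count $k(q-1)+1+(k-1)=kq$ exhausts $\mathbb{R}^{kq}$, giving exactly the claimed eigenvalues with the stated multiplicities. I do not expect any real obstacle; the only bookkeeping point is the linear-independence check, which is routine.
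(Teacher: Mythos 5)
Your proof is correct, but it takes a genuinely different route from the paper's. You build an explicit eigenbasis of $\mathbb{R}^{kq}$ via the tensor decomposition $\mathbb{R}^{kq}=\mathbb{R}^k\otimes\mathbb{R}^q$, splitting $\mathbb{R}^q$ into $\ker(J_q)$ and $\langle\mathbf{1}_q\rangle$, and you invoke \lref{lem:eigenvalues} only for the $k\times k$ matrix $(c-dq)I_k+dqJ_k$ governing the action on the second subspace. The paper instead performs block column and row operations to bring $N$ to a block lower-triangular matrix with diagonal blocks $cI_q+(k-1)dJ_q$ and $cI_q-dJ_q$ (each repeated $k-1$ times for the latter), and then applies \lref{lem:eigenvalues} to those $q\times q$ blocks. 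Your approach has two advantages: it produces the eigenvectors explicitly, and it is self-evidently eigenvalue-preserving, whereas the paper's reduction is only legitimate because its column operation (subtracting block column $1$ from the others) and its row operation (adding block rows $2,\dots,k$ to the first) happen to be given by mutually inverse matrices, so that the composite is a similarity transformation --- a point the paper leaves implicit, since elementary row operations alone do not preserve eigenvalues. The paper's computation is shorter on the page; yours requires the routine linear-independence check at the end, which you handle correctly by observing that the two families live in complementary subspaces. Both arguments are sound and both ultimately rest on \lref{lem:eigenvalues}, applied to matrices of different sizes.
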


\begin{proof}
By row operations we can row-reduce $N$ to the following matrices (where ${\bf 0}$ is a $q \times q$ matrix of zeroes): 
$$
\renewcommand{\arraystretch}{1.5}
 \begin{pmatrix}
c I_q  &  d J_q - c I_q &  d J_q - c I_q & \ldots  & d J_q - c I_q\\
d J_q &  c I_q - d J_q  & {\bf 0}  &  \ldots  &  {\bf 0} \\
d J_q &  {\bf 0}   & c I_q - d J_q &  \ldots  &  {\bf 0} \\
\vdots        &    \vdots       &  \vdots  &   \ddots & \vdots        \\
d J_q &   {\bf 0}   & {\bf 0}  &  \ldots  &  c I_q - d J_q\\
\end{pmatrix},
$$

$$
\renewcommand{\arraystretch}{1.5}
\begin{pmatrix}
c I_q + (k-1) d J_q  &  {\bf 0}  &  {\bf 0}  & \ldots  & {\bf 0} \\
d J_q &  c I_q - d J_q  & {\bf 0}  &  \ldots  &  {\bf 0} \\
d J_q &  {\bf 0}   & c I_q - d J_q &  \ldots  &  {\bf 0} \\
\vdots        &    \vdots       &  \vdots  &   \ddots & \vdots        \\
d J_q &   {\bf 0}   & {\bf 0}  &  \ldots  &  c I_q - d J_q\\
\end{pmatrix}.
$$
Now by using Lemma \ref{lem:eigenvalues} the matrix $c I_q + (k-1) d J_q$ has eigenvalues $c+q(k-1)d$ and $c$ with multiplicities 1 and $q-1$, respectively. And the matrix $ c I_q - d J_q $ has eigenvalues $c - qd$ and $c$ with multiplicities 1 and $q-1$, respectively. Consequently the matrix $N$ has eigenvalues $c + q(k-1)d$, $ c- qd$, and $c$ with multiplicities 1, $k-1$, and $k(q-1)$ respectively.
\end{proof}

\end{document}